\documentclass[11pt]{amsart}
\usepackage{amsbsy}
\usepackage{amsmath}
\usepackage{amsfonts}
\usepackage{graphicx,epsfig,subfigure,psfrag}
\textwidth 16cm       
\textheight 21cm      
\oddsidemargin 0.35cm   
\evensidemargin 0.35cm  
\topmargin 0cm       
\pagestyle{plain} \setcounter{page}{1}
\usepackage{color}

\usepackage{changes}

\begin{document}

\newtheorem{theorem}{Theorem}[section]
\newtheorem{proposition}[theorem]{Proposition}
\newtheorem{lemma}[theorem]{Lemma}
\newtheorem{corollary}[theorem]{Corollary}
\newtheorem{definition}[theorem]{Definition}
\newtheorem{remark}[theorem]{Remark}
\newcommand{\tex}{\textstyle}
\numberwithin{equation}{section}
\newcommand{\ren}{\mathbb{R}^N}
\newcommand{\re}{\mathbb{R}}
\newcommand{\n}{\nabla}
\newcommand{\p}{\partial}
\newcommand{\iy}{\infty}
\newcommand{\pa}{\partial}
\newcommand{\fp}{\noindent}
\newcommand{\ms}{\medskip\vskip-.1cm}
\newcommand{\mpb}{\medskip}
\newcommand{\AAA}{{\bf A}}
\newcommand{\BB}{{\bf B}}
\newcommand{\CC}{{\bf C}}
\newcommand{\DD}{{\bf D}}
\newcommand{\EE}{{\bf E}}
\newcommand{\FF}{{\bf F}}
\newcommand{\GG}{{\bf G}}
\newcommand{\oo}{{\mathbf \omega}}
\newcommand{\Am}{{\bf A}_{2m}}
\newcommand{\CCC}{{\mathbf  C}}
\newcommand{\II}{{\mathrm{Im}}\,}
\newcommand{\RR}{{\mathrm{Re}}\,}
\newcommand{\eee}{{\mathrm  e}}
\newcommand{\LL}{L^2_\rho(\ren)}
\newcommand{\LLL}{L^2_{\rho^*}(\ren)}
\renewcommand{\a}{\alpha}
\renewcommand{\b}{\beta}
\newcommand{\g}{\gamma}
\newcommand{\G}{\Gamma}
\renewcommand{\d}{\delta}
\newcommand{\D}{\Delta}
\newcommand{\e}{\varepsilon}
\newcommand{\var}{\varphi}
\newcommand{\lll}{\l}
\renewcommand{\l}{\lambda}
\renewcommand{\o}{\omega}
\renewcommand{\O}{\Omega}
\newcommand{\s}{\sigma}
\renewcommand{\t}{\tau}
\renewcommand{\th}{\theta}
\newcommand{\z}{\zeta}
\newcommand{\wx}{\widetilde x}
\newcommand{\wt}{\widetilde t}
\newcommand{\noi}{\noindent}
\newcommand{\uu}{{\bf u}}
\newcommand{\xx}{{\bf x}}
\newcommand{\yy}{{\bf y}}
\newcommand{\zz}{{\bf z}}
\newcommand{\aaa}{{\bf a}}
\newcommand{\cc}{{\bf c}}
\newcommand{\jj}{{\bf j}}
\newcommand{\ggg}{{\bf g}}
\newcommand{\UU}{{\bf U}}
\newcommand{\YY}{{\bf Y}}
\newcommand{\HH}{{\bf H}}
\newcommand{\GGG}{{\bf G}}
\newcommand{\VV}{{\bf V}}
\newcommand{\ww}{{\bf w}}
\newcommand{\vv}{{\bf v}}
\newcommand{\hh}{{\bf h}}
\newcommand{\di}{{\rm div}\,}
\newcommand{\ii}{{\rm i}\,}
\def\I{{\mathbf{I}}}
\newcommand{\inA}{\quad \mbox{in} \quad \ren \times \re_+}
\newcommand{\inB}{\quad \mbox{in} \quad}
\newcommand{\inC}{\quad \mbox{in} \quad \re \times \re_+}
\newcommand{\inD}{\quad \mbox{in} \quad \re}
\newcommand{\forA}{\quad \mbox{for} \quad}
\newcommand{\whereA}{,\quad \mbox{where} \quad}
\newcommand{\asA}{\quad \mbox{as} \quad}
\newcommand{\andA}{\quad \mbox{and} \quad}
\newcommand{\withA}{,\quad \mbox{with} \quad}
\newcommand{\orA}{,\quad \mbox{or} \quad}
\newcommand{\atA}{\quad \mbox{at} \quad}
\newcommand{\onA}{\quad \mbox{on} \quad}
\newcommand{\ef}{\eqref}
\newcommand{\mc}{\mathcal}
\newcommand{\mf}{\mathfrak}

\newcommand{\ssk}{\smallskip}
\newcommand{\LongA}{\quad \Longrightarrow \quad}
\def\com#1{\fbox{\parbox{6in}{\texttt{#1}}}}
\def\N{{\mathbb N}}
\def\A{{\cal A}}
\newcommand{\de}{\,d}
\newcommand{\eps}{\varepsilon}
\newcommand{\be}{\begin{equation}}
\newcommand{\ee}{\end{equation}}
\newcommand{\spt}{{\mbox spt}}
\newcommand{\ind}{{\mbox ind}}
\newcommand{\supp}{{\mbox supp}}
\newcommand{\dip}{\displaystyle}
\newcommand{\prt}{\partial}
\renewcommand{\theequation}{\thesection.\arabic{equation}}
\renewcommand{\baselinestretch}{1.1}
\newcommand{\Dm}{(-\D)^m}

\title
{\bf A stationary population model with an interior interface-type boundary}

\author{Pablo \'Alvarez-Caudevilla and Cristina Br\"{a}ndle}

\address{Universidad Carlos III de Madrid,
Av. Universidad 30, 28911-Legan\'es, Spain} \email{pacaudev@math.uc3m.es}

\address{Universidad Carlos III de Madrid,
Av. Universidad 30, 28911-Legan\'es, Spain} \email{cbrandle@math.uc3m.es}

\keywords{Coupled systems, migration models, interchange of flux, spatial heterogeneities, interfaces}

\thanks{This paper has been partially supported by Ministry of Economy and Competitiveness of Spain under research project PID2019-106122GB-I00}

\subjclass{35J70, 35J47, 35K57}

\date{\today}


\begin{abstract}
We propose a stationary system that might be regarded as a migration model of some population abandoning their original place of abode and becoming part of another population,
once they reach the interface boundary. To do so, we show a model where each population follows a logistic 
equation in their own environment while assuming spatial heterogeneities.
Moreover, both populations are coupled through the common boundary, which acts as a permeable membrane on which their flow moves in and out.
The main goal we face in this work will be
to describe the precise interplay between the stationary solutions with respect to the parameters involved in the problem, in particular the growth rate of the populations and the coupling parameter
involved on the boundary where the interchange of flux is taking place.
\end{abstract}

\maketitle

\section{Introduction}
 \label{S1}

In this work we analyse the existence of stationary solutions of two different species living separately  in two regions of a heterogeneous environment
and having an interaction through a permeable membrane,  the common boundary of the two regions.

{To be more specific, }the problem we have in mind is a reaction-diffusion model with spatially heterogeneous coefficients that provides us with the stationary {non-negative} solutions for an evolution model
of two species, that live separately in two subdomains $\Omega_1$ and $\Omega_2$ of $\Omega\subset\mathbb{R}^N$, see Figure~\ref{fig:domain} and Section~\ref{sect:preliminaries} for a precise description of the domain. So that, if we denote the density of these populations
by $u_1, u_2$, we have that $u_i>0$ in $\Omega_i$ and $u_i=0$ in $\Omega\setminus\Omega_i$,  with $i=1,2$. The  model that describes their stationary behaviour is shown as
\begin{equation}
\label{main}
\left\{ \begin{array}{llll}
-\Delta u_1(x)=\lambda m_1(x) u_1(x)-a_1(x) u_1(x)^p,&\quad \text{ in } \Omega_1,\quad & 
 \\[6pt]
-\Delta u_2(x)= \lambda m_2(x) u_2(x)-a_2(x) u_2(x)^p,&\quad \text{ in } \Omega_2,
\quad & 
\end{array}\right.
\end{equation}
where $\lambda$ is a real parameter and $p>1$. The weights $m_i$ are assumed to be two positive $L^\infty(\Omega_i)$ functions, so that   $\lambda m_i(x)$
stands for the intrinsic growth rate of $u_i$ respectively.
The coefficients $a_i(x)$
 are two non-negative H\"{o}lder continuous functions in $\mathcal{C}^{0,\eta}(\overline{\Omega_i})$, for some $\eta\in (0,1]$, that  measure the crowding effects of
the populations in the corresponding subdomains of $\Omega_i$.
In order to take into account these effects, we define the following subsets, related to the positivity of the potentials $a_1$ and $a_2$,
\begin{equation}
  \label{eq:omega_a}
  \Omega_0^{a_1}:=\{x\in\Omega_1\;:\;a_1(x)=0\}\quad \text{and}\quad \Omega_0^{a_2}:=\{x\in\Omega_2\;:\;a_2(x)=0\}
\end{equation}
so that   $\overline \Omega_0^{a_1}\subset  \Omega_1$,   and $\overline \Omega_0^{a_2} \subset\Omega_2$, while $a_i$ remain positive in the rest of the subdomain $\Omega_i$.

\begin{figure}[ht!]
\includegraphics[width=8cm]{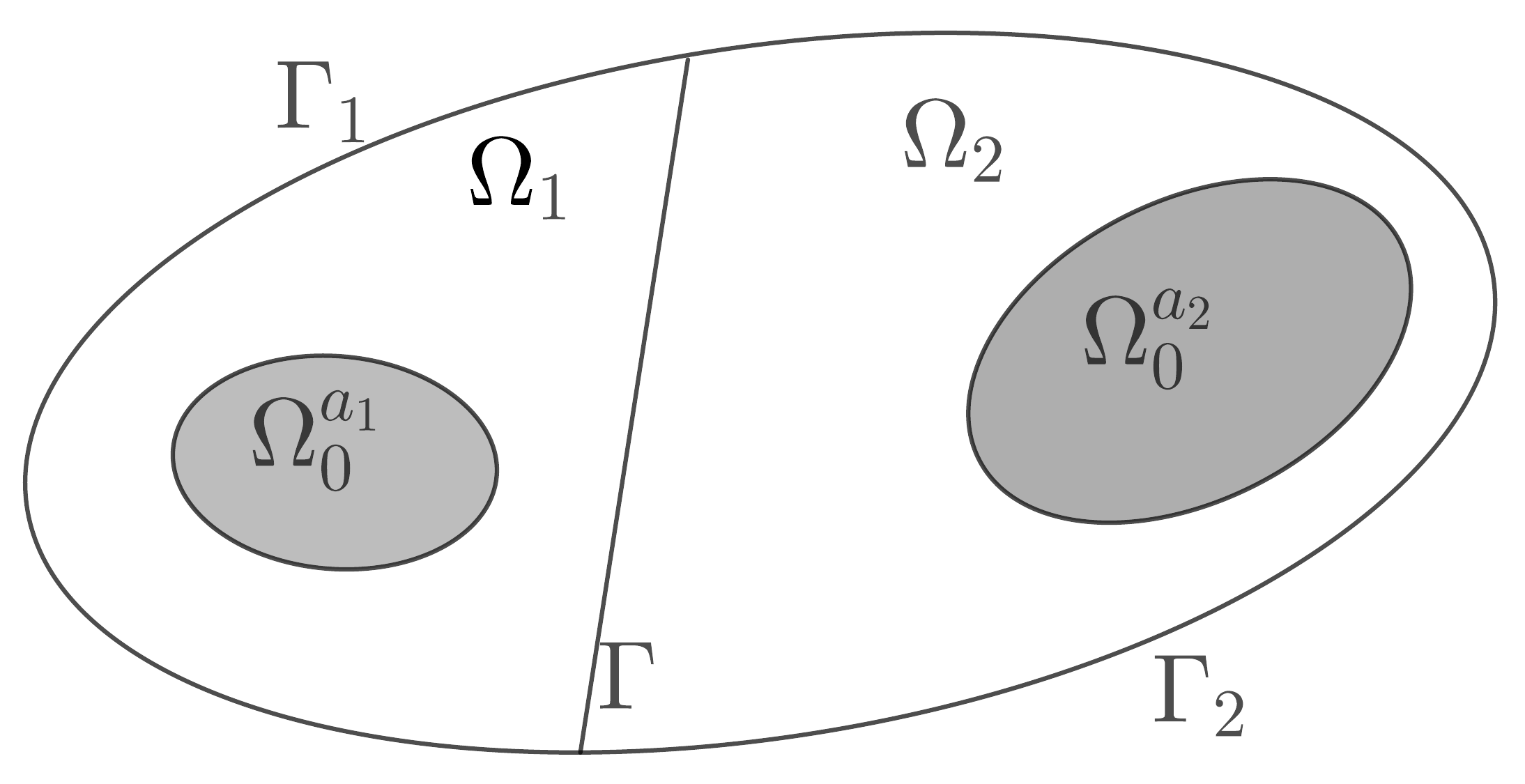}
\caption{A possible configuration of the domain.}
\label{fig:domain}
\end{figure}

The  two populations only interact through the interior boundary, also called interface or membrane 
$\Gamma$. Moreover, we will consider a hostile outside region. Hence, the boundary conditions read as follows
\begin{equation}
\label{opbound}
\left\{\begin{array}{ll} \dfrac{\p u_1}{\p {\bf n_1}}= \dfrac{\p u_2}{\p {\bf n_1}}= \mu(u_2- u_1),& \quad \text{on } \Gamma,\\[10pt]
u_i =0,  & \text{\quad on } \Gamma_i, \quad \text{with}\quad  i=1,2,\\
\end{array}\right.
\end{equation}
where ${\bf n_1}$ is the unitary outward normal vector of $\Gamma$, which points outward with respect to $\Omega_1$  and $\mu>0$ is a real parameter, the membrane permeability constant.
{Note that the intersections between both manifolds $\Gamma_i$ and $\Gamma$ satisfy both boundary conditions. Hence, at these points we have the continuity between both sides and
the set $\Gamma$ could be assumed to be closed if we include these intersection points, in case of having
the standard configuration shown in
Figure~\ref{fig:domain}.

The major novelty that we address in this paper stems from the boundary conditions. Indeed, besides the usual Dirichlet data on the boundary of $\Omega$, we also consider an interior interface condition. Our aim is to describe  the behaviour of the solutions for that interface problem.   The main results that we show, see Sections~\ref{sec:nondegenerate} and~\ref{sec:degenerate}, state the existence and behaviour of solutions to problem~\eqref{main}--\eqref{opbound} considering two different configurations for the potentials $a_i$.
First, we will deal with the non-degenerate case, in which $a_i$ are positive everywhere. We obtain that there exist positive solutions to~\eqref{main}--\eqref{opbound} 
if the parameter $\lambda$ is bigger than a certain  $\lambda^*$, while below that value $\lambda^*$ we just have the trivial solution everywhere.
The situation changes radically in the degenerate case, in which  we assume two vanishing domains, $\Omega_0^{a_i}$, inside the subdomains $\Omega_i$, where the coefficients $a_i$
are neglected. In this case, problem \eqref{main}--\eqref{opbound} has positive solutions if the parameter $\lambda$ is bounded between two precise constants $\lambda^*<\lambda<\lambda_\infty$. 
As in the previous case, if $\lambda<\lambda^*$ one just finds the trivial solution. If $\lambda\geq \lambda_\infty$ we get a blow-up behaviour in uniformly compact subsets of 
$\Omega_0^{a_i}$. In the complement of these blow-up regions we find a steady-state solution
that blows-up at the border of the regions. We notice that, even though the  behaviour we obtain here is similar to a logistic model with spatial heterogeneities and vanishing subdomains (cf.~\cite{AC-LG2}), our model has an important geometrical issue which
makes the results more constrained to some conditions.
These geometrical aspects give richer situations and allow us to study a novel  branch of problems that aim to model situations in which interior membranes appear, a circumstance which has been little studied from a
rigorous point of view.

Furthermore, many  problems that arise in biology, physics and/or medical science can be modelled by considering two subdomains with a common interior membrane that allows
permeation from one domain to the other one.  This interface boundary condition, also called the Kedem-Katchalsky interface condition,~\cite{KedemK}, was introduced in 1961 in a thermodynamic setting.
These conditions, describing the flow through the membrane, are compatible with mass
conservation lead to flux continuity, and energy dissipation, that gives us that the flux is proportional to the jump of the function through the
membrane with proportionality coefficient $\mu$; see~\cite{CiavolellaPaerthame} for further details.

From a biological point of view, the use of Kedem-Katchalsky interface conditions goes back to~\cite{Quarteroni}, where the authors studied
the dynamics of the solute in the vessel and in the arterial wall. Since then,  biological applications of membrane problems have increased and have been used  to describe phenomena such as tumour invasion, transport of molecules through the
cell/nucleus membrane, cell polarisation and cell division or  genetics, see~\cite{CiavolellaPaerthame} and references therein.

As far as the study of the mathematical properties is concerned, articles that include a geographical barrier are relatively few. We were only aware of a few works: In~\cite{WangSu} a semi-linear parabolic problem is considered. They investigate the effects of the barrier on the global
dynamics and on the existence, stability, and profile of spatially non-constant equilibria. A similar problem was previously considered in~\cite{Chen2001} and~\cite{Chen2006}.  His analysis was mainly in the framework
of the existence of weak solutions of parabolic and elliptic differential equations with barrier boundary conditions. The author establishes a new comparison principle, the global existence of solutions, and sufficient conditions of stability and instability of equilibria. He shows, in particular, that the stability of equilibrium changes as the barrier permeability changes through a critical value. Very recently, Ciavolella and Perthame~\cite{CiavolellaPaerthame} and~\cite{Ciavolella}  adapted the well-known $L^1$-theory for parabolic reaction-diffusion systems to the membrane boundary conditions case and proved several regularity results. Finally, in~\cite{Suarezetal} a
semi-linear elliptic interface problem is studied. The authors analyse the existence and uniqueness of positive solutions, under the assumption that  the interface condition in not symmetric, allowing different in and out fluxes of the domain. 
Although we assume symmetry on the interface our results are also valid for non-symmetric conditions. It is worth mentioning that all these references deal with problems that do not consider refuges (that is $\Omega_0^{a_i}$ are empty).

\noindent{\sc Organisation of the paper.} --- In Section~\ref{sect:preliminaries} we  go through the model we are dealing with and fix the notation that we are going to use throughout the paper. Section~\ref{subsect:eigenvalue_problem} is devoted to the study of two auxiliary linear problems that take into account the interface condition. In Section~\ref{Section_asym}  we describe the limit behaviour of a parameter dependent linear elliptic eigenvalue problem that will be crucial to establish the main results of the paper, when applying the method of sub and supersolutions for the existence of solutions. Finally, we state and prove the main results of the paper in Sections~\ref{sec:nondegenerate} and~\ref{sec:degenerate}. Section~\ref{sect:conclusions} is devoted to presenting  the conclusions of this paper and to establish some open problems.

\section{Preliminaries}
\label{sect:preliminaries}

The aim of this section is to comment on the special characteristics of the model. In particular, we describe it both from a biological and mathematical point of view. We also describe the notation that we are going to use. 

\subsection{The model}

The equations in~\eqref{main} arise occur in population dynamics as a stationary model of reaction-diffusion equations in which the individuals of a group live in a domain where neither competition or cooperation with other species exists. The model equation
$$
-\Delta u(x) =\lambda m(x) u(x)-a(x)u(x)^p
$$
assuming spatial heterogeneities was first mathematically analysed by Cantrell and Cosner in their seminal paper~\cite{CantrellCosner1991}, with $p=2$. Here the population grows following a logistic (when $a>0$) or Malthusian behaviour (in the regions where $a=0$), $\lambda m(x)$ stands for the intrinsic growth rate of the population, since the function $m(x)$ is going to be considered strictly positive, at any point $x$, and $\lambda$ will be a positive parameter. Moreover, $a(x)$ is the so-called intra-specific competition. Therefore, in the regions where $a=0$, the refuges or regions
 with unlimited resources, the population grows without limits.

Hence, the model we consider here could represent the migration or relation of  two groups of the previous population living in different regions of a common habitat, but having some interaction between them, through an interchange of flux on the common boundary, $\Gamma$, {that can be seen as a natural or geographical barrier}.
For the transmission condition
$$\dfrac{\p u_1}{\p {\bf n_1}}= \dfrac{\p u_2}{\p {\bf n_1}}= \mu(u_2- u_1)$$
the parameter $\mu$ measures  the strength of the membrane.
The flux of individuals remains continuous being proportional to the change of density across
the barrier by a rate $1/\mu$. Therefore,
the smaller $\mu$ is the stronger the barrier is.
And in fact, at the limit, when $\mu\to 0$, there is no transmission across $\Gamma$. The limit problem consists of two populations living separately in two different domains with homogenous mixed boundary conditions, Dirichlet on $\Gamma_i$ and Neumann on $\Gamma$, see~\cite{Santi} for properties of this type of problems.

If only one of the populations is zero on $\Gamma$, say $u_1$, then, again, the problem for $u_2$ becomes uncoupled, {with mixed homogenous boundary conditions, Robin on $\Gamma$ and Dirichlet on $\Gamma_2$. Clearly, we can understand  the problem that defines the behaviour of $u_1$ as a problem with non-homogeneous mixed boundary data in $\Omega_1$.} Moreover, this would also allow the existence of semi-trivial solutions of the type  $(u_1,0)^T$ having now $u_1$ homogeneous boundary conditions  on $\partial\Omega_1$.

\subsection{Basic notation and functional setting.}
\label{subsect:functional_setting}

Though one of the aims we have in mind consists of understanding this problem as a whole system, in vectorial logistic form in the whole domain $\Omega$,
depending on the results and proofs we are dealing with, sometimes it will be more convenient to work with a two equation point of view. For the sake of completeness, we think that it is worth writing this short subsection to state the notation we are going to use throughout the paper.

\noindent{\it Domains and geometry}: To fix the geometric structure of the problem let $\Omega\subset \mathbb{R}^N$, with $N\geq 2$
 be a bounded, connected domain.  We split $\Omega$ into two non-empty, regular sets, $\Omega_1$ and $\Omega_2$, such that
 $\overline{\Omega}=\overline{\Omega}_1\cup \overline{\Omega}_2$. We will denote by $\Gamma$ the  inside
 boundary, $\Gamma=\overline{\Omega}_1\cap \overline{\Omega}_2$
 and by $\Gamma_1:=\partial\Omega_1\setminus\Gamma$ and $\Gamma_2:=\partial\Omega_2\setminus\Gamma$ the  outside  boundaries.
 It is worth noting that in Figure~\ref{fig:domain} we are assuming that $\Gamma_1$ and $\Gamma_2$ are non-empty. However, it would be also possible to have, say $\Gamma_1=\emptyset$ and $\Gamma=\partial\Omega_1$ which in turn would mean $\Omega_1\subset\Omega_2$ (see~\cite{Pfluger1996} or~\cite{Suarezetal}). This configuration, though possible and easily applicable, is not considered here. 
 We also assume that these domains, as well as all the possible subsets appearing throughout the paper, are as regular as necessary; let us say with Lipschitz boundary, which again will play a role
in obtaining some of the results.

 We will use the subscript $1$, respectively $2$, to denote objects (functions, parameters...) defined in $\Omega_1$ (resp. $\Omega_2$) and each time we write the subscript $i$ we mean $i=1,2$ and we will not mention it anymore.

\noindent{\it Vectors}: In order to simplify notation, we will use capital letters to denote the vector corresponding to a pair of functions (typically solutions of a system), where the first entry is defined in $\Omega_1$, or defined equal to zero in $\Omega_2$, and the second one in $\Omega_2$. Thus,  for example, we will write $U$ to denote the vector $(u_1,u_2)^T$, where $u_i:\Omega_i\to\mathbb{R}$ and $U(x)$ will stand for $(u_1(x), 0)^T$ if $x\in\Omega_1$ and $(0,u_2(x))^T$ if $x\in\Omega_2$.
{By abuse of notation we will use  $\nabla U$ to denote  $(\nabla u_1,\nabla u_2)^T$}. Also, we will write $\Phi>0$ if $\phi_1>0 $ in $\Omega_1$ and $\phi_2>0$ in $\Omega_2$ and { $\Phi\succeq 0$ if we allow one of the components to be identically {zero} in its domain, while the other one is positive.}

\noindent{\it Matrices}: All the matrices that appear in this paper come from a two equation framework. Hence, they are $2\times 2$ and in most of the cases, diagonal.

We  will keep the letter $\mathbf{L}$ to denote the \lq\lq laplacian diagonal matrix,\rq\rq\,  $\mathbf{I}$ will stand for the $2\times 2$ identity matrix  and we will denote other matrices using capital boldface letters, for instance
\begin{equation*}
  \label{notation}
\mathbf{L}=  \left(\begin{array}{cc} -\D  & 0\\
     0 & -\D \end{array}\right),\qquad
\mathbf{F}=\left(\begin{array}{cc} f_1  & 0\\
     0 & f_2 \end{array}\right)
\end{equation*}
 where $f_i$ will be functions. We will denote $\mathbf{L}+\mathbf{F}$ as $\mathbf{L_F}$. In Section~\ref{sec:nondegenerate} we will deal with the inverse of the laplacian. With this aim, we will also define  the diagonal matrix
\begin{equation}
\label{inverse.lap}{ \mathbf{L}^{-1}=\left(\begin{array}{cc} (-\Delta)^{-1} & 0\\ 0 & (-\Delta)^{-1}\end{array}\right)}.
\end{equation}

 As before, we will write $\mathbf{C}>0$ (or $\geq$) to denote a matrix with positive (non-negative) entries and we will write $\mathbf{C}>\mathbf{D}$ if $\mathbf{C}-\mathbf{D}>0$ (respectively $<$ or $\leq$, $\geq$). Moreover, $\mathbf{C}^+$ will stand for the positive part of the matrix in the sense that we take the positive part of each entry (which are usually functions) in the matrix.

\noindent {\it Integrals}: To simplify notation and if no confusion arises, as is commonly occurs in the literature, we will avoid writing differentials on the integrals. It will be understood that integrals in $\Omega$ or $\Omega_i$ require $dx$ and integrals over boundaries,  for instance $\Gamma$ or $\partial\Omega_i$, have a surface differential such as $ds$.

\noindent{\it Functional spaces}: In order to deal with problem~\eqref{main}--\eqref{opbound} and the related ones, described in the  next sections, we have to fix our framework. For this reason we denote the space of continuous functions with the flux condition on $\Gamma$ as
$$
\mathcal{C}_\Gamma(\Omega):=\{\Psi\in \mathcal{C}(\Omega_1)\times \mathcal{C}(\Omega_2) : \dfrac{\p \psi_1}{\p {\bf n_1}}= \dfrac{\p \psi_2}{\p {\bf n_1}}= \mu ( \psi_2- \psi_1)\  \hbox{on}\  \Gamma\}.
$$
As for the continuous functions up to the boundary we will write
$$
\mathcal{C}_{\Gamma}(\overline\Omega):=\{\Psi\in \mathcal{C}(\overline\Omega_1)\times \mathcal{C}(\overline\Omega_2) : \dfrac{\p \psi_1}{\p {\bf n_1}}= \dfrac{\p \psi_2}{\p {\bf n_1}}= \mu ( \psi_2- \psi_1)\  \hbox{on}\  \Gamma\}.
$$
 We consider, following~\cite{Pfluger1996}, functions $\psi_i$ defined in $\Omega_i$ belonging to the spaces $\mathcal{C}_{\star}(\Omega_i)$ defined as the set of all  continuous functions with compact support in $\Omega=\Omega_1\cup\Omega_2$; that is $\psi_i$ is the restriction of a function of compact support in $\Omega$ to the subdomain $\Omega_i$ and we define, as before
$$
\mathcal{C}_{\star,\Gamma}(\Omega):=\{\Psi\in \mathcal{C}_\star(\Omega_1)\times \mathcal{C}_\star(\Omega_2) : \dfrac{\p \psi_1}{\p {\bf n_1}}= \dfrac{\p \psi_2}{\p {\bf n_1}}= \mu ( \psi_2- \psi_1)\  \hbox{on}\  \Gamma\}.
$$
Analogously we define the spaces of continuously differentiable and H\"{o}lder continuous functions $\mc{C}_{\Gamma}^{1}$, $\mc{C}_{\Gamma}^{0,\eta}$, $\mc{C}_{\Gamma}^{1,\eta}$, $\mathcal{C}^1_{\star,\Gamma}$, $\dots$
for some $\eta\in (0,1]$.

We will denote by $\mathcal{L}$ the product space $L^2(\Omega_1)\times L^2(\Omega_2)$, and consider its usual scalar product $\langle \Phi,\Psi\rangle_\mathcal{L}=\sum_i\langle \varphi_i,\psi_i\rangle_{L^2(\Omega_i)}$ and {the induced} norm $\|\Phi\|^2_{\mathcal{L}}= \langle \Phi,\Phi\rangle_{\mathcal{L}}=\sum_{i}\|\varphi_i\|_{L^2(\Omega_i)}^2$.
Following~\cite{CiavolellaPaerthame}, let $\mathcal{H}_{0}$ be the Hilbert space of functions $H_0^1(\Omega_1)\times H_0^1(\Omega_2)$, satisfying Dirichlet homogeneous conditions on $\Gamma_1$ and $\Gamma_2$. 
Then, we define $\mathcal{H}_{\Gamma}$ as
$$
 \mathcal{H}_{\Gamma}:=\{\Psi\in \mathcal{H}_0 :
\dfrac{\partial \psi_1}{\partial {\bf n_1}}= \dfrac{\partial \psi_2}{\partial {\bf n_1}}= \mu ( \psi_2- \psi_1)\  \hbox{on}\  \Gamma\},
$$
equipped with the norm
$$
\|\Psi\|^2_{\mathcal{H}_\Gamma}:=\|\nabla \Psi\|^2_{\mathcal{L}}=
\|\nabla\psi_1\|^2_{L^2(\Omega_1)}+\|\nabla\psi_2\|^2_{L^2(\Omega_2)}.
$$
Obviously, $\mathcal{H}_\Gamma$ is a Hilbert space and we define the scalar product in $\mathcal{H}_\Gamma$ as
$$
\left\langle \Phi,\Psi\right\rangle_{\mathcal{H}_\Gamma}:=\left\langle \nabla\varphi_1,\nabla\psi_1\right\rangle_{L^2(\Omega_1)}+\left\langle \nabla\varphi_2,\nabla\psi_2\right\rangle_{L^2(\Omega_2)}.
$$
Thus, since $\mathcal{H}_\Gamma$  is a closed linear subspace of $\mathcal{H}_0$, it is  a Banach space in which the following compact and continuous embedding hold:
\begin{equation}
\label{eq:embedding}
  \mathcal{H}_\Gamma\hookrightarrow \mathcal{L},\quad {\text{and}\quad  \mathcal{H}_\Gamma\rightarrow \mathcal{L}_\Gamma:= L^2(\Gamma)\times L^2(\Gamma)} .
\end{equation}
{Moreover, if $\Phi\in\mathcal{H}_\Gamma$ we have $\int_{\Gamma}  \varphi_i^2=\int_{\partial\Omega_i}  \varphi_i^2 $, and due to the trace inequality,
we find that
\begin{equation*}
 \int_{\Gamma}  \varphi_i^2=\|\varphi_i\|^2_{L^2(\partial\Omega_i)}\leq C_{\Omega_i}\|\varphi_i\|^2_{H^1(\Omega_i)},
\end{equation*}
with $C_{\Omega_i}>0$, depending on the domain $\Omega_i$. It is worth pointing out that, due to Poincare's inequality the norms $\|\cdot \|_{\mathcal{H}_\Gamma}$  and $\|\cdot\|_{\mathcal{H}_0}$ are equivalent. Therefore,  if $C_\Omega$ is a positive constant, we  define a global trace inequality as
\begin{equation*}
\label{eq:global.trace}
\left( \int_{\Gamma}  \varphi_1^2+\int_{\Gamma}  \varphi_2^2\right)=\|\Phi\|^2_{\mathcal{L}_\Gamma}\leq  C_{\Omega}\|\Phi\|^2_{\mathcal{H}_\Gamma}.
\end{equation*}

Finally, we consider the linear operator $\mathbf{L_F}:\mathcal{D}(\mathbf {L_F})=\mathcal{H}_\Gamma\to \mathcal{L}$.
Using this framework,  we can rephrase our original problem~\eqref{main}--\eqref{opbound} as follows:\\  Find $U\in \mathcal{H}_\Gamma$ such that,
\begin{equation}
  \label{eq:main.compact}
(\mathbf{L}_{-\lambda \mathbf{M}}+\mathbf{A}U^{p-1})U=0,
\end{equation}
where $U^p$ stands for $(u_1^p, u^p_2)^T$.

A natural idea of what a positive weak solution is, consists in considering functions that vanish on $\partial\Omega$ but imposing the inside boundary condition.
Hence we define a weak solution to~\eqref{eq:main.compact} by formally multiplying (in $\mathcal{L}$) by a test function $\Phi$ and integrating by parts. The precise definition reads as follows:
\begin{definition}
  Let  $U \in \mathcal{H}_\Gamma $ be  positive. We will say that $U$ is a \emph{(weak) solution}  of~\eqref{eq:main.compact} if $$
   \int_{\Omega_1}\nabla u_1\cdot \nabla \varphi_1+ \int_{\Omega_2}\nabla u_2 \cdot \nabla \varphi_2+\int_\Gamma \mu(u_2-u_1)(\varphi_2-\varphi_1)
 =\lambda \langle \mathbf{M}U,\Phi\rangle_{\mathcal{L}}- \langle \mathbf{A}U^p,\Phi\rangle_{\mathcal{L}},
$$
for $\Phi=(\varphi_1,\varphi_2)^T\in\mathcal{C}_{\star,\Gamma}^\infty$.

We define \emph{ weak sub- and supersolutions} as usual; i.e, by replacing in the
definition of solution equality by $\leq$ or $\geq$ respectively.
\end{definition}



\section{Auxiliary problems}
\label{subsect:eigenvalue_problem}
This section is devoted to state the general properties for two interface linear problems related to~\eqref{main} that we will use in the next sections.
 We collect them  here for the sake of completeness and refer the reader to~\cite{CiavolellaPaerthame}, ~\cite{Suarezetal}, or~\cite{WangSu} for the detailed proofs.

\subsection{A linear problem}
Let us consider the linear problem given by
\begin{equation}
\label{mainlinear}
\left\{ \begin{array}{llll}
(-\Delta+f_1(x)) u_1(x)=g_1(x),&\quad \text{ in } \Omega_1,\quad & 
 \\[6pt]
(-\Delta+f_2(x)) u_2(x)=g_2(x),&\quad \text{ in } \Omega_2,
\quad & 
\end{array}\right. \quad
\end{equation}
together with boundary conditions
\begin{equation}
  \label{mainlinearbdy}
\left\{\begin{array}{ll} \dfrac{\p u_1}{\p {\bf n_1}}= \dfrac{\p u_2}{\p {\bf n_1}}= \mu(u_2- u_1),& \quad \text{on } \Gamma,\\[10pt]
u_i =0,  & \text{\quad on } \Gamma_i, \quad \text{with}\quad  i=1,2,\end{array}\right.
\end{equation}
where $F=(f_1,f_2)^T, G=(g_1,g_2)^T$
and {$F> 0$}.
Using the notation introduced in Section~\ref{sect:preliminaries} we can rewrite this problem in a more compact form as
\begin{equation}
  \label{eq:system.aux}
\mathbf{L_\mathbf{F}}U={G}\quad\hbox{in}\quad \Omega.
\end{equation}
Associated with problem~\eqref{mainlinear}--\eqref{mainlinearbdy} we define the bilinear form $a:\mathcal{H}_\Gamma\to \mathbb{R}$, given by
\begin{equation}
\label{bilinear_form}
a(U,V) =\int_{\Omega_1}\nabla u_1\cdot \nabla v_1+ \int_{\Omega_2}\nabla u_2\cdot \nabla v_2+ \int_{\Omega_1}f_1 u_1 v_1+\int_{\Omega_2}f_2 u_2 v_2+ \int_\Gamma \mu (u_2-u_1)(v_2-v_1),
\end{equation}
and, consequently, a weak solution of~\eqref{mainlinear}--\eqref{mainlinearbdy} is a function  $U\in\mathcal{H}_\Gamma$ such that $$a(U,V)=\langle G,V\rangle_{\mathcal{L}}$$ for all $V\in\mathcal{H}_\Gamma$.

\begin{theorem}
  If $F\in L^\infty(\Omega_1)\times L^{\infty}(\Omega_2)$ and $G\in\mathcal{L}$, then~\eqref{mainlinear}--\eqref{mainlinearbdy} has a unique weak solution $U$. Moreover, there exits a positive constant $C$, such that
  $$
  \|U\|_{\mathcal{H}_\Gamma}\leq C(\|G\|_{\mathcal{L}}+\|U\|_{\mathcal{L}}).
  $$
\end{theorem}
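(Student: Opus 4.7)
The plan is to establish existence and uniqueness by applying the Lax--Milgram theorem to the bilinear form $a$ of~\eqref{bilinear_form} on the Hilbert space $\mathcal{H}_\Gamma$, and then to extract the a priori estimate by testing the resulting weak formulation against $U$ itself.

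First, I would verify that classical solutions of~\eqref{mainlinear}--\eqref{mainlinearbdy} correspond to weak solutions satisfying $a(U,V)=\langle G,V\rangle_{\mathcal{L}}$ for every $V\in\mathcal{H}_\Gamma$. The subtle point is the integration by parts across the interior interface: applying Green's identity on each $\Omega_i$ separately produces two boundary integrals on $\Gamma$, one with the outward normal $\mathbf{n_1}$ (from $\Omega_1$) and one with $-\mathbf{n_1}$ (from $\Omega_2$). Substituting the common flux value $\partial_{\mathbf{n_1}} u_1=\partial_{\mathbf{n_1}} u_2=\mu(u_2-u_1)$ and using the Dirichlet data on $\Gamma_i$ to kill the outer boundary terms, the remaining interface contribution collapses to $\int_\Gamma \mu(u_2-u_1)(v_2-v_1)$, which is exactly what appears in~\eqref{bilinear_form}.

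Next, I would check the hypotheses of Lax--Milgram on $\mathcal{H}_\Gamma$. Continuity of $a$ follows term by term from Cauchy--Schwarz, the uniform bound $\|F\|_\infty<\infty$, and the trace inequality embedded in~\eqref{eq:embedding} for the interface piece. Coercivity is where the positivity $F>0$ pays off: both the weighted $L^2$ term and $\mu\int_\Gamma (u_2-u_1)^2$ are non-negative, so $a(U,U)\geq \|\nabla U\|_{\mathcal{L}}^2=\|U\|_{\mathcal{H}_\Gamma}^2$ by the very definition of the $\mathcal{H}_\Gamma$-norm given in Section~\ref{subsect:functional_setting}. Continuity of the linear functional $V\mapsto \langle G,V\rangle_{\mathcal{L}}$ is immediate from Cauchy--Schwarz and the compact embedding $\mathcal{H}_\Gamma\hookrightarrow\mathcal{L}$. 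Lax--Milgram then yields a unique $U\in\mathcal{H}_\Gamma$ solving $a(U,V)=\langle G,V\rangle_{\mathcal{L}}$ for every test $V$.

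For the quantitative bound, I would test the weak equation with $V=U$ to obtain
\[
\|\nabla U\|_{\mathcal{L}}^2+\int_{\Omega_1} f_1 u_1^2+\int_{\Omega_2} f_2 u_2^2+\mu\|u_2-u_1\|_{L^2(\Gamma)}^2=\langle G,U\rangle_{\mathcal{L}}.
\]
Dropping the non-negative interface contribution, controlling the weighted $L^2$-part by $\|F\|_\infty\|U\|_{\mathcal{L}}^2$, and estimating $|\langle G,U\rangle_{\mathcal{L}}|\leq \|G\|_{\mathcal{L}}\|U\|_{\mathcal{L}}$, I apply Young's inequality to split $\|G\|_{\mathcal{L}}\|U\|_{\mathcal{L}}\leq \tfrac12\|G\|_{\mathcal{L}}^2+\tfrac12\|U\|_{\mathcal{L}}^2$, arriving at $\|U\|_{\mathcal{H}_\Gamma}^2\leq C(\|G\|_{\mathcal{L}}^2+\|U\|_{\mathcal{L}}^2)$, from which the stated bound follows by taking square roots. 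I expect the only non-routine point to be the correct bookkeeping in the interface integration by parts, where the opposing outward normals on the two sides of $\Gamma$ combine with the common flux to produce the symmetric interface form; everything else is a direct application of the Lax--Milgram machinery.
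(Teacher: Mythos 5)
Your proposal is correct and follows essentially the same route as the paper, which simply invokes the Lax--Milgram theorem after asserting continuity and coercivity of the bilinear form $a(\cdot,\cdot)$ on $\mathcal{H}_\Gamma$ and omits the details. Your write-up supplies exactly those details (the interface integration by parts, coercivity from $F>0$ and the definition of the $\mathcal{H}_\Gamma$-norm, and the energy estimate by testing with $U$), so there is nothing methodologically different to compare.
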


\begin{proof}
The proof might be performed after applying Lax-Milgram Theorem by showing first that the biliniear form $a(\cdot,\cdot)$ is continuous and coercive in the Hilbert space $\mathcal{H}_\Gamma$.
We omit the details here, which can be adapted from~\cite{CiavolellaPaerthame},~\cite{Suarezetal} and~\cite{WangSu}.
\end{proof}

Under these conditions, we define the resolvent operator for~\eqref{eq:system.aux}, $(\mathbf{L}_{\mathbf{F}})^{-1}  \,:\,  \mathcal{L} \to  \mathcal{H}_{\Gamma}$, by
$$
G\in\mathcal{L}\mapsto U:= (\mathbf{L}_{\mathbf{F}})^{-1}  G\in \mathcal{H}_{\Gamma},
$$
having the following.

\begin{lemma}
\label{compacteness}
{The resolvent operator  $(\mathbf{L}_{\mathbf{F}})^{-1} $ is linear, continuous and compact.}
\end{lemma}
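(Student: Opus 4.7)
My plan is to deduce the three properties almost directly from the existence/estimate theorem stated just above, combined with the compact embedding $\mathcal{H}_\Gamma\hookrightarrow \mathcal{L}$ recorded in~\eqref{eq:embedding}.

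\textbf{Linearity.} Linearity is routine: if $U_i=(\mathbf{L}_{\mathbf{F}})^{-1}G_i$ for $i=1,2$, then by the bilinearity of $a(\cdot,\cdot)$ the combination $\alpha U_1+\beta U_2$ weakly solves $\mathbf{L}_{\mathbf{F}}U=\alpha G_1+\beta G_2$, and uniqueness forces $(\mathbf{L}_{\mathbf{F}})^{-1}(\alpha G_1+\beta G_2)=\alpha U_1+\beta U_2$.

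\textbf{Continuity.} First I would upgrade the a priori estimate of the previous theorem, which contains the term $\|U\|_{\mathcal{L}}$ on the right, to a genuine bound by $\|G\|_{\mathcal{L}}$ alone. The coercivity of $a(\cdot,\cdot)$ on $\mathcal{H}_\Gamma$ (which is exactly what Lax--Milgram uses, and which comes from $F>0$, $\mu>0$, and Poincar\'e's inequality) gives some $\alpha>0$ with $a(U,U)\geq \alpha\|U\|^2_{\mathcal{H}_\Gamma}$. Testing the weak formulation with $V=U$ yields
\[
\alpha\|U\|^2_{\mathcal{H}_\Gamma}\leq a(U,U)=\langle G,U\rangle_{\mathcal{L}}\leq \|G\|_{\mathcal{L}}\|U\|_{\mathcal{L}}\leq C\|G\|_{\mathcal{L}}\|U\|_{\mathcal{H}_\Gamma},
\]
where in the last step I use the continuous embedding $\mathcal{H}_\Gamma\hookrightarrow \mathcal{L}$ (Poincar\'e applied componentwise). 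Dividing by $\|U\|_{\mathcal{H}_\Gamma}$ gives
\[
\|(\mathbf{L}_{\mathbf{F}})^{-1}G\|_{\mathcal{H}_\Gamma}=\|U\|_{\mathcal{H}_\Gamma}\leq \tfrac{C}{\alpha}\|G\|_{\mathcal{L}},
\]
which is precisely the continuity of $(\mathbf{L}_{\mathbf{F}})^{-1}:\mathcal{L}\to\mathcal{H}_\Gamma$.

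\textbf{Compactness.} Once continuity is available, compactness reduces to a factorisation argument. Writing the resolvent composed with the inclusion as
\[
\mathcal{L}\;\xrightarrow{(\mathbf{L}_{\mathbf{F}})^{-1}}\;\mathcal{H}_\Gamma\;\xhookrightarrow{\;j\;}\;\mathcal{L},
\]
the first arrow is bounded by the previous step, while the embedding $j$ is compact by the componentwise Rellich--Kondrachov theorem (each $H^1_0(\Omega_i)\hookrightarrow L^2(\Omega_i)$ is compact, since $\Omega_i$ is bounded and Lipschitz). The composition of a bounded operator with a compact one is compact, so $(\mathbf{L}_{\mathbf{F}})^{-1}$ is compact. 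Concretely, if $\{G_n\}$ is bounded in $\mathcal{L}$, then $\{U_n=(\mathbf{L}_{\mathbf{F}})^{-1}G_n\}$ is bounded in $\mathcal{H}_\Gamma$ and hence has a subsequence converging in $\mathcal{L}$.

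The only mildly subtle point is the improvement from the conditional estimate $\|U\|_{\mathcal{H}_\Gamma}\leq C(\|G\|_{\mathcal{L}}+\|U\|_{\mathcal{L}})$ to an unconditional one; everything else is standard. One could alternatively proceed by a Fredholm/contradiction argument using the compactness of $\mathcal{H}_\Gamma\hookrightarrow \mathcal{L}$ together with the uniqueness given by the previous theorem, but the coercivity route above is shorter and self-contained.
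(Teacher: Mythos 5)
Your proof is correct, and for the compactness part it takes a genuinely different --- and in fact more solid --- route than the paper. The paper dispatches linearity and continuity as ``clear from Lax--Milgram'' and then, for compactness, takes a sequence $G_n\to G$ strongly in $\mathcal{L}$, applies coercivity to $a(U_n-U,U_n-U)=\langle G_n-G,U_n-U\rangle_{\mathcal{L}}$, and concludes $\|U_n-U\|_{\mathcal{H}_\Gamma}\leq C\|G_n-G\|_{\mathcal{L}}\to 0$. That displayed estimate is exactly the unconditional continuity bound you derive (applied to a difference), so the paper's ``compactness'' computation really only re-proves continuity; a continuous image of a convergent sequence converging tells you nothing about compactness. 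Your factorisation $\mathcal{L}\xrightarrow{(\mathbf{L}_{\mathbf{F}})^{-1}}\mathcal{H}_\Gamma\hookrightarrow\mathcal{L}$, with the second arrow compact by Rellich--Kondrachov, is the standard argument and is the one that actually delivers compactness in the sense needed later (e.g.\ for the Krein--Rutman step, where $\mathcal{K}_\lambda$ acts on $\mathcal{H}_\Gamma$ and compactness comes precisely from the embedding~\eqref{eq:embedding}). The one caveat worth stating explicitly is that what you prove is compactness of the resolvent viewed as an operator into $\mathcal{L}$ (equivalently, of its composition with the inclusion $j$); compactness as a map $\mathcal{L}\to\mathcal{H}_\Gamma$ would require extra regularity and is not what is used. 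Your observation that the a priori estimate of the preceding theorem must be upgraded from $\|U\|_{\mathcal{H}_\Gamma}\leq C(\|G\|_{\mathcal{L}}+\|U\|_{\mathcal{L}})$ to a bound by $\|G\|_{\mathcal{L}}$ alone, via coercivity, is exactly the right point to flag, and your coercivity computation does it correctly.
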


\begin{proof}
The continuity and linearity of the resolvent $(\mathbf{L}_{\mathbf{F}})^{-1} $ are clear from Lax-Milgram Theorem. To show the compactness of the resolvent we take a sequence
$G_n\in \mathcal{L}$ such that
$$\lim_{n\to 0} \|G_n-G\|_{\mathcal{L}}=0,$$
and set $U= (\mathbf{L}_{\mathbf{F}})^{-1}  G$, $U_n= (\mathbf{L}_{\mathbf{F}})^{-1}  G_n$ for $n\geq 1$. Now, using the bilinear form~\eqref{bilinear_form}, we get
$$a(U_n-U,U_n-U)=\langle G_n-G,U_n-U\rangle_{\mathcal{L}},\quad \forall n\geq 1.$$
Since $a(\cdot,\cdot)$  is coercive
$$\|U_n-U\|^2_{ \mathcal{H}_{\Gamma}}\leq C|a(U_n-U,U_n-U)|=C |\langle G_n-G,U_n-U\rangle_{\mathcal{L}}|\leq C
\|G_n-G\|_{ \mathcal{L}} \|U_n-U\|_{ \mathcal{H}_{\Gamma}}.
$$
Consequently,
$$\|U_n-U\|_{ \mathcal{H}_{\Gamma}}\leq C \|G_n-G\|_{ \mathcal{L}} \to 0,\quad \hbox{as}\quad n\to \infty,$$
showing that the resolvent is a compact operator.
\end{proof}

Next, by elliptic regularity, see~\cite{LG-book} and~\cite{WangSu}, and also as a direct consequence of the comparison principle stated in~\cite{WangSu}, we have the following result:
\begin{theorem}
 Let $F, G\in  \mathcal{C}^{0,\eta}(\Omega_1)\times \mathcal{C}^{0,\eta}(\Omega_2)$ for some $\eta \in (0, 1)$ and $F> 0$, then~\eqref{eq:system.aux} has a unique solution $U\in \mathcal{C}^{2,\eta}_\Gamma(\Omega)$ that verifies
$$
  \|U\|_{C^{2,\eta}_\Gamma}\leq C\|G\|_{C^{0,\eta}}.
$$
Moreover, if $G\geq 0$,  then $U\geq0$.
 \end{theorem}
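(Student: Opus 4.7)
The plan is to take the existence and uniqueness of the weak solution $U\in\mathcal{H}_\Gamma$ from the previous theorem as given, and to concentrate on three separate tasks: promoting $U$ from $\mathcal{H}_\Gamma$ to $\mathcal{C}^{2,\eta}_\Gamma$, establishing the Schauder-type estimate, and proving the sign preservation.

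For the regularity, I would treat each component separately on its own subdomain, regarding the interface condition as a Robin boundary condition whose datum is the trace of the other component. Explicitly, $u_1$ is to be viewed as a solution of
\begin{equation*}
(-\Delta+f_1)u_1=g_1 \text{ in } \Omega_1,\qquad u_1=0 \text{ on } \Gamma_1,\qquad \dfrac{\partial u_1}{\partial {\bf n_1}}+\mu u_1=\mu\,u_2|_\Gamma \text{ on } \Gamma,
\end{equation*}
and symmetrically for $u_2$, using that ${\bf n_2}=-{\bf n_1}$ on $\Gamma$. From $U\in\mathcal{H}_\Gamma$ we have $u_i|_\Gamma\in H^{1/2}(\Gamma)$, so a first application of $L^p$ theory for mixed Dirichlet--Robin problems (with $g_i\in L^\infty$ and Robin datum $\mu u_j|_\Gamma$) produces $u_i\in W^{2,p}(\Omega_i)$ for some $p>N$. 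Iterating on the improving regularity of the trace yields $u_i\in\mathcal{C}^{1,\alpha}(\overline{\Omega_i})$ for every $\alpha\in(0,1)$; once the Robin datum lies in $\mathcal{C}^{1,\eta}(\Gamma)$ and $g_i\in\mathcal{C}^{0,\eta}(\overline{\Omega_i})$, the classical Schauder estimates for mixed Dirichlet--Robin problems (see~\cite{LG-book}) close the bootstrap with $u_i\in\mathcal{C}^{2,\eta}(\overline{\Omega_i})$.

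The Schauder-type estimate then follows by chaining the subdomain bounds
\begin{equation*}
\|u_i\|_{\mathcal{C}^{2,\eta}(\overline{\Omega_i})}\leq C\bigl(\|g_i\|_{\mathcal{C}^{0,\eta}(\overline{\Omega_i})}+\|u_j\|_{\mathcal{C}^{1,\eta}(\Gamma)}+\|u_i\|_{L^2(\Omega_i)}\bigr),
\end{equation*}
together with the trace inequality from Section~\ref{sect:preliminaries} and a standard interpolation to absorb the $\mathcal{C}^{1,\eta}(\Gamma)$ contribution into the $\mathcal{C}^{2,\eta}$ norms with a small coefficient. The residual $L^2$-norms are in turn controlled by the weak-solution bound established above, which is itself majorised by $C\|G\|_{\mathcal{L}}\leq C\|G\|_{\mathcal{C}^{0,\eta}}$.

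For the sign statement the cleanest route is to apply the comparison principle of~\cite{WangSu} with the trivial function as subsolution. A self-contained variational alternative is to test the weak formulation (extended by density to $\mathcal{H}_0$) against $V=U^-=(u_1^-,u_2^-)$: using $u_i u_i^-=-(u_i^-)^2$, $\nabla u_i\cdot\nabla u_i^-=-|\nabla u_i^-|^2$, and the algebraic expansion
\begin{equation*}
\mu\int_\Gamma(u_2-u_1)(u_2^--u_1^-)=-\mu\int_\Gamma\bigl[(u_2^--u_1^-)^2+u_1^+u_2^-+u_2^+u_1^-\bigr]\leq 0,
\end{equation*}
one arrives at
\begin{equation*}
\|\nabla U^-\|_{\mathcal{L}}^2+\langle\mathbf{F}U^-,U^-\rangle_{\mathcal{L}}\leq -\langle G,U^-\rangle_{\mathcal{L}}\leq 0,
\end{equation*}
so the strict positivity $F>0$ forces $U^-\equiv 0$, i.e.\ $U\geq 0$.

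The main obstacle is the regularity bootstrap at the interface: the coupling through the Robin-type condition prevents a one-shot Schauder argument, and one is forced to raise the trace regularity first via an $L^p$ step before switching to Schauder. Once this is arranged, the remaining two points reduce to routine book-keeping.
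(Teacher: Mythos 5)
Your proposal is correct, and it is worth noting that the paper itself offers no proof of this theorem at all: it is stated as a direct consequence of ``elliptic regularity, see~\cite{LG-book} and~\cite{WangSu}'' and of the comparison principle of~\cite{WangSu}. What you have written is essentially the argument hidden behind those citations, made explicit: you decouple the interface condition into two one-sided mixed Dirichlet--Robin problems with datum $\mu u_j|_\Gamma$, run an $L^p$-to-Schauder bootstrap that alternately improves the trace regularity of each component, and close the estimate by absorbing the $\mathcal{C}^{1,\eta}(\Gamma)$ coupling term via interpolation and the a priori bound $\|U\|_{\mathcal{H}_\Gamma}\leq C\|G\|_{\mathcal{L}}$ (which follows from coercivity exactly as in the proof of Lemma~\ref{compacteness}). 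For the sign statement the paper relies on the cited comparison principle, whereas your variational alternative is self-contained and correct: the expansion of the interface term as $-\mu\int_\Gamma\bigl[(u_2^--u_1^-)^2+u_1^+u_2^-+u_2^+u_1^-\bigr]\leq 0$ checks out, and testing with $U^-$ is legitimate because the interface condition is a natural, not essential, boundary condition in the weak formulation, so the identity $a(U,V)=\langle G,V\rangle_{\mathcal{L}}$ holds for all $V\in\mathcal{H}_0$. The only point that deserves a remark, and which neither you nor the paper addresses, is the regularity at the corner set $\overline\Gamma\cap\overline{\Gamma_i}$, where the Dirichlet and Robin conditions meet; full $\mathcal{C}^{2,\eta}$ regularity up to those points requires the blanket smoothness and compatibility assumptions made in Section~\ref{sect:preliminaries}, and your bootstrap should be understood as taking place away from that set or under those assumptions.
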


To end this subsection, let us consider, for $F\in \mathcal{C}^{0,\eta}(\Omega_1)\times \mathcal{C}^{0,\eta}(\Omega_2)$ the eigenvalue problem
\begin{equation}
  \label{eq:system.aux.eigenvalue}
\mathbf{L_\mathbf{F}}U={\sigma U}\quad\hbox{in}\quad \Omega.
\end{equation}
This problem will be crucial in the sequel to describe the existence of solutions for 
 the nonlinear problem~\eqref{eq:main.compact}. We firs notice that
having a compact and {positive resolvent}, one has that the spectrum of $\mathbf{L_F}$ might contain infinitely many isolated eigenvalues (see for instance \cite{Bre})
and together with the compactness of the operator, we have that the spectrum is discrete and each one of the eigenvalues has finite multiplicity.
 As it is usual in the literature, see for instance~\cite{Am05}, we introduce the following definition.
\begin{definition}
\label{def_princeig}
Given an operator $\mathbf{A}$ and a domain $\mathcal{O}=\mathcal{O}_1\cup\mathcal{O}_2$, we will say that $\Sigma[\mathbf{A} ;\mathcal{O}]$  is the
\emph{principal eigenvalue} of $\mathbf{A}$ in $\mathcal{O}$
if   $\Sigma[\mathbf{A} ;\mathcal{O}]$ is the unique value for which $\mathbf{A}\Phi=\Sigma[\mathbf{A} ;\mathcal{O}]\Phi$ (together with the boundary conditions) 
possesses a solution $\Phi$ {with $\Phi\succeq 0$}. Such a function $\Phi$ is called \emph{principal eigenfunction}.
\end{definition}
{ Note that it is common to {assume} that the principal eigenfunction is positive {in $\mathcal{O}$}. In our context we only impose that one of the two components is positive in $\mathcal{O}_1$ (or $\mathcal{O}_2$), allowing the other one to be identically 0.}

\begin{theorem}
  Problem~\eqref{eq:system.aux.eigenvalue}
  has a principal eigenvalue $\Sigma[\mathbf{L_F};\Omega]$ which is unique and simple (i.e., the algebraic multiplicity is 1).
  Moreover, $\Sigma[\mathbf{L_F};\Omega]$ is continuous and monotone increasing with respect to $\mu$.
\end{theorem}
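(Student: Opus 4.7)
The plan is to deduce existence, uniqueness and simplicity of $\Sigma[\mathbf{L_F};\Omega]$ from the Krein-Rutman theorem applied to the resolvent $(\mathbf{L_F})^{-1}$, and to obtain the continuity and monotone dependence on $\mu$ from the Rayleigh variational characterisation induced by the symmetric bilinear form~\eqref{bilinear_form}.

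First I would set up the Krein-Rutman framework in, say, $\mathcal{C}_\Gamma^{0,\eta}$ (where the positive cone has non-empty interior). Lemma~\ref{compacteness} together with the compact embedding $\mathcal{H}_\Gamma\hookrightarrow\mathcal{L}$ in~\eqref{eq:embedding} and the elliptic regularity stated above shows that $(\mathbf{L_F})^{-1}$ is compact, and the comparison principle from the same regularity theorem shows that it preserves the cone $\{G : g_i\geq 0\}$. I would then verify \emph{strong positivity}: if $G\succeq 0$ is nontrivial, $U:=(\mathbf{L_F})^{-1}G$ satisfies $u_i>0$ in $\Omega_i$. Inside each $\Omega_i$ this is the scalar strong maximum principle for $-\Delta+f_i$. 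To propagate positivity across the membrane I would argue by contradiction: if $u_2\equiv 0$ in $\Omega_2$, the interface condition forces $\mu(u_2-u_1)=\partial u_2/\partial\mathbf{n_1}\equiv 0$ on $\Gamma$, so $u_1\equiv 0$ on $\Gamma$; then Hopf's lemma applied in $\Omega_1$ (where $u_1\geq 0$ and $u_1=0$ on $\Gamma_1\cup\Gamma$) would give $\partial u_1/\partial\mathbf{n_1}<0$ on $\Gamma$, contradicting $\partial u_1/\partial\mathbf{n_1}=\mu(u_2-u_1)=0$. Krein-Rutman then delivers a unique principal eigenvalue $\Sigma[\mathbf{L_F};\Omega]$ with a positive, simple eigenfunction.

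For the dependence on $\mu$ I would use the variational characterisation coming from the self-adjointness of $\mathbf{L_F}$ on $\mathcal{L}$,
$$
\Sigma[\mathbf{L_F};\Omega] \,=\, \inf_{\Phi\in\mathcal{H}_\Gamma\setminus\{0\}} \frac{a(\Phi,\Phi)}{\|\Phi\|_{\mathcal{L}}^2}.
$$
Inspection of~\eqref{bilinear_form} shows that, for fixed $\Phi$, $a(\Phi,\Phi)$ is affine in $\mu$ with nonnegative slope $\|\phi_2-\phi_1\|_{L^2(\Gamma)}^2$; hence the infimum is monotone nondecreasing in $\mu$. Strict monotonicity would follow by testing the Rayleigh quotient at a smaller $\mu_1$ on the principal eigenfunction $\Phi_{\mu_2}$ corresponding to $\mu_2>\mu_1$ and noting that $\phi_{1,\mu_2}\neq\phi_{2,\mu_2}$ on $\Gamma$ (otherwise the interface condition would force $\partial\phi_{i,\mu_2}/\partial\mathbf{n_1}\equiv 0$, and Hopf applied in each $\Omega_i$ against the strict positivity of $\Phi_{\mu_2}$ would yield a contradiction). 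Continuity in $\mu$ then follows because $a=a_\mu$ depends continuously on $\mu$ in operator norm (via the global trace inequality on $\mathcal{H}_\Gamma$), combined with the standard perturbation theory of simple isolated eigenvalues of self-adjoint compact resolvents.

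The main technical obstacle I expect is the transfer of positivity across the interior membrane $\Gamma$: the classical strong maximum principle and Hopf lemma only apply within each subdomain, so the Kedem-Katchalsky condition must be used carefully to exclude eigenfunctions supported on only one side of $\Gamma$. Once strong positivity is in place, the simplicity statement and the variational monotonicity/continuity claims follow by arguments that can be adapted from \cite{CiavolellaPaerthame,Suarezetal,WangSu}.
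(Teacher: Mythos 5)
Your overall strategy --- Krein--Rutman applied to the compact positive resolvent for existence, uniqueness and simplicity, and the Rayleigh quotient of the symmetric form~\eqref{bilinear_form} for the dependence on $\mu$ --- is the standard route; the paper itself gives no argument for this theorem (it defers entirely to~\cite{WangSu} and to the preceding compactness and positivity results), and the same Krein--Rutman machinery is exactly what the paper deploys later in Lemma~\ref{lemma:eigenfunctions.K} for the weighted problem. Your treatment of strong positivity across the membrane (if $u_2\equiv 0$ then $g_2\equiv0$, the interface condition forces $u_1=0$ on $\Gamma$, and Hopf's lemma in $\Omega_1$ then contradicts $\partial u_1/\partial\mathbf{n_1}=\mu(u_2-u_1)=0$) is correct and is essentially the argument of Lemma~\ref{Lem23.lin}. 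The affine-in-$\mu$ Rayleigh quotient argument correctly yields that $\Sigma[\mathbf{L_F};\Omega]$ is nondecreasing in $\mu$; moreover, being an infimum of affine functions of $\mu$, it is concave and hence automatically continuous on $(0,\infty)$, so the perturbation theory you invoke for continuity is not even needed.

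The one genuine gap is the strict monotonicity step. You claim $\phi_{1,\mu_2}\neq\phi_{2,\mu_2}$ on $\Gamma$ because otherwise the interface condition would force $\partial\phi_{i,\mu_2}/\partial\mathbf{n_1}\equiv 0$ there and ``Hopf applied against the strict positivity of $\Phi_{\mu_2}$'' would give a contradiction. But Hopf's lemma yields information only at boundary points where a nonnegative supersolution attains the value zero; by strong positivity the principal eigenfunction is strictly positive on $\Gamma$, so a vanishing normal derivative on $\Gamma$ produces no contradiction at all. Worse, if $\phi_1=\phi_2>0$ on $\Gamma$ together with $\partial\phi_i/\partial\mathbf{n_1}=0$ there, then the pair satisfies the Kedem--Katchalsky condition for \emph{every} $\mu>0$, so $\Sigma$ would be the principal eigenvalue for every $\mu$ and the map $\mu\mapsto\Sigma[\mathbf{L_F};\Omega]$ would be constant: strict monotonicity genuinely fails in that degenerate configuration unless one separately rules out the coincidence that the two mixed Dirichlet--Neumann problems on $\Omega_1$ and $\Omega_2$ share the same principal eigenvalue with eigenfunctions agreeing on $\Gamma$. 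If ``monotone increasing'' in the statement is read as nondecreasing, your affine Rayleigh-quotient argument already suffices and the flawed step should simply be deleted; if strict monotonicity is intended, you need a different argument (for instance, differentiating $\Sigma(\mu)$ and showing $\int_\Gamma(\phi_2-\phi_1)^2>0$ via a non-coincidence or unique-continuation type argument), or an appeal to the corresponding statement in~\cite{WangSu}.
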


 The proof of this result is a consequence of the previous analysis and can be found in~\cite{WangSu}}.

\subsection{A weighted linear eigenvalue problem}
Let us now extend the results shown for problem~\eqref{eq:system.aux.eigenvalue} to a weighted eigenvalue problem of the form
\begin{equation}
\label{13}
    \mathbf{L_F} \Phi=
    \lambda \mathbf{M}\Phi\quad\hbox{in}\quad \Omega,\end{equation}
    where $m_i$ are assumed to be two positive $L^\infty(\Omega_i)$ functions.  Problem~\eqref{13} is the short version of the equivalent problem of finding $(\varphi_1, \varphi_2)^T\in\mathcal{H}_\Gamma$ such that
\begin{equation}\label{eq.13.long}
\left\{ \begin{split}
& (-\Delta+f_1) \varphi_1 = \lambda m_1(x) \varphi_1\quad \hbox{in}\quad \Omega_1,\\ &
(-\Delta +f_2)\varphi_2 =  \lambda m_2(x) \varphi_2\quad \hbox{in}\quad \Omega_2.
\end{split}\right.
\end{equation}
{with boundary conditions}
\begin{equation}
\label{eq.13.boundary}
{\dfrac{\partial \varphi_1}{\partial {\bf n_1}} =  \dfrac{\partial \varphi_2}{\partial {\bf n_1}} =\mu (\varphi_2-\varphi_1)\quad \hbox{on}\quad \Gamma, \qquad \varphi_1 = 0\quad \text{on } \Gamma_1,\qquad \varphi_2 = 0 \quad \text{on } \Gamma_2.}
\end{equation}
Similar weighted systems, with the solutions defined in the whole domain $\Omega$, but without the interface boundary condition,  were studied in, for instance, \cite{DelSua}. For the one equation setting see~\cite{Lo3}.

\begin{definition}
\label{def:strongly.positive}
  A function $\Psi\in \mathcal{H}$ is said to be  \emph{strongly positive}, denoted by $\Psi\gg 0$, { if $\Psi\in \mathcal{C}_\Gamma(\Omega)$} and the following two conditions hold:
  \begin{itemize}
    \item[i)]For any $x\in \Omega_i \cup \Gamma$, $\psi_i(x)>0$;\quad and
    \item[ii)] for any $y\in\Gamma_i$,  $\dfrac{\partial \psi_i(y)}{\partial \bf n_i} < 0$ and $\psi_i(y)=0$.
  \end{itemize}
  \end{definition}

\begin{lemma}
\label{Lem23.lin}
Let $\Phi\in\mathcal{H}_\Gamma$ be a non-negative solution of~\eqref{13} associated with a positive eigenvalue $\lambda$. Then $\Phi$ is strongly positive.
\end{lemma}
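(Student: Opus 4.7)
The natural route is to apply the strong maximum principle componentwise and then use the interface condition as a bridge between the two components. Rewriting each equation in the form $(-\Delta + f_i - \lambda m_i)\varphi_i = 0$ with bounded coefficients, and noting $\varphi_i \geq 0$ in $\Omega_i$, the classical strong maximum principle (valid for operators with sign-indefinite zero order term, applied to nonnegative supersolutions) gives the dichotomy: either $\varphi_i \equiv 0$ in $\Omega_i$, or $\varphi_i > 0$ throughout $\Omega_i$. Since $\Phi$ is a nontrivial eigenfunction, at least one component is not identically zero; without loss of generality $\varphi_2 > 0$ in $\Omega_2$.

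The first real step is to rule out that the other component vanishes identically. Suppose $\varphi_1 \equiv 0$. Then the interface condition
$$
\dfrac{\partial\varphi_1}{\partial \mathbf{n}_1} = \dfrac{\partial\varphi_2}{\partial \mathbf{n}_1} = \mu(\varphi_2-\varphi_1)\quad \text{on } \Gamma
$$
forces $\mu\varphi_2 = 0$ on $\Gamma$, hence $\varphi_2 = 0$ on $\Gamma$ (as $\mu>0$), and simultaneously $\partial_{\mathbf{n}_1}\varphi_2 = 0$ on $\Gamma$. But $\varphi_2 > 0$ in $\Omega_2$ with $\varphi_2 = 0$ on $\Gamma\subset\partial\Omega_2$ contradicts Hopf's lemma applied from the $\Omega_2$-side, which would require a strictly positive derivative in the direction $\mathbf{n}_1$ (inward to $\Omega_2$). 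Consequently $\varphi_1 \not\equiv 0$ and so $\varphi_1>0$ in $\Omega_1$ as well.

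Next, I would establish strict positivity on the interior interface $\Gamma$ itself, which is condition (i) of the definition of strongly positive. Suppose $\varphi_1(x_0)=0$ at some $x_0 \in \Gamma$. Since $\varphi_1 > 0$ in $\Omega_1$, Hopf's lemma (using that $\Omega_1$ enjoys an interior sphere condition at $x_0$ by regularity of $\Gamma$) yields $\partial_{\mathbf{n}_1}\varphi_1(x_0)<0$. On the other hand, the interface relation gives $\partial_{\mathbf{n}_1}\varphi_1(x_0) = \mu\varphi_2(x_0) \geq 0$, a contradiction. Symmetrically, if $\varphi_2(x_0)=0$, Hopf applied from $\Omega_2$ gives $\partial_{\mathbf{n}_1}\varphi_2(x_0)>0$ (since $-\mathbf{n}_1$ is outward to $\Omega_2$), while the interface forces $\partial_{\mathbf{n}_1}\varphi_2(x_0) = -\mu\varphi_1(x_0)\leq 0$, again a contradiction. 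Hence $\varphi_i>0$ on $\Gamma$.

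Finally, condition (ii) on $\Gamma_i$ is obtained directly by Hopf: on $\Gamma_i$ we already have $\varphi_i = 0$ by the Dirichlet data, while $\varphi_i>0$ in the interior $\Omega_i$, so the outward normal derivative at any $y\in\Gamma_i$ is strictly negative, i.e.\ $\partial_{\mathbf{n}_i}\varphi_i(y)<0$. The main subtlety throughout is keeping the sign of the interface condition straight relative to the two outward normals and confirming that the domains are regular enough for Hopf's lemma to apply at points of $\Gamma$; this is the step where the Lipschitz regularity assumed in Section~\ref{subsect:functional_setting} (together with elliptic regularity on $\Phi$ furnished by the results stated just before the lemma) is really used.
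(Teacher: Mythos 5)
Your proof is correct, but it is organised differently from the paper's. The paper proceeds by comparison with two auxiliary Dirichlet problems: it introduces $v_1$ solving \eqref{lin.auxl1} with right-hand side $\lambda m_1\varphi_1\geq 0$, concludes $\varphi_1\geq v_1>0$ in $\Omega_1$, upgrades this to $\varphi_1>0$ on $\Gamma$ via Hopf, and only then treats $\varphi_2$ through a second auxiliary problem \eqref{lin.auxl2} whose boundary datum on $\partial\Omega_2$ is the already-positive $\varphi_1$. You instead apply the strong maximum principle dichotomy (either $\varphi_i\equiv 0$ or $\varphi_i>0$) directly to each component and let the interface condition do the coupling symmetrically. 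Two things your route buys: (a) you explicitly exclude the degenerate alternative $\varphi_1\equiv 0$ (via $\varphi_2=\partial_{\mathbf{n}_1}\varphi_2=0$ on $\Gamma$ contradicting Hopf from the $\Omega_2$ side), a case the paper's argument tacitly assumes away, since its auxiliary solution $v_1$ is positive only when $\lambda m_1\varphi_1\not\equiv 0$; and (b) your contradiction at a putative zero $x_0\in\Gamma$, namely Hopf's $\partial_{\mathbf{n}_1}\varphi_1(x_0)<0$ against the interface identity $\partial_{\mathbf{n}_1}\varphi_1(x_0)=\mu\varphi_2(x_0)\geq 0$, is a cleaner formulation than the paper's ``$x_0$ is not a minimum'' phrasing. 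Note that your argument never actually uses $\lambda>0$, so it is marginally more general; as you observe, both arguments ultimately rest on the elliptic regularity and interior-sphere hypotheses needed for Hopf's lemma at $\Gamma$.
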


\begin{proof}
Let $(\varphi_1,\varphi_2)^T\in\mathcal{H}_\Gamma$ be a non-negative solution of  \eqref{eq.13.long}--\eqref{eq.13.boundary}.
Now, following the analysis developed in \cite{MolRossi}, let $v_1\in H_0^1(\Omega_1)$ be a positive solution to the auxiliary problem
\begin{equation}
\label{lin.auxl1}
\left\{\begin{array}{ll} (-\Delta+f_1) v_1=g_1, & \quad \hbox{in}\quad \Omega_1,\\
v_1=0, & \quad \hbox{on}\quad \partial \Omega_1,\end{array}\right. \quad \hbox{with}\quad g_1=\lambda m_1(x) \varphi_1\geq 0.
\end{equation}
By assumption, we have that $\varphi_1\geq 0$ on $\Gamma$
and, hence, $\varphi_1$ is a supersolution for the auxiliary problem \eqref{lin.auxl1}, such that by the comparison principle and the maximum principle
$$\varphi_1\geq v_1>0 \quad \hbox{in}\quad \Omega_1.$$
Furthermore, we show that actually the solution $\varphi_1$ is strictly positive on $\Gamma$. Indeed, if we assume that $\varphi_1(x_0)=0$ for some $x_0\in \Gamma$, due to Hopf's Lemma
we have that
$$   \dfrac{\partial \varphi_1(x_0)}{\partial {\bf n_1}} <0,$$
which means that $x_0$ is not a minimum of $\varphi_1$ in $\overline\Omega_1$, contradicting the fact that $\varphi_1(x_0)=0$. Therefore, $\varphi_1>0$ on $\Gamma$.
Condition (ii) is also a direct consequence of Hopf's Lemma.

On the other hand,
we also consider a second auxiliary problem
\begin{equation}
\label{lin.auxl2}
\left\{\begin{array}{ll} (-\Delta+f_2) v_2= \lambda m_2(x) \varphi_2, & \quad \hbox{in}\quad \Omega_2,\\
v_2=\varphi_1, & \quad \hbox{on}\quad \partial \Omega_2.\end{array}\right.
\end{equation}
Thanks to the strong maximum principle and since $\varphi_1>0$  on $\Gamma \subset \partial \Omega_2$ we find that $v_2>0$ in $\Omega_2$. Consequently, since $\varphi_2$ is
a supersolution to problem \eqref{lin.auxl2} it follows that
$$\varphi_2\geq v_2>0 \quad \hbox{in}\quad \Omega_2,$$
showing the strong positivity of the eigenfunction $\Phi$.

The fact that each $\varphi_i\in \mathcal{C}(\Omega_i)$ follows from elliptic regularity and the previous subsection.
\end{proof}

\begin{remark}
 Having that any non-negative solution is actually a strong positive solution is in some sense a way of having the strong Maximum Principle and the existence of a positive strict supersolution.
In fact, once we have that the first eigenvalue is positive, by the previous Lemma~\ref{Lem23.lin} we find that the eigenfunctions associated with it are positive. This provides us with a
 strict positive supersolution. Indeed, for the operator  $\mathbf{L_F} +\Lambda \I$, with $\Lambda$ a sufficiently large positive constant, positive constants $\Upsilon$ are positive strict supersolutions
 in the sense that
 $$(\mathbf{L}_{\mathbf{F}} +\Lambda \I)\Upsilon>0 \quad \hbox{in}\quad \Omega, \quad\hbox{or}\quad \Upsilon>0 \quad \hbox{on}\quad \partial \Omega.$$
  \end{remark}

Furthermore, we will now prove that the first eigenvalue is actually positive and simple, in the sense of algebraic multiplicity 1, applying Krein-Rutrman Theorem as one of the main ingredients.
To do so, the next definition is an important element in order to apply the classical Krein-Rutman Theorem.

\begin{definition}
  \label{def:strongly.positive.operator}
  Let $P$ be a positive cone with non-empty interior in $\mathcal{H}_\Gamma$ and $T:\mathcal{H}_\Gamma\to \mathcal{H}_\Gamma$ a linear operator. We say that $T$ is strongly positive if
  \begin{equation*}
\label{cone}
T(P\setminus 0)\subset {\rm Int}P.
\end{equation*}
\end{definition}

See~\cite{Am2} and~\cite{Du2006Book} for more details on strongly positive operators and positive cones.  The next theorem shows that~\eqref{13} has indeed a principal eigenvalue.

\begin{theorem}
\label{thm:principal.eigenvalue}
Problem~\eqref{13} has a principal eigenvalue denoted by $\lambda_1$. Moreover, $\lambda_1$ is real, simple (in the sense of
multiplicities) and strictly positive.
\end{theorem}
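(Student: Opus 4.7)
The plan is to recast \eqref{13} as a fixed point problem for a compact, strongly positive operator, and then invoke the Krein-Rutman theorem together with the positivity result of Lemma~\ref{Lem23.lin}. Concretely, I would introduce
$$
T:=(\mathbf{L_F})^{-1}\mathbf{M},
$$
so that \eqref{13} is equivalent to $T\Phi=\sigma \Phi$ with $\sigma=1/\lambda$. Since $\mathbf{M}$ is a bounded diagonal multiplication operator on $\mathcal{L}$ (because $m_i\in L^\infty(\Omega_i)$), and Lemma~\ref{compacteness} asserts that $(\mathbf{L_F})^{-1}:\mathcal{L}\to\mathcal{H}_\Gamma$ is linear, continuous and compact, the composition $T$ is compact. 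Moreover, the previous positivity statement for the resolvent together with the fact that $m_i>0$ shows that $T$ is order-preserving: $\Phi\geq 0\Longrightarrow T\Phi\geq 0$.

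To invoke Krein-Rutman (see Definition~\ref{def:strongly.positive.operator}) I need a positive cone with non-empty interior on which $T$ acts as a strongly positive operator. Because the positive cone of $\mathcal{H}_\Gamma$ has empty interior, I would pass to the space $\mathcal{C}^1_\Gamma(\overline{\Omega})$ using the elliptic regularity bound
$$
\|U\|_{\mathcal{C}^{2,\eta}_\Gamma}\leq C\|G\|_{\mathcal{C}^{0,\eta}}
$$
stated earlier: the mapping $\Phi\mapsto \mathbf{M}\Phi$ sends bounded sets of $\mathcal{C}^1_\Gamma$ into bounded sets of $\mathcal{C}^{0,\eta}$, and then the resolvent lifts them compactly to $\mathcal{C}^{2,\eta}_\Gamma\hookrightarrow\mathcal{C}^1_\Gamma$, so $T$ remains compact on $\mathcal{C}^1_\Gamma$. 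Take $P$ to be the cone of componentwise non-negative functions of $\mathcal{C}^1_\Gamma$; its interior consists of the functions satisfying Definition~\ref{def:strongly.positive}. Given $\Phi\in P\setminus\{0\}$, the image $U=T\Phi$ solves $\mathbf{L_F}U=\mathbf{M}\Phi$ with $\mathbf{M}\Phi\gneq 0$, so the argument of Lemma~\ref{Lem23.lin} (applied with right-hand side $g_1=m_1\varphi_1$ and $g_2=m_2\varphi_2$, using the strong maximum principle and Hopf's lemma on each $\Omega_i$ and the interface transmission condition to propagate positivity from one subdomain to the other) shows that $U\gg 0$, i.e.\ $T\Phi\in\operatorname{Int} P$. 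Hence $T$ is strongly positive.

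With these ingredients Krein-Rutman provides a unique eigenvalue $\sigma_1>0$ of maximal modulus, algebraically simple, whose eigenfunction $\Phi_1\in\operatorname{Int}P$ is strongly positive; moreover, $\sigma_1$ is the only eigenvalue of $T$ admitting a non-negative eigenfunction, because any such eigenfunction must be strongly positive by Lemma~\ref{Lem23.lin}, and simplicity rules out a second independent one. Setting $\lambda_1:=1/\sigma_1$ gives a real, strictly positive, algebraically simple principal eigenvalue of \eqref{13} in the sense of Definition~\ref{def_princeig}, and the associated eigenfunction inherits the strong positivity from $\Phi_1$.

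The main technical obstacle is the third step, namely the verification that $T$ is strongly positive on the interior of the cone of $\mathcal{C}^1_\Gamma$; this is where the transmission condition on $\Gamma$ really enters, since the usual one-domain strong maximum principle has to be applied twice (once on each $\Omega_i$) and chained via $\Gamma$ as in Lemma~\ref{Lem23.lin}. Everything else (compactness, positivity, and the Krein-Rutman conclusion) is then a routine adaptation of the classical scheme.
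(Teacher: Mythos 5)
Your argument is correct, but it takes a genuinely more direct route than the paper. You apply Krein--Rutman once to the single operator $T=(\mathbf{L_F})^{-1}\mathbf{M}$ and read off $\lambda_1=1/r(T)$, whereas the paper never inverts $\mathbf{L_F}$ directly: it shifts to the family $\mathcal{K}_\lambda=(\mathbf{L_F}+\Lambda\mathbf{I})^{-1}(\lambda\mathbf{M}+(\Lambda+\omega)\mathbf{I})$, applies Krein--Rutman to each $\mathcal{K}_\lambda$ (Lemma~\ref{lemma:eigenfunctions.K}), proves that $r(\mathcal{K}_\lambda)$ crosses the value $1$ by monotonicity and continuity in $\lambda$ (Lemma~\ref{lemma:spectral.radius.K}), and finally locates $\lambda_1$ as the unique zero of the decreasing map $\lambda\mapsto\Sigma[\mathbf{L}_{\mathbf{F}-\lambda\mathbf{M}};\Omega]$, with positivity coming from $\Sigma(0)=\Sigma[\mathbf{L_F};\Omega]>0$. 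Your approach is shorter and cleaner, and it is also more careful on one point the paper glosses over: you correctly note that the positive cone of $\mathcal{H}_\Gamma$ has empty interior and pass to $\mathcal{C}^1_\Gamma$ before invoking strong positivity, while the paper states Definition~\ref{def:strongly.positive.operator} in $\mathcal{H}_\Gamma$. The price is that your construction hinges on $\mathbf{L_F}$ itself being invertible with a positivity-preserving inverse, which holds here only because $F>0$ (equivalently $\Sigma[\mathbf{L_F};\Omega]>0$); the paper's shifted construction survives sign-changing $F$ and, more importantly, produces as by-products the monotonicity of $\Sigma(\lambda)$ and Corollary~\ref{lemma:omega}, which are reused in Sections~\ref{sec:nondegenerate} and~\ref{sec:degenerate}. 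One small technical point to repair in your write-up: since the $m_i$ are only assumed in $L^\infty(\Omega_i)$, the product $\mathbf{M}\Phi$ need not lie in $\mathcal{C}^{0,\eta}$, so the Schauder bound $\|U\|_{\mathcal{C}^{2,\eta}_\Gamma}\leq C\|G\|_{\mathcal{C}^{0,\eta}}$ does not apply directly; you should instead use $L^p$ elliptic estimates and the embedding $W^{2,p}\hookrightarrow \mathcal{C}^{1,\eta}$ to get compactness of $T$ on $\mathcal{C}^1_\Gamma$. This is a routine fix and does not affect the validity of the scheme.
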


The proof of this theorem will relay 
on the  analysis of the eigenvalues of two problems related to~\eqref{13}, that we analyse in the following two lemmas.

\begin{lemma}
  \label{lemma:eigenfunctions.K}
  Let $r(\mathcal{K}_\lambda)$ be the spectral radius of the operator $\mathcal{K}_\lambda:\mathcal{H}_\Gamma\to\mathcal{H}_\Gamma$ denoted by
  \begin{equation}
  \label{operatorK_lam}
  \mathcal{K}_\lambda:=(\mathbf{L}_\mathbf{F} +\Lambda \mathbf{I})^{-1} (\lambda \mathbf{M}+(\Lambda+\omega) \I).
  \end{equation}
 For any $\lambda\in\mathbb{R}$ such that
 \begin{equation}
  \label{eq:condition.Lambda}
 \lambda m_i+(\Lambda+\omega) >0.
\end{equation}
 holds, $r(\mathcal{K}_\lambda)$ is
   positive  and simple. Moreover, it is the principal eigenvalue of
   \begin{equation}
  \label{eq:pbm.K}
\mathcal{K}_\lambda\Phi=\sigma \Phi.
\end{equation}
\end{lemma}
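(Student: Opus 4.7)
The plan is to verify the hypotheses of the classical Krein--Rutman theorem for the operator $\mathcal{K}_\lambda$ acting on the Banach space $\mathcal{H}_\Gamma$, equipped with the positive cone
$$P:=\{\Psi\in\mathcal{H}_\Gamma : \psi_1\geq 0 \text{ in } \Omega_1,\ \psi_2\geq 0 \text{ in } \Omega_2\},$$
whose interior (with respect to a finer topology such as $\mathcal{C}^1_\Gamma$) consists precisely of the strongly positive functions of Definition~\ref{def:strongly.positive}. Concretely, I would verify (i) that $\mathcal{K}_\lambda$ is compact, and (ii) that $\mathcal{K}_\lambda$ is strongly positive, i.e., $\mathcal{K}_\lambda(P\setminus\{0\})\subset \operatorname{Int}(P)$. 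Once both are established, Krein--Rutman yields that the spectral radius $r(\mathcal{K}_\lambda)$ is strictly positive, is an algebraically simple eigenvalue, and is the unique eigenvalue admitting a non-negative eigenfunction, which by Definition~\ref{def_princeig} is exactly what it means to be the principal eigenvalue of~\eqref{eq:pbm.K}.

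For compactness, I would choose $\Lambda>0$ sufficiently large so that $\mathbf{F}+\Lambda\mathbf{I}$ is strictly positive (and the associated bilinear form from~\eqref{bilinear_form} is coercive). Lemma~\ref{compacteness} then gives that the resolvent $(\mathbf{L}_\mathbf{F}+\Lambda\mathbf{I})^{-1}:\mathcal{L}\to\mathcal{H}_\Gamma$ is linear, continuous and compact. Multiplication by the bounded matrix $\lambda\mathbf{M}+(\Lambda+\omega)\mathbf{I}$ is a bounded operator from $\mathcal{H}_\Gamma$ into $\mathcal{L}$ (using that $m_i\in L^\infty(\Omega_i)$ together with the compact embedding~\eqref{eq:embedding}). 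Composing, $\mathcal{K}_\lambda:\mathcal{H}_\Gamma\to\mathcal{H}_\Gamma$ factors through this compact embedding, hence is compact.

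For strong positivity, take $\Phi\in P\setminus\{0\}$. Hypothesis~\eqref{eq:condition.Lambda} guarantees that $G:=(\lambda\mathbf{M}+(\Lambda+\omega)\mathbf{I})\Phi$ has components $(\lambda m_i+\Lambda+\omega)\phi_i\geq 0$, with strict positivity on the set where $\phi_i>0$; in particular $G\succeq 0$ and $G\not\equiv 0$. The image $\Psi:=\mathcal{K}_\lambda\Phi$ is then the weak solution of $(\mathbf{L}_\mathbf{F}+\Lambda\mathbf{I})\Psi=G$ with the interface condition on $\Gamma$ and the Dirichlet condition on $\Gamma_1\cup\Gamma_2$. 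I would now reproduce verbatim the argument of Lemma~\ref{Lem23.lin}: the component whose source is non-trivial is shown to be strictly positive in its subdomain and on $\Gamma$ by comparison with a suitable Dirichlet auxiliary problem and the strong maximum principle, Hopf's lemma rules out interior boundary zeros on $\Gamma$, and then the interface condition transmits strict positivity as a Robin-type boundary datum to the other subdomain, where a second application of the strong maximum principle (exactly as for $v_2$ in~\eqref{lin.auxl2}) forces the remaining component to be strictly positive as well. Hopf's lemma on $\Gamma_1\cup\Gamma_2$ completes the check of conditions (i)--(ii) of Definition~\ref{def:strongly.positive}, so $\Psi\in\operatorname{Int}(P)$.

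The main obstacle is the degenerate case where one component of $\Phi$, say $\phi_2$, vanishes identically. In that situation $g_2\equiv 0$ in $\Omega_2$ and strict positivity of $\psi_2$ cannot be obtained from the equation in $\Omega_2$ alone; it must be inherited from $\psi_1>0$ on $\Gamma$ through the interface coupling. This transmission step is precisely the non-trivial ingredient that makes the interior membrane compatible with Krein--Rutman and distinguishes the analysis from a purely decoupled two-equation system. Once this is handled as in Lemma~\ref{Lem23.lin}, the conclusion that $r(\mathcal{K}_\lambda)$ is positive, simple and principal follows from the Krein--Rutman theorem in its standard form (see~\cite{Am2,Du2006Book}).
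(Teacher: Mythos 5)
Your proposal is correct and follows essentially the same route as the paper: both verify that $\mathcal{K}_\lambda$ is compact (via the compact resolvent of Lemma~\ref{compacteness} and the embedding~\eqref{eq:embedding}) and strongly positive (via the comparison principle, Hopf's lemma, and the interface transmission argument of Lemma~\ref{Lem23.lin}), and then invoke the Krein--Rutman theorem. Your treatment of the degenerate case where one component vanishes, and your remark that the cone's interior must be taken in a finer topology, make explicit two points the paper leaves implicit, but the underlying argument is the same.
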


\begin{proof} The result is a direct consequence
 of the Krein-Rutman Theorem (cf. \cite[Theorem 3.2]{Am2}),
as well as of several results performed in \cite{Dan1} and \cite{Lo3}.

Let us first
consider, for a fixed value of $\lambda$, the  problem
\begin{equation}
  \label{eq:aux.con.omega}
\mathbf{L}_{\mathbf{F}-\lambda\mathbf{ M}}\Phi = \omega \Phi.
\end{equation}
It is straightforward to see that  $\lambda\in \mathbb{R}$ is an eigenvalue of \eqref{13} if and only if $\omega=0$ is an eigenvalue  of the operator
$\mathbf{L}_{\mathbf{F}-\lambda\mathbf{M}}$.

On the other hand, it  is also easily seen, see~\cite{Lo3} for a weak formulation of the result, that $\omega$ is an  eigenvalue of $\eqref{eq:aux.con.omega}$ if and only if $\sigma=1$ is an eigenvalue of problem \eqref{eq:pbm.K},
where $\mathcal{K}_\lambda$ is an operator denoted by \eqref{operatorK_lam}, for any $\lambda\in\mathbb{R}$. Here $\Lambda$ is a constant (which depends on $\lambda$), chosen sufficiently large, so that
condition \eqref{eq:condition.Lambda} and, thus, $(\mathbf{L}_\mathbf{F} +\Lambda \mathbf{I})^{-1}$
exists.

Then, it turns out that $\mathcal{K}_\lambda$ is a compact, strongly positive operator (see Definition~\ref{def:strongly.positive.operator}). Indeed, first of all observe that thanks to the embedding~\eqref{eq:embedding}, we can ensure that   $\mathcal{K}_\lambda:  \mathcal{H}_\Gamma \rightarrow \mathcal{H}_\Gamma$ is a compact operator.
In order to prove that $\mathcal{K}_\lambda$ is strongly positive, let us take $\Phi_{0}\in P\setminus\{0\}$.
Thus, thanks to the comparison principle we find that the corresponding components are strictly positive in the interior of the subdomains $\Omega_i$ and, thanks to Hopf's Lemma, we also have that the exterior normal derivatives are negative on the boundary of
$\Omega$, where we have homogeneous Dirichlet boundary conditions. In other words, Definition \ref{def:strongly.positive} is satisfied by $\Phi_{0}$, proving that
$\mathcal{K}_\lambda$ is strongly positive.

Consequently, due to Krein-Rutman's Theorem, we have that $\mathcal{K}_\lambda$ has positive spectral radius, $r(\mathcal{K}_\lambda)$, which is a simple eigenvalue of~\eqref{eq:pbm.K}. Moreover, the associated eigenfunction $\Phi\in{\rm Int}P$ is positive, { in the sense that $\Phi\succeq 0$}, and there is no other eigenvalue with a positive eigenfunction. Hence, following Definition~\ref{def_princeig},  $r(\mathcal{K}_\lambda)$ is the principal eigenvalue of~\eqref{eq:pbm.K}.
Indeed, thanks to Lemma\;\ref{Lem23.lin} we actually have that the eigenfunction is strongly positive.
\end{proof}

\begin{remark}
\label{remark:eigenfunctions}
  It can be seen  that, if $\Phi$ is the eigenfunction corresponding to the eigenvalue $\sigma=1$ in problem~\eqref{eq:pbm.K}, then $\Phi$ is
  also an eigenfunction for $\omega$ in problem~\eqref{eq:aux.con.omega}. The same happens for the pair $(0,\Phi)$ and $(\lambda, \Phi)$ corresponding to~\eqref{eq:aux.con.omega} and~\eqref{13}, respectively.
\end{remark}

\begin{lemma}
\label{lemma:spectral.radius.K}
For any $\omega>0$, there exists  $\bar \lambda\in\mathbb{R}$ such that  $r(\mathcal{K}_{\bar\lambda})=1$.

\end{lemma}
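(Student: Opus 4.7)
The plan is to reduce the statement to an intermediate--value argument for an associated principal--eigenvalue function of $\lambda$. From the proof of Lemma~\ref{lemma:eigenfunctions.K} and Remark~\ref{remark:eigenfunctions}, the identity $\mathcal{K}_{\bar\lambda}\Phi=\Phi$ is equivalent to $\mathbf{L}_{\mathbf{F}-\bar\lambda\mathbf{M}}\Phi=\omega\Phi$. By Krein--Rutman together with the strong positivity of non--negative solutions established in Lemma~\ref{Lem23.lin}, a positive eigenfunction for the second problem must correspond to the \emph{principal} eigenvalue of $\mathbf{L}_{\mathbf{F}-\bar\lambda\mathbf{M}}$. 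Consequently, $r(\mathcal{K}_{\bar\lambda})=1$ is equivalent to
\begin{equation*}
\sigma(\bar\lambda):=\Sigma[\mathbf{L}_{\mathbf{F}-\bar\lambda\mathbf{M}};\Omega]=\omega,
\end{equation*}
and the lemma reduces to showing that the map $\lambda\mapsto\sigma(\lambda)$ attains every positive value.

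Next I would establish continuity and strict monotonicity of $\sigma$ through the variational (Rayleigh) characterisation associated with the symmetric bilinear form~\eqref{bilinear_form}, with $\mathbf{F}$ replaced by $\mathbf{F}-\lambda\mathbf{M}$, namely
\begin{equation*}
\sigma(\lambda)=\inf_{\Phi\in\mathcal{H}_\Gamma\setminus\{0\}}\frac{\|\nabla\Phi\|_{\mathcal{L}}^{2}+\langle(\mathbf{F}-\lambda\mathbf{M})\Phi,\Phi\rangle_{\mathcal{L}}+\mu\int_{\Gamma}(\phi_{2}-\phi_{1})^{2}}{\|\Phi\|_{\mathcal{L}}^{2}}.
\end{equation*}
For each fixed $\Phi\neq 0$, this quotient is an affine, strictly decreasing function of $\lambda$ (because $\mathbf{M}>0$), so taking the infimum produces a concave, strictly decreasing and continuous function $\sigma:\mathbb{R}\to\mathbb{R}$.

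To close the argument, I would analyse the limit behaviour of $\sigma$. As $\lambda\to-\infty$, the contribution $-\lambda\langle\mathbf{M}\Phi,\Phi\rangle_{\mathcal{L}}$ is bounded below by $|\lambda|(\inf_{i}\inf_{\Omega_{i}} m_i)\|\Phi\|_{\mathcal{L}}^{2}$, hence $\sigma(\lambda)\to+\infty$; conversely, evaluating the Rayleigh quotient at a fixed test function $\Phi_{0}\in\mathcal{H}_\Gamma$ with $\Phi_{0}\succeq 0$ shows that $\sigma(\lambda)\to-\infty$ as $\lambda\to+\infty$. The intermediate value theorem then yields $\bar\lambda\in\mathbb{R}$ with $\sigma(\bar\lambda)=\omega>0$, which is the conclusion.

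The main technical point that has to be checked is the validity of the Rayleigh characterisation in the interface setting, in particular that the coercivity of the bilinear form on $\mathcal{H}_\Gamma$ and the non--negativity of the membrane contribution $\mu\int_{\Gamma}(\phi_{2}-\phi_{1})^{2}$ allow the classical spectral theory for compact self--adjoint operators to be applied to $\mathbf{L}_{\mathbf{F}-\lambda\mathbf{M}}$. This rests on the compact embedding~\eqref{eq:embedding} and the trace inequality stated in Section~\ref{subsect:functional_setting}, both of which are already at our disposal, so the argument goes through exactly as in the classical scalar logistic setting.
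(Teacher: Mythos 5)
Your argument is correct, but it follows a genuinely different route from the paper. The paper never passes to a variational characterisation: it works directly with the spectral radius $r(\mathcal{K}_\lambda)$, establishes that it is continuous and increasing in $\lambda$ by operator monotonicity ($\mathcal{K}_{\lambda_1}<\mathcal{K}_{\lambda_2}$ for $\lambda_1<\lambda_2$), computes $r(\mathcal{K}_0)=(\Lambda+\omega)/(\Lambda+\Sigma[\mathbf{L_F}])$, and then forces $r(\mathcal{K}_\lambda)$ across the value $1$ by bounding $\mathcal{K}_\lambda$ above (or below) with an operator built from the positive (or negative) part of $(\omega+1-\Sigma[\mathbf{L_F}])\mathbf{I}+\lambda\mathbf{M}$ and letting $\lambda\to\mp\infty$. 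You instead reduce to the scalar equation $\Sigma[\mathbf{L}_{\mathbf{F}-\lambda\mathbf{M}};\Omega]=\omega$ and run an intermediate-value argument on the Rayleigh quotient; this is cleaner, yields concavity and strict monotonicity of $\lambda\mapsto\Sigma[\mathbf{L}_{\mathbf{F}-\lambda\mathbf{M}};\Omega]$ for free, and avoids the comparison of spectral radii. What it costs is generality: your route exploits the self-adjointness of the form \eqref{bilinear_form} (symmetric membrane condition, no drift), whereas the Krein--Rutman route survives the non-symmetric interface conditions the authors claim their results extend to. Two points in your sketch should be made explicit. First, to identify the variational minimum with the \emph{principal} eigenvalue you must check that the minimiser can be replaced by its absolute value; the only non-standard term is the membrane one, and there the inequality $\bigl(|\phi_2|-|\phi_1|\bigr)^2\leq(\phi_2-\phi_1)^2$ does the job, after which Lemma~\ref{Lem23.lin} upgrades the non-negative minimiser to a strongly positive eigenfunction and Krein--Rutman converts $\Sigma[\mathbf{L}_{\mathbf{F}-\bar\lambda\mathbf{M}};\Omega]=\omega$ into $r(\mathcal{K}_{\bar\lambda})=1$. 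Second, your lower bound as $\lambda\to-\infty$ uses $\operatorname{ess\,inf}_{\Omega_i}m_i>0$; this is implicit in the paper as well (it divides by $m=\inf\{m_1,m_2\}$ in Section~\ref{Section_asym}), but it is worth stating, since ``positive $L^\infty$'' alone does not guarantee it.
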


\begin{proof}
First of all observe that, since $(\omega+\Lambda)\mathbf{I}+\lambda\mathbf{M}$ is an increasing function of $\lambda$, we have that, as long as~\eqref{eq:condition.Lambda} holds,
$\mathcal{K}_{\lambda_1}<\mathcal{K}_{\lambda_2}$ for $\lambda_1<\lambda_2$. Hence, see~\cite{Am2} and \cite{PJ3},  $r(\mathcal{K}_\lambda)$ is a continuous,  increasing function of $\lambda$.

Let us assume that $\omega>\Sigma[\mathbf{L_F}]$, where $\Sigma[\mathbf{L_F}]$ is the principal eigenvalue of~\eqref{eq:system.aux.eigenvalue}.
Then, for  $\lambda=0$ and using~\eqref{eq:pbm.K}, we get that
\begin{equation}
  \label{eq:spectral.radius.lambda0}r(\mathcal{K}_0)=\frac{\Lambda+\omega}{\Lambda+\Sigma[\mathbf{L_F}]}>1.
\end{equation}
 We claim that there exists $\lambda<0$, with $|\lambda|$ large, such that $r(\mathcal{K}_\lambda)<1$. By continuity and monotonicity of
 the spectral radius this yields the desire conclusion in the case  $\omega>\Sigma[\mathbf{L_F}]$.

To prove such a claim we follow the ideas in~\cite{Lo3}.
Since
$$
\begin{aligned}
(\omega+\Lambda)\mathbf{I}+\lambda\mathbf{M}&=(\omega+\Sigma[\mathbf{L_F}]-1+\Lambda+1-\Sigma[\mathbf{L_F}])\mathbf{I}+\lambda\mathbf{M}\\
&\leq
(\Lambda+\Sigma[\mathbf{L_F}]-1)\mathbf{I}+\underbrace{\big((\omega+1-\Sigma[\mathbf{L_F}])\mathbf{I}+\lambda\mathbf{M}\big)^+}_{:=\mathbf{A}^+},
\end{aligned}
$$where $\mathbf{A}^+$ is defined as the positive part of the matrix, see Section~\ref{S1}, it yields to
$$r(\mathcal{K}_\lambda)<\hat r,$$
being $\hat r$ the spectral radius of the operator $(\mathbf{L_F}+\Lambda\mathbf{I})^{-1}\big((\Lambda+\Sigma[\mathbf{L_F}]-1)\mathbf{I}+\mathbf{A}^+\big)$. Hence, the claim
follows if we show that $\hat r<1$ for $\lambda<0$, with $|\lambda|$ large.

By assumption, the weights $m_i$ are assumed to be two positive and bounded functions. Thus, we have that
$$
\lim_{\lambda\to-\infty}\big((\omega+1-\Sigma[\mathbf{L_F}])\mathbf{I}+\lambda\mathbf{M}\big)^+=\mathbf{0}.
$$
Let us denote by $\overline{r}$ the spectral radius of $(\mathbf{L_F}+\Lambda\mathbf{I})^{-1}\big((\Lambda+\Sigma[\mathbf{L_F}]-1)\mathbf{I}\big)$. It is easy to see that
$$
\overline{r}=\frac{\Lambda+\Sigma[\mathbf{L_F}]-1}{\Lambda+\Sigma[\mathbf{L_F}]}<1.
$$
Finally, we have $\lim_{\lambda\to-\infty} \hat r=  \overline{r}<1$, and we conclude that there exists $\bar \lambda<0$ such that $r(\mathcal{K}_{\bar \lambda})=1.$

Notice that if $0<\omega<\Sigma[\mathbf{L_F}]$, then~\eqref{eq:spectral.radius.lambda0} becomes $r(\mathcal{K}_0)<1$. We can repeat the same
proof by considering the negative part and taking the limit as $\lambda \to\infty$ to get $\bar r>1$. By continuity if $\omega=\Sigma[\mathbf{L_F}]$ then $\bar\lambda=0$.
\end{proof}

\begin{corollary}
\label{lemma:omega}
 For any $\omega>0$, there exists $\bar\lambda\in\mathbb{R}$ such that $\omega$ is the principal eigenvalue of~\eqref{eq:aux.con.omega}.
\end{corollary}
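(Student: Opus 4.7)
The plan is to chain together the three preceding results: Lemma~\ref{lemma:spectral.radius.K}, Lemma~\ref{lemma:eigenfunctions.K}, and Remark~\ref{remark:eigenfunctions}. The key observation is that the corollary is essentially a re-reading of Lemma~\ref{lemma:spectral.radius.K} once one unwinds the chain of equivalences among the three eigenvalue problems~\eqref{13},~\eqref{eq:aux.con.omega} and~\eqref{eq:pbm.K}.

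First, fix $\omega>0$. By Lemma~\ref{lemma:spectral.radius.K}, there exists $\bar\lambda\in\mathbb{R}$ such that $r(\mathcal{K}_{\bar\lambda})=1$, and one may choose the constant $\Lambda$ large enough that condition~\eqref{eq:condition.Lambda} holds at $\lambda=\bar\lambda$, so that the operator $\mathcal{K}_{\bar\lambda}$ in~\eqref{operatorK_lam} is well defined. By Lemma~\ref{lemma:eigenfunctions.K} this spectral radius is the principal eigenvalue of~\eqref{eq:pbm.K} at $\lambda=\bar\lambda$, and is simple; therefore there exists $\Phi\succeq 0$ with $\mathcal{K}_{\bar\lambda}\Phi=\Phi$. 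In fact, Lemma~\ref{Lem23.lin} upgrades this to $\Phi\gg 0$, but only non-negativity is needed to invoke Definition~\ref{def_princeig}.

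Next, I translate the identity $\mathcal{K}_{\bar\lambda}\Phi=\Phi$ back through the definition~\eqref{operatorK_lam}. Applying $(\mathbf{L}_{\mathbf{F}}+\Lambda\mathbf{I})$ to both sides and rearranging gives
\[
(\mathbf{L}_{\mathbf{F}}+\Lambda\mathbf{I})\Phi=(\bar\lambda\mathbf{M}+(\Lambda+\omega)\mathbf{I})\Phi,
\]
so that $\mathbf{L}_{\mathbf{F}-\bar\lambda\mathbf{M}}\Phi=\omega\Phi$; that is, $\Phi$ solves~\eqref{eq:aux.con.omega} with eigenvalue $\omega$. This is exactly the equivalence recorded in Remark~\ref{remark:eigenfunctions}: the pair $(\sigma=1,\Phi)$ for~\eqref{eq:pbm.K} corresponds to the pair $(\omega,\Phi)$ for~\eqref{eq:aux.con.omega}.

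Finally, because $\Phi\succeq 0$ (indeed strongly positive), Definition~\ref{def_princeig} tells us that $\omega$ is the principal eigenvalue of $\mathbf{L}_{\mathbf{F}-\bar\lambda\mathbf{M}}$, which is what we wanted. The only substantive content of the proof is thus the existence of $\bar\lambda$, and that has already been settled in Lemma~\ref{lemma:spectral.radius.K}; there is no genuine obstacle here beyond keeping track of the three equivalent formulations and verifying that the sign condition~\eqref{eq:condition.Lambda} can always be arranged by taking $\Lambda$ sufficiently large.
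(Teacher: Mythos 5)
Your proposal is correct and follows essentially the same route as the paper: it chains Lemma~\ref{lemma:spectral.radius.K} (existence of $\bar\lambda$ with $r(\mathcal{K}_{\bar\lambda})=1$), Remark~\ref{remark:eigenfunctions} (transfer of eigenfunctions between~\eqref{eq:pbm.K} and~\eqref{eq:aux.con.omega}), and Lemma~\ref{lemma:eigenfunctions.K} (the spectral radius is the principal, simple eigenvalue), merely spelling out the algebraic translation more explicitly than the paper does.
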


\begin{proof}
Lemma~\ref{lemma:spectral.radius.K} implies that given any $\omega>0$ we can find $\bar \lambda$ such that $\omega$ is an eigenvalue for the operator $\mathbf{L}_{\mathbf{F}-\bar\lambda\mathbf{ M}}$, see~\eqref{eq:aux.con.omega}. The eigenfunctions corresponding to $\omega$ are the same as the eigenfunctions corresponding to $r(\mathcal{K}_{\bar\lambda})$; see Remark~\ref{remark:eigenfunctions}. We conclude the proof by using Lemma~\ref{lemma:eigenfunctions.K}
so that $\omega=\Sigma[\mathbf{L}_{\mathbf{F}-\bar\lambda\mathbf{M}};\Omega]$.
\end{proof}

\begin{proof}[Proof of Theorem~{\rm\ref{thm:principal.eigenvalue}}]
 As we have mentioned above, the problem of analysing the existence of
 a principal eigenvalue of problem~\eqref{13} is equivalent to the problem of finding a zero principal eigenvalue for~\eqref{eq:aux.con.omega}. That is, finding zeros for the function
\begin{equation*}
\label{Sigma_function}
\Sigma(\lambda):=\Sigma[\mathbf{L}_{\mathbf{F}-\lambda\mathbf{M}} ;\Omega].
\end{equation*}
It can be seen that this function is real analytic, continuous,
decreasing and $ \Sigma(\lambda)\to-\infty$ as $\lambda\to\infty$; see~\cite{Am2} and~\cite{Santi} for further details. As a consequence of the previous results, we have that $\Sigma(0)>0$. Thus, there exists $\lambda_1>0$, such that $\Sigma(\lambda_1)=0$, which implies that $\lambda_1$ is an eigenvalue for~\eqref{13}. It is moreover the principal eigenvalue and, as $r(\mathcal{K}_\lambda)$ is simple, so is $\lambda_1$.
\end{proof}

\section{Asymptotic behaviour of a spatially heterogeneous linear problem}
\label{Section_asym}

In this section we ascertain the asymptotic behaviour of a parameter dependent linear elliptic eigenvalue problem that will be crucial in the sequel. In particular, the limiting
problem obtained in this section will provide us with an eigenfunction used in the following sections to characterise the existence of positive solutions by
the method of sub and supersolutions.

Given a real parameter $\alpha$, we consider
the linear weighted elliptic eigenvalue problem
\be
\label{asylin}
\mathbf{L}_{\alpha \mathbf{A}}\Phi_\alpha =\lambda_\alpha {\bf M} \Phi_\alpha
\end{equation}
{or equivalently
\begin{equation}
  \label{eq:asylin.split}
 \left(-\Delta+\alpha a_i(x)\right) \varphi_{i,\alpha}=\lambda_\alpha m_i(x) \varphi_{i,\alpha},\qquad \text{in\ \  }\Omega_i
\end{equation}
 together with the boundary conditions given by~\eqref{opbound}.

 Here, under the
framework explained in the previous section, we assume
that the principal eigenfunction $\Phi_\alpha$ is normalized, so that
\begin{equation}
\label{eq:normalized.phi}
\int_{\Omega_1}m_1(x) \varphi_{1,\alpha}^2+\int_{\Omega_2} m_2(x) \varphi_{2,\alpha}^2=1.
\end{equation}
Moreover, since $\Phi_\alpha\succeq 0$ we can actually conclude that $\lambda_\alpha\geq 0$.   In other words,
$\lambda_\alpha$ is the value such that we have an eigenfunction of the operator
$\mathbf{L}_{\alpha \mathbf{A} -\lambda_\alpha {\bf M}}$ corresponding to  the (principal) eigenvalue zero, i.e.
$$\Sigma[\mathbf{L}_{\alpha \mathbf{A} -\lambda_\alpha {\bf M}};\Omega]=0.$$
We also observe that due to the monotonicity of the principal eigenvalue with respect to the domain (see \cite{Am05} or \cite{Santi})
$$0=\Sigma[\mathbf{L}_{\alpha \mathbf{A} -\lambda_\alpha {\bf M}};\Omega]<\Sigma[\mathbf{L}_{-\lambda_\alpha {\bf M}};\Omega_0],$$
where $\Omega_0=\Omega_0^{a_1}\cup \Omega_0^{a_2}$.
 Furthermore, let us consider the uncoupled problem
$$
\left\{ \begin{array}{l@{\quad}l@{\quad\quad \text{and} \quad\quad }l@{\quad}l}
 -\Delta \varphi_{1,0}=\sigma_1 \varphi_{1,0}   &  \hbox{in}\quad \Omega_0^{a_1},& \varphi_{1,0}=0   &\hbox{on}\quad\partial\Omega_0^{a_1},\\[4pt]
-\Delta \varphi_{2,0}=\sigma_2 \varphi_{2,0}  & \hbox{in}\quad \Omega_0^{a_2},& \varphi_{2,0}=0  &\hbox{on}\quad\partial\Omega_0^{a_2},
\end{array}\right.
$$
where $\sigma_i:=\sigma[-\Delta;\Omega_0^{a_i}]$ denotes the principal eigenvalue for each equation. We define
$$\sigma[\mathbf{L};\Omega_0] =\inf\{\sigma_1, \sigma_2\}, $$ which is simple and positive. Note that the  actual value of $\Sigma[\mathbf{L};\Omega_0]$ depends only on the size of the subdomains $\Omega_0^{a_1}$ and $\Omega_0^{a_2}$. Now, applying the monotonicity of the principal eigenvalue with respect to the potential we find that
$$\Sigma[\mathbf{L}_{-\lambda_\alpha {\bf M}};\Omega_0] \leq \sigma[\mathbf{L};\Omega_0] -\lambda_\alpha m,$$
where $m:=\inf\{m_1(x)|_{\Omega_1}, m_2(x)|_{\Omega_2}\}$.
Therefore, we have the following estimation for the eigenvalues $\lambda_\alpha$,
\begin{equation}
\label{eq:bound.lambda}
\lambda_\alpha<\frac{\sigma[\mathbf{L};\Omega_0]}{m}.
\end{equation}
Thanks to the monotonicity of the principal eigenvalue with respect to the potential we know that the eigenvalues $\lambda_\alpha$ are increasing
in terms of the parameter $\alpha$ and due to \eqref{eq:bound.lambda} bounded above.
Therefore, for a sufficiently big $\alpha$ we can say that the eigenvalues $\lambda_\alpha$ are strictly positive.

The following result will be of great importance in the proof of the main result of this section.

\begin{lemma}
\label{LemCot}
For each fixed $\alpha>0$, let $(\lambda_\alpha,\Phi_\alpha)$ be a solution to~\eqref{asylin}.
Then
\begin{equation*}
\label{cotas}
\begin{split}
	\|\Phi_\alpha\|_{\mathcal{H}_\Gamma}\leq \lambda_\alpha, \quad  \mu \int_{\Gamma} (\varphi_{2,\alpha}- \varphi_{1,\alpha})^2\leq \lambda_\alpha, \quad \alpha \left(\int_{\Omega_1} a_1 \varphi_{1,\alpha}^2 +  \int_{\Omega_2} a_2 \varphi_{2,\alpha}^2\right)\leq \lambda_\alpha.
	\end{split}
\end{equation*}
\end{lemma}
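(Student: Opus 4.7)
The plan is to test the eigenvalue equation against the eigenfunction itself and integrate by parts, using the normalization~\eqref{eq:normalized.phi} to pick up $\lambda_\alpha$ on the right-hand side. The three stated inequalities will then fall out simultaneously once the boundary contribution on $\Gamma$ is rearranged as the $\mu$-weighted squared jump.

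Concretely, I would start from the split form~\eqref{eq:asylin.split}, multiply the equation on $\Omega_i$ by $\varphi_{i,\alpha}$, integrate over $\Omega_i$ and apply Green's identity to get
\begin{equation*}
\int_{\Omega_i}|\nabla\varphi_{i,\alpha}|^2 - \int_{\partial\Omega_i}\varphi_{i,\alpha}\,\frac{\partial \varphi_{i,\alpha}}{\partial \mathbf{n}_i} + \alpha\int_{\Omega_i}a_i\,\varphi_{i,\alpha}^2 = \lambda_\alpha\int_{\Omega_i}m_i\,\varphi_{i,\alpha}^2.
\end{equation*}
The Dirichlet condition on $\Gamma_i$ kills the boundary integral there, leaving only the $\Gamma$ contribution. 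Since $\mathbf{n}_2=-\mathbf{n}_1$ on $\Gamma$, the interface condition~\eqref{opbound} yields $\partial_{\mathbf{n}_2}\varphi_{2,\alpha}=-\mu(\varphi_{2,\alpha}-\varphi_{1,\alpha})$, so summing the two identities the boundary terms combine as
\begin{equation*}
-\int_\Gamma \mu(\varphi_{2,\alpha}-\varphi_{1,\alpha})(\varphi_{1,\alpha}-\varphi_{2,\alpha}) = \mu\int_\Gamma (\varphi_{2,\alpha}-\varphi_{1,\alpha})^2.
\end{equation*}

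Putting everything together and invoking the normalization~\eqref{eq:normalized.phi} gives the single identity
\begin{equation*}
\|\Phi_\alpha\|^2_{\mathcal{H}_\Gamma} + \mu\int_\Gamma(\varphi_{2,\alpha}-\varphi_{1,\alpha})^2 + \alpha\left(\int_{\Omega_1}a_1\varphi_{1,\alpha}^2+\int_{\Omega_2}a_2\varphi_{2,\alpha}^2\right) = \lambda_\alpha.
\end{equation*}
Because every term on the left-hand side is non-negative (recall $a_i\ge 0$, $\mu>0$ and $\lambda_\alpha>0$ for $\alpha$ large by~\eqref{eq:bound.lambda} and the discussion preceding it), each of the three summands is individually bounded by $\lambda_\alpha$, which yields the three bounds in the statement (with $\|\Phi_\alpha\|_{\mathcal{H}_\Gamma}^2\leq \lambda_\alpha$ in the first estimate).

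The only slightly delicate point is the sign bookkeeping on $\Gamma$: one must be careful to use the outward normal of each $\Omega_i$ when integrating by parts and then re-express $\partial_{\mathbf{n}_2}\varphi_{2,\alpha}$ via $\mathbf{n}_1$ in order to apply the symmetric interface condition. Once this is done, the proof is a one-line consequence of the resulting energy identity and the normalization.
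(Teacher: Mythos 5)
Your proof is correct and is essentially the same as the paper's: both test~\eqref{asylin} against $\Phi_\alpha$, convert the interface term into $\mu\int_\Gamma(\varphi_{2,\alpha}-\varphi_{1,\alpha})^2$, and read off the three bounds from the resulting energy identity together with the normalization~\eqref{eq:normalized.phi}. Your observation that the identity actually yields $\|\Phi_\alpha\|^2_{\mathcal{H}_\Gamma}\leq\lambda_\alpha$ (the squared norm) is accurate, and the paper's displayed identity confirms it.
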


\begin{proof}
Multiplying~\eqref{asylin} by $\Phi_{\alpha}\in\mathcal{H}_\Gamma$
and integrating by parts we get, for the left-hand side of~\eqref{asylin}
\begin{align*}
\langle &\mathbf{L}_{\alpha {\mathbf{A}}} \Phi_\alpha,\Phi_\alpha\rangle_\mathcal{L}\\&=	\int_{\Omega_1} |\nabla \varphi_{1,\alpha}|^2  -\int_{\Gamma} \varphi_{1,\alpha} \frac{\partial \varphi_{1,\alpha}}{\partial \mathbf{n}_1}
	+\int_{\Omega_2}  |\nabla \varphi_{2,\alpha}|^2  +\int_{\Gamma} \varphi_{2,\alpha} \frac{\partial \varphi_{2,\alpha}}{\partial \mathbf{n}_1}   + \alpha \left(\int_{\Omega_1} a_1 \varphi_{1,\alpha}^2 +  \int_{\Omega_2} a_2 \varphi_{2,\alpha}^2\right)\\
&=\int_{\Omega_1} |\nabla \varphi_{1,\alpha}|^2 +\int_{\Omega_2}  |\nabla \varphi_{2,\alpha}|^2
+\int_{\Gamma}  \frac{\partial \varphi_{1,\alpha}}{\partial \mathbf{n}_1}(\varphi_{2,\alpha}-\varphi_{1,\alpha})
	+ \alpha \left(\int_{\Omega_1} a_1 \varphi_{1,\alpha}^2 +  \int_{\Omega_2} a_2 \varphi_{2,\alpha}^2\right).
\end{align*}
Moreover, due to the inside boundary condition, the previous integral over $\Gamma$ can be written as
$$
\int_{\Gamma}  \frac{\partial \varphi_{1,\alpha}}{\partial \mathbf{n}_1}(\varphi_{2,\alpha}-\varphi_{1,\alpha}) =\mu \int_{\Gamma}  (\varphi_{2,\alpha}-\varphi_{1,\alpha})^2.
$$
For the right-hand side of~\eqref{asylin}, we get, since we are considering a normalized $\Phi_\alpha$,
$$
\lambda_\alpha\langle {\bf M}\Phi_\alpha,\Phi_\alpha\rangle_\mathcal{L}=\lambda_\alpha.
$$
Therefore,
\begin{equation*}
\label{varphi_alpha}
\int_{\Omega_1} |\nabla \varphi_{1,\alpha}|^2 +\int_{\Omega_2}  |\nabla \varphi_{2,\alpha}|^2 + \alpha \left(\int_{\Omega_1} a_1 \varphi_{1,\alpha}^2 +  \int_{\Omega_2} a_2 \varphi_{2,\alpha}^2\right)
+\mu\int_{\Gamma}  (\varphi_{2,\alpha}-\varphi_{1,\alpha})^2 =\lambda_\alpha,
\end{equation*}
and we conclude the result.
\end{proof}

Subsequently we want to analyse the asymptotic behaviour of~\eqref{asylin}, when the parameter $\a$ goes to infinity. To do so, we consider also  the following limit uncoupled problem
\begin{equation}
\label{limaspo}
\left\{ \begin{array}{l@{\quad}l@{\quad\quad \text{and} \quad\quad }l@{\quad}l}
 -\Delta \varphi_{1,\infty}=\lambda_\infty m_1(x)\varphi_{1,\infty}   &  \hbox{in}\quad \Omega_0^{a_1},& \varphi_{1,\infty}=0   &\hbox{on}\quad\partial\Omega_0^{a_1},\\[4pt]
-\Delta \varphi_{2,\infty}= \lambda_\infty m_2(x) \varphi_{2,\infty}  & \hbox{in}\quad \Omega_0^{a_2},& \varphi_{2,\infty}=0  &\hbox{on}\quad\partial\Omega_0^{a_2},
\end{array}\right.
\end{equation}
where
\begin{equation}
\label{limaspo2}\lambda_\infty:=\inf\{\lambda^{m_1}[-\Delta,\Omega_0^{a_1}],\lambda^{m_2}[-\Delta,\Omega_0^{a_2}]\},
\end{equation}
stands for the principal eigenvalue of problem \eqref{limaspo} associated with the normalized  eigenfunction
$\Phi_\infty\in H_0^1(\Omega_0^{a_1})\times H_0^1(\Omega_0^{a_2})$, with
\begin{equation}
\label{zero_+}\varphi_{1,\infty} =0\quad \hbox{in} \quad \Omega\setminus \Omega_0^{a_1}\quad \hbox{and}\quad
\varphi_{2,\infty} =0\quad \hbox{in} \quad \Omega\setminus \Omega_0^{a_2},\end{equation} and
$\lambda^{m_{i}}[-\Delta,\Omega_0^{a_i}]$ are the  principal eigenvalues of  $-\Delta \varphi_{i}=\lambda m_{i}(x)\varphi_{i}$
in the domain $\Omega_0^{a_i}$ under homogeneous Dirichlet boundary conditions.
{Recall that, thanks to the definition of eigenfunction, we allow that one of the components of $\Phi_\infty$ is equal to zero, so that~\eqref{limaspo} may have a trivial equation}.

\begin{theorem}
\label{limprob}
Let $\Omega_0^{a_1}$ and $\Omega_0^{a_2}$ be  two non-empty subdomains and let $(\lambda_\alpha,\Phi_\alpha)$ and $(\lambda_\infty,\Phi_\infty)$ be a solution to~\eqref{asylin} and~\eqref{limaspo} respectively.
Then, Problem \eqref{asylin}
converges to the  limiting problem~\eqref{limaspo} when the parameter $\alpha$ goes to infinity in the sense that $\lambda_\alpha\to\lambda_\infty$ and
$\Phi_\alpha\to \Phi_\infty$ in $\mathcal{H}_{\Gamma}$.
\end{theorem}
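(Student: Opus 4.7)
The strategy is to combine the uniform a priori bounds from Lemma~\ref{LemCot} with a Rayleigh quotient upper bound on $\lambda_\alpha$, pass to a weak limit, and identify it with the principal eigenpair of the limit problem.

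First I would establish a two-sided control on the family $\{\lambda_\alpha\}$. From~\eqref{eq:bound.lambda} (and the monotonicity of $\lambda_\alpha$ in $\alpha$ already discussed) we know that $\lambda_\alpha$ is non-decreasing and bounded above. To pin the upper bound precisely, I use the variational (Rayleigh quotient) characterisation of the principal eigenvalue of the coupled operator $\mathbf{L}_{\alpha\mathbf{A}}$, namely
\begin{equation*}
\lambda_\alpha=\min_{\Psi\in\mathcal{H}_\Gamma\setminus\{0\}}
\frac{\|\nabla\Psi\|_{\mathcal{L}}^{2}+\alpha\int_{\Omega_1}a_1\psi_1^{2}+\alpha\int_{\Omega_2}a_2\psi_2^{2}+\mu\int_\Gamma(\psi_2-\psi_1)^{2}}{\int_{\Omega_1}m_1\psi_1^{2}+\int_{\Omega_2}m_2\psi_2^{2}}.
\end{equation*}
Testing with the extension by zero of $\Phi_\infty$ (say, if the infimum in~\eqref{limaspo2} is attained on $\Omega_0^{a_1}$, use $(\varphi_{1,\infty},0)^T$; this pair lies in $\mathcal{H}_\Gamma$ because $\overline{\Omega_0^{a_1}}\subset\Omega_1$ so the trivial flux condition on $\Gamma$ is met), and using that $a_i\equiv0$ on $\Omega_0^{a_i}$, yields $\lambda_\alpha\leq \lambda_\infty$ for every $\alpha$. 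Therefore $\lambda_\alpha\nearrow \lambda^{*}$ for some $\lambda^{*}\leq\lambda_\infty$.

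Next I would extract a limit eigenfunction. By Lemma~\ref{LemCot}, $\|\Phi_\alpha\|_{\mathcal{H}_\Gamma}^{2}\leq \lambda_\alpha\leq\lambda_\infty$, so up to a subsequence $\Phi_\alpha\rightharpoonup\Phi^{*}$ weakly in $\mathcal{H}_\Gamma$, strongly in $\mathcal{L}$ and in $\mathcal{L}_\Gamma$, by the embeddings~\eqref{eq:embedding}. The third bound in Lemma~\ref{LemCot} gives $\int_{\Omega_i}a_i\varphi_{i,\alpha}^{2}\leq\lambda_\alpha/\alpha\to0$, and since $a_i>0$ on $\Omega_i\setminus\Omega_0^{a_i}$, Fatou's lemma forces $\varphi_i^{*}\equiv 0$ on $\Omega_i\setminus\Omega_0^{a_i}$; as $\varphi_i^{*}\in H^{1}(\Omega_i)$ this implies $\varphi_i^{*}\in H_0^{1}(\Omega_0^{a_i})$ (extended by zero). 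Passing to the limit in the weak formulation of~\eqref{asylin} against test functions $\Psi\in C_c^{\infty}(\Omega_0^{a_1})\times C_c^{\infty}(\Omega_0^{a_2})$ (on which the penalisation $\alpha\mathbf{A}\Psi$ is identically zero) shows that $\Phi^{*}$ satisfies the two uncoupled equations in~\eqref{limaspo} with eigenvalue $\lambda^{*}$. The normalisation~\eqref{eq:normalized.phi} passes to the limit thanks to strong $\mathcal{L}$ convergence, so $\Phi^{*}\not\equiv0$, and $\Phi^{*}\succeq 0$ by pointwise a.e.\ convergence of a further subsequence.

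Then the identification $\lambda^{*}=\lambda_\infty$ follows: since $\Phi^{*}\succeq 0$ is a non-trivial eigenfunction of the uncoupled problem on $\Omega_0^{a_1}\cup\Omega_0^{a_2}$ corresponding to $\lambda^{*}$, the component that does not vanish identically must be (up to a positive multiple) the principal eigenfunction of $-\Delta$ in the corresponding $\Omega_0^{a_i}$ against weight $m_i$, which forces $\lambda^{*}\geq \lambda_\infty$ by~\eqref{limaspo2}; combined with $\lambda^{*}\leq\lambda_\infty$ from the Rayleigh quotient, equality holds. Simplicity of $\lambda_\infty$ (inherited from the scalar principal eigenvalue on $\Omega_0^{a_i}$) and the $\mathcal{L}$-normalisation~\eqref{eq:normalized.phi} together identify $\Phi^{*}=\Phi_\infty$ uniquely, so the whole family converges, not just a subsequence. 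Finally, strong convergence in $\mathcal{H}_\Gamma$ is obtained from the energy identity proved in Lemma~\ref{LemCot},
\begin{equation*}
\|\Phi_\alpha\|_{\mathcal{H}_\Gamma}^{2}+\alpha\!\int_{\Omega_1}\!a_1\varphi_{1,\alpha}^{2}+\alpha\!\int_{\Omega_2}\!a_2\varphi_{2,\alpha}^{2}+\mu\!\int_\Gamma(\varphi_{2,\alpha}-\varphi_{1,\alpha})^{2}=\lambda_\alpha,
\end{equation*}
which gives $\limsup\|\Phi_\alpha\|_{\mathcal{H}_\Gamma}^{2}\leq\lambda_\infty=\|\Phi_\infty\|_{\mathcal{H}_\Gamma}^{2}$, while weak lower semicontinuity gives the matching liminf; combined with weak convergence this yields $\Phi_\alpha\to\Phi_\infty$ strongly in $\mathcal{H}_\Gamma$, and as a by-product the penalisation and interface defects $\alpha\int a_i\varphi_{i,\alpha}^{2}$ and $\mu\int_\Gamma(\varphi_{2,\alpha}-\varphi_{1,\alpha})^{2}$ both tend to zero.

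The main delicate point is the identification step: one has to be careful that the limit function is allowed to have one component identically zero (consistent with our definition of principal eigenfunction $\Phi\succeq 0$), which is exactly what permits matching the infimum in~\eqref{limaspo2} without requiring both principal eigenvalues on $\Omega_0^{a_1}$ and $\Omega_0^{a_2}$ to coincide.
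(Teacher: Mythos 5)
Your proposal is correct and shares the same skeleton as the paper's proof (a priori bounds from Lemma~\ref{LemCot}, compactness, localisation of the limit in $\Omega_0^{a_1}\cup\Omega_0^{a_2}$ via the vanishing of $\alpha\int a_i\varphi_{i,\alpha}^2$, and passage to the limit in the weak formulation against $\mathcal{C}_0^\infty(\Omega_0^{a_1})\times\mathcal{C}_0^\infty(\Omega_0^{a_2})$ test functions), but it diverges in two substantive places, both arguably cleaner than the paper's route. First, you obtain the sharp upper bound $\lambda_\alpha\le\lambda_\infty$ by inserting the zero-extension of $\Phi_\infty$ into the Rayleigh quotient, whereas the paper only derives the cruder bound~\eqref{eq:bound.lambda} via monotonicity of the principal eigenvalue and identifies $\ell=\lambda_\infty$ only at the very end (and rather tersely); your two-sided squeeze ($\lambda^*\le\lambda_\infty$ from the quotient, $\lambda^*\ge\lambda_\infty$ because a nontrivial nonnegative eigenfunction of the uncoupled Dirichlet problem must be principal) makes that identification explicit, and it also yields convergence of the whole family rather than a subsequence. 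You should, however, add one sentence justifying the variational characterisation: the paper constructs the principal eigenvalue via Krein--Rutman, not variationally, so you need to observe that the bilinear form associated with $\mathbf{L}_{\alpha\mathbf{A}}$ is symmetric and that the Rayleigh minimiser can be taken componentwise nonnegative (replacing $\Psi$ by $(|\psi_1|,|\psi_2|)^T$ does not increase the quotient since $\bigl||\psi_2|-|\psi_1|\bigr|\le|\psi_2-\psi_1|$ on $\Gamma$), hence coincides with the principal eigenvalue. Second, for the strong $\mathcal{H}_\Gamma$ convergence you use the energy identity together with weak lower semicontinuity and the computation $\|\Phi_\infty\|^2_{\mathcal{H}_\Gamma}=\lambda_\infty$, while the paper runs a direct Cauchy-sequence computation on $D_{n,m}$ exploiting the monotonicity of $\alpha_n$ and $\lambda_{\alpha_n}$; your norm-plus-weak-convergence argument is shorter and additionally delivers the vanishing of the penalisation and interface defect terms for free. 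The only other point to flesh out is the step from ``$\varphi_i^*=0$ a.e.\ outside $\Omega_0^{a_i}$'' to ``$\varphi_i^*\in H_0^1(\Omega_0^{a_i})$'', which requires regularity/stability of $\partial\Omega_0^{a_i}$; the paper handles this by invoking the Babu\v{s}ka--V\'yborn\'y stability of the subdomains, and you should cite the same fact rather than treat it as automatic.
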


Before we state the proof of this theorem, let us discuss, through different possible situations, the limiting behaviour of the eigenvalue $\lambda_\infty$ depending on the geometrical
configuration of the vanishing subdomains $\Omega_0^{a_1}$ and $\Omega_0^{a_2}$.

Assume first that
\begin{equation}
\label{eq:case_2}
\lambda_\infty=\lambda^{m_1}[-\Delta,\Omega_0^{a_1}]=\lambda^{m_2}[-\Delta,\Omega_0^{a_2}].
\end{equation}
Then, each equation~\eqref{limaspo} is satisfied with positive uncoupled eigenfunctions that concentrate in the corresponding subdomain of
{``more than enough resources"}, $\Omega_0^{a_i}$:
\begin{equation*}
  \varphi_{1,\infty}> 0 \quad \hbox{in}\quad \Omega_0^{a_1}\quad \hbox{and}\quad \varphi_{2,\infty} > 0 \quad \hbox{in}\quad
  \Omega_{0}^{a_2}.
\end{equation*}
On the other hand, if we assume that, for example,
\begin{equation}
\label{eq:case_1}
\lambda_\infty=\lambda^{m_1}[-\Delta,\Omega_0^{a_1}]<\lambda^{m_2}[-\Delta,\Omega_0^{a_2}],
\end{equation}
(which might happen, for instance, if $\Omega_0^{a_2}$ is bigger than $\Omega_0^{a_1}$ and $m_1=m_2$ at every single point), then the limit eigenfunction concentrates in $\Omega_0^{a_1}$ being zero in the rest of $\Omega$, i.e. $\Phi_\infty$ is defined by
$$\varphi_{1,\infty}>0 \quad \hbox{in}\quad \Omega_0^{a_1} \quad \hbox{and}\quad \varphi_{2,\infty}=0 \quad \hbox{in}\quad \Omega_2.$$
In general, we can conclude that for limit function
\begin{equation}
\label{eq:case_1_gen}
\Phi_\infty=\left\{\begin{array}{ll} (\varphi_{1,\infty},0)^T & \hbox{if}\quad \lambda^{m_1}[-\Delta,\Omega_0^{a_1}]<\lambda^{m_2}[-\Delta,\Omega_0^{a_2}],\\
(0,\varphi_{2,\infty})^T & \hbox{if}\quad \lambda^{m_2}[-\Delta,\Omega_0^{a_2}]<\lambda^{m_1}[-\Delta,\Omega_0^{a_1}].\end{array}\right.
\end{equation}
Observe that the limit eigenfunction pair is just given in the corresponding domain $\Omega_i$ while the limit eigenfunction $\Phi_\infty$ will be zero on both components in $\Omega\setminus\Omega_i$
respectively.

\begin{proof}[Proof of Theorem~{\rm\ref{limprob}}]
Let $\{\alpha_n\}_{n\geq 1}$ be an increasing unbounded sequence.
For each $\alpha_n$ we take the
normalised  principal eigenfunctions $\Phi_{\alpha_n}$ of~\eqref{asylin} with principal eigenvalue $\lambda_{\alpha_n}$.
Note that then, thanks to Lemma\;\ref{LemCot} and since the coefficients $m_1$ and $m_2$ are two  positive $L^\infty$-functions, the norms for the eigenfunctions $\Phi_{\alpha_n}$ are bounded in $\mathcal{H}_\Gamma$. Moreover,
since the embedding $\mathcal{H}_\Gamma\hookrightarrow \mathcal{L}$
is compact, there exist a subsequence of $\{\alpha_n\}_{n\geq 1}$,
again labeled by $n$, and $\Phi_\infty\in\mathcal{L}$ such that
\begin{equation*}
 \lim_{n\to\infty} \|\Phi_{\alpha_n}-\Phi_\infty\|_{\mathcal{L}}=0,
\end{equation*}
strongly and weakly in $\mathcal{H}_\Gamma$.
Thus, we can extract a subsequence, again
labelled by $\{\Phi_{\alpha_n}\}$, weakly convergent
in $\mathcal{H}_\Gamma$ and strongly in $\mathcal{L}$
to some function $\Phi_\infty$.

Next, we will prove that $\{\Phi_{\alpha_n}\}_{n\geq 1}$ is  actually a
Cauchy sequence in $\mathcal{H}_\Gamma$. This implies that $\Phi_\infty\in \mathcal{H}_\Gamma$
and
\begin{equation}
\label{b45}
    \lim_{n\to \infty} \|\Phi_{\alpha_n}-\Phi_\infty\|_{\mathcal{H}_\Gamma}=0.
\end{equation}
{Indeed, let $n<m$ so that $0<\alpha_n<\alpha_m$. We define
$$D_{n,m}:=\|\nabla(\Phi_{\alpha_n}-\Phi_{\alpha_m})\|_\mathcal{L}=\underbrace{\int_{\Omega_1} \left|\nabla (\varphi_{1,\alpha_n} - \varphi_{1,\alpha_m})\right|^2}_{D_{n,m}^1}
  +  \underbrace{\int_{\Omega_2} \left|\nabla (\varphi_{2,\alpha_n} - \varphi_{2,\alpha_m})\right|^2}_{D_{n,m}^2}.$$
Using separately each of the equations in system \eqref{eq:asylin.split} and taking into account  the boundary conditions \eqref{opbound} it gives that
\begin{align*}
      D_{n,m}^1    =&
      \int_{\Omega_1} \left|\nabla\varphi_{1,\alpha_n}\right|^2+
    \int_{\Omega_1}\left|\nabla\varphi_{1,\alpha_m}\right|^2- 2
    \int_{\Omega_1}\left\langle \nabla\varphi_{1,\alpha_n},\nabla\varphi_{1,\alpha_m}\right\rangle\\
         =&\int_{\Omega_1} ( \lambda_{\alpha_n} m_1\varphi_{1,\alpha_n}-
    \alpha_n a_1 \varphi_{1,\alpha_n}) \varphi_{1,\alpha_n} +\mu \int_{\Gamma} (\varphi_{2,\alpha_n}-\varphi_{1,\alpha_n}) \varphi_{1,\alpha_n}
    \\  &+ \int_{\Omega_1} ( \lambda_{\alpha_m} m_1\varphi_{1,\alpha_m}
    -\alpha_m a_1 \varphi_{1,\alpha_m}) \varphi_{1,\alpha_m} +\mu \int_{\Gamma} (\varphi_{2,\alpha_m}-\varphi_{1,\alpha_m}) \varphi_{1,\alpha_m}
    \\ & -2 \int_{\Omega_1} ( \lambda_{\alpha_n} m_1\varphi_{1,\alpha_n}
    -\alpha_n a_1 \varphi_{1,\alpha_n}) \varphi_{1,\alpha_m} -2\mu \int_{\Gamma} (\varphi_{2,\alpha_n}-\varphi_{1,\alpha_n}) \varphi_{1,\alpha_m}.
\end{align*} 
The term $D_{n,m}^2$ is similar.
Thus, rearranging terms, we are driven to
\begin{align*}
      D_{n,m}  &  =  \lambda_{\alpha_n}\int_{\Omega_1}  m_1\varphi_{1,\alpha_n}(\varphi_{1,\alpha_n}-\varphi_{1,\alpha_m})
    +\lambda_{\alpha_n}\int_{\Omega_2} m_2\varphi_{2,\alpha_n} (\varphi_{2,\alpha_n}-\varphi_{2,\alpha_m})
    \\ & \hspace{0.5cm} + \lambda_{\alpha_m}\int_{\Omega_1} m_1\varphi_{1,\alpha_m}
    (\varphi_{1,\alpha_m}-\varphi_{1,\alpha_n})
     + \lambda_{\alpha_m}\int_{\Omega_2} m_2\varphi_{2,\alpha_m}(\varphi_{2,\alpha_m}-\varphi_{2,\alpha_n})
      \\ & \hspace{0.5cm}  +  (\lambda_{\alpha_m} - \lambda_{\alpha_n})\int_{\Omega_1} m_1\varphi_{1,\alpha_n} \varphi_{1,\alpha_m}
      + (\lambda_{\alpha_m} -   \lambda_{\alpha_n}) \int_{\Omega_2} m_2\varphi_{2,\alpha_n} \varphi_{2,\alpha_m}+R_{n,m}+F_{n,m},
\end{align*}
where we have denoted by $F_{n,m}$ the sum of all the terms involving integrals over $\Gamma$ and $R_{n,m}$ are the remaining terms (terms involving integrals in $\Omega_i$ and the potentials $a_i$).
Since $\{\alpha_n\}$ is increasing we have
$$
  R_{n,m}  =\sum_{i=1,2}\left(
    - \alpha_n \int_{\Omega_i} a_i (\varphi_{i,\alpha_n}-\varphi_{i,\alpha_m})^2-(\alpha_m-\alpha_n)\int_{\Omega_i} a_i \varphi_{i,\alpha_m}^2\right)
     \leq 0.
$$
On the other hand,
\begin{align*}
  F_{n,m}  &= -\mu \int_{\Gamma} (\varphi_{2,\alpha_n}- \varphi_{1,\alpha_n})^2-\mu \int_{\Gamma}(\varphi_{2,\alpha_m}- \varphi_{1,\alpha_m})^2 +2\mu\int_\Gamma (\varphi_{2,\alpha_n}- \varphi_{1,\alpha_n}) (\varphi_{2,\alpha_m}- \varphi_{1,\alpha_m})\\
  &=-\mu\int_\Gamma \left((\varphi_{2,\alpha_n}- \varphi_{1,\alpha_n}) - (\varphi_{2,\alpha_m}- \varphi_{1,\alpha_m})\right)^2\leq 0.
  \end{align*}
In analysing the terms where the eigenvalues $\lambda_{\alpha_n}$ are involved we take into consideration that the sequence $\{\lambda_{\alpha_n}\}$ is increasing,
thanks to the monotonicity of the principal eigenvalue with respect to the potential and bounded above due to the estimation \eqref{eq:bound.lambda}. Hence and
subsequently, applying H\"{o}lder's inequality, {the order of the sequence $\{\lambda_{\alpha_n}\}$ and the upper bound for the eigenvalues $\lambda_{\alpha_n}$}
$${D_{n,m} \leq C \left[\int_{\Omega_1} \left( \varphi_{1,\alpha_n} - \varphi_{1,\alpha_m}\right)^2
+  \int_{\Omega_2} \left(\varphi_{2,\alpha_n} - \varphi_{2,\alpha_m}\right)^2\right],\quad \hbox{with $C$ a positive constant}.}$$
Therefore, \eqref{b45} is satisfied and we have that the limit function $\Phi_\infty\geq 0$ is normalised in the sense of~\eqref{eq:normalized.phi}.
}

The next step consists in showing that $\Phi_\infty$ is indeed a solution to~\eqref{limaspo}.
First, we claim that~\eqref{zero_+} holds; that is, $\varphi_{i,\infty}=0$ outside $\Omega_0^{a_i}$.
In fact, thanks to Lemma~\ref{LemCot} it follows that
\begin{equation*}
   \lim_{n\to \infty} \left(\int_{\Omega_1}  a_1\varphi^2_{1,\alpha_n}+
   \int_{\Omega_2} a_2 \varphi^2_{2,\alpha_n}\right)=0.
\end{equation*}
Moreover, since the sets $\Omega_i$ are disjoint, we have that each of the integrals above is equal to zero and hence, due to the normalization of $\Phi_\infty$ and
after applying H\"{o}lder's inequality, we find that
that
{\begin{align*}
  \left| \int_{\Omega_i} a_i \varphi_{i,\alpha_n}^2 - \int_{\Omega_i} a_i \varphi_{i,\infty}^2 \right| &
  \leq \int_{\Omega_i} a_i(\varphi_{i,\alpha_n}+\varphi_{i,\infty})|\varphi_{i,\alpha_n}-\varphi_{i,\infty} | \\ & \leq
  \max_{\bar \Omega_i} a_i \cdot \|\varphi_{i,\alpha_n}\!-\!\varphi_{i,\infty}\|_{L^2(\O_i)}\left(
  \int_{\Omega_i} \varphi_{i,\alpha_n}^2\! +\! \varphi_{i,\infty}^2\! +\! 2  \varphi_{i,\alpha_n}\varphi_{i,\infty}
  \right)^{1/2} \\ & \leq 2 C \max_{\bar \Omega_i} a_i
  \cdot \|\varphi_{i,\alpha_n}-\varphi_{i,\infty}\|_{L^2(\Omega_i)}\to 0,
\end{align*}
where $C$ is a positive constant which depends on the coefficients $m_i$.} Therefore,
\begin{equation*}
 \int_{\Omega_1} a_1\varphi^2_{1,\infty}+
   \int_{\Omega_2} a_2 \varphi^2_{2,\infty}=0,
\end{equation*}
which concludes the proof of \eqref{zero_+}, since $a_i$ are nonnegative and identically zero in $\Omega_0^{a_i}$.

Consequently, we show that
\eqref{zero_+} implies that
\begin{equation*}
\label{313}
  \Phi_\infty \in H_0^1(\Omega_{0}^{a_1})\times
  H_0^1(\Omega_{0}^{a_2}).
\end{equation*}
Indeed for a sufficiently small $\delta>0$,
consider the open sets
\begin{equation}
\label{eq:set.omega.delta}
  \Omega_{\delta}^{a_i}:=\{\,x\in\Omega_i\;:\;\mathrm{dist}(x,\Omega_{0}^{a_i})<\delta\,\}.
\end{equation}
According to \eqref{zero_+},
\begin{equation*}
  \Phi_\infty \in H_0^1(\Omega_{\delta}^{a_1})\times
  H_0^1(\Omega_{\delta}^{a_2}),
\end{equation*}
and, hence, there exists $\delta_0>0$ such that
\begin{equation*}
\Phi_\infty \in \bigcap_{0<\delta<\delta_0}
 \left(H_0^1(\Omega_{\delta}^{a_1})\times
  H_0^1(\Omega_{\delta}^{a_2})\right).
\end{equation*}
On the other hand, since $\Omega_{0}^{a_1}$ and $\Omega_{0}^{a_2}$ are smooth
subdomains of $\Omega$, they are stable in the sense of Babuska and
V\'yborn\'y \cite{BV} and,
therefore,
\begin{equation*}
  H_0^1(\Omega_{0}^{a_1})\times
  H_0^1(\Omega_{0}^{a_2})= \bigcap_{0<\delta<\delta_0}
  \left(H_0^1(\Omega_{\delta}^{a_1})\times
  H_0^1(\Omega_{\delta}^{a_2})\right).
\end{equation*}
Finally, we pass to the limit in the weak formulation of \eqref{asylin} to show that $\Phi_\infty$ is indeed a (weak) solution of~\eqref{limaspo}. To this aim, consider the test function
\begin{equation*}
  \Upsilon=(\upsilon_1,\upsilon_2)^T \in\mathcal{C}_0^{\infty}(\Omega_{0}^{a_1})
  \times \mathcal{C}_0^{\infty}(\Omega_{0}^{a_2}).
\end{equation*}
Multiplying \eqref{asylin} (assuming $\alpha=\alpha_n$, with $n>1$) by $\Upsilon$ and
integrating by parts  gives rise to
$$
    \int_{\Omega_{0}^{a_i}} \nabla \upsilon_i \cdot \nabla \varphi_{i,\alpha_n}
        = \lambda_{\alpha_n} \int_{\Omega_{0}^{a_i}} \upsilon_i m_i \varphi_{i,\alpha_n}.
$$
Consequently, passing to the limit as $n\to \infty$, it is
apparent that $\Phi_\infty$ provides us with a
weak solution of the uncoupled problem
\begin{equation*}
\label{limit:alpha_problem}
-\Delta \varphi_{1,\infty}=\ell m_1(x) \varphi_{1,\infty}  \quad   \hbox{in}\quad \Omega_0^{a_1},\qquad\hbox{and}\qquad
  -\Delta \varphi_{2,\infty}= \ell m_2(x)\varphi_{2,\infty} \quad \hbox{in}\quad \Omega_0^{a_2},
\end{equation*}
together with~\eqref{zero_+} and
$
\varphi_{i,\infty}\geq 0$ in $\Omega_0^{a_i}$ and where
$\ell=\lim_{n\to \infty} \lambda_{\alpha_n}$.
Moreover, by elliptic regularity  $\Phi_\infty$ is indeed a classical solution $ \mathcal{C}^{2,\eta}(\overline\Omega_{0}^{a_1})
  \times \mathcal{C}^{2,\eta}(\overline\Omega_{0}^{a_2})$.
Therefore, $\ell=\lambda_\infty.$
\end{proof}


\section{Existence of solutions for the Non-degenerate case}

\label{sec:nondegenerate}

\noindent In this section we characterise the
existence and uniqueness of positive solutions $U$ of problem~\eqref{eq:main.compact}  in terms of the parameter $\lambda$ in the case in which
the coefficients of the non-linear terms $a_i$ are strictly positive at every point of the domain $\Omega_i$.
To this aim we define $\lambda^*$ as
the principal eigenvalue of the problem
\begin{equation}
\label{eq:eigenvalue_lambda}
\mathbf{L}\Phi= \lambda^* \mathbf{M}\Phi  \qquad \text{ in } \Omega,
\end{equation}
under the boundary conditions~\eqref{opbound}, whose existence was analysed in Section\;\ref{subsect:eigenvalue_problem}. Note that $\lambda^*$ can also be understood
  as the value such that the principal eigenvalue $\Sigma\left[\mathbf{L}_{-\lambda \mathbf{M}};\Omega\right]$ is $0$.

\begin{theorem}
\label{Theouni}
Let 
$\Omega_0^{a_1}=\Omega_0^{a_2}=\emptyset$. For any $\lambda$, problem~\eqref{eq:main.compact}
admits a unique positive solution $U\in {\mathcal C}^{2,\eta}_\Gamma(\overline\Omega)$
if and only if
\be
\label{lambin}
\lambda >\lambda^*.
\ee
\end{theorem}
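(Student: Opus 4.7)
The plan is to handle the two implications separately. For the necessity of \eqref{lambin}, let $U\succ 0$ solve \eqref{eq:main.compact} and let $\Phi^*\gg 0$ be the principal eigenfunction of \eqref{eq:eigenvalue_lambda} provided by Theorem \ref{thm:principal.eigenvalue}. Testing the equation against $\Phi^*$ in $\mathcal{L}$ and integrating by parts, the terms involving $\Gamma$ produce on both sides the symmetric quadratic form $\mu\int_\Gamma(u_2-u_1)(\varphi_2^*-\varphi_1^*)$ (since $U,\Phi^*\in\mathcal{H}_\Gamma$), so they cancel after subtracting the corresponding test of the eigenvalue problem against $U$. What remains is
\begin{equation*}
(\lambda-\lambda^*)\,\langle\mathbf{M}U,\Phi^*\rangle_{\mathcal{L}}=\langle\mathbf{A}U^p,\Phi^*\rangle_{\mathcal{L}}.
\end{equation*}
Under the non-degeneracy hypothesis $a_i>0$ everywhere and with $U\succ 0$, $\Phi^*\gg 0$, the right-hand side is strictly positive, forcing $\lambda>\lambda^*$.

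For the sufficiency, I would apply monotone iteration within the sub-and-supersolution framework adapted to $\mathcal{H}_\Gamma$, using the linear theory of Section \ref{subsect:eigenvalue_problem}. A subsolution is $\underline U=\varepsilon\Phi^*$: a direct computation gives $\mathbf{L}\underline U-\lambda\mathbf{M}\underline U+\mathbf{A}\underline U^{p}=\varepsilon(\lambda^*-\lambda)\mathbf{M}\Phi^*+\varepsilon^{p}\mathbf{A}(\Phi^*)^{p}$, which is $\preceq 0$ provided $\varepsilon^{p-1}\max_i\|a_i(\varphi_i^*)^{p-1}\|_\infty\leq (\lambda-\lambda^*)\min_i\inf_{\Omega_i} m_i$; this is possible precisely because $\lambda>\lambda^*$. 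The membrane and outer Dirichlet conditions are automatic since $\Phi^*\in\mathcal{H}_\Gamma$. A supersolution is the constant pair $\overline U=(K,K)^T$ with $K^{p-1}\geq\lambda\max_i\|m_i\|_\infty/\min_i\min_{\overline{\Omega}_i}a_i$; the interface condition is trivial since $\overline u_2-\overline u_1\equiv 0$ on $\Gamma$, and $\overline U\geq 0$ on $\Gamma_i$. Choosing $\varepsilon$ small gives $\underline U\preceq\overline U$. Iterating
\begin{equation*}
(\mathbf{L}+\Lambda\mathbf{I})U_{n+1}=\Lambda U_n+\lambda\mathbf{M}U_n-\mathbf{A}U_n^{p},
\end{equation*}
with $\Lambda$ large enough that $s\mapsto\Lambda s+\lambda m_i s-a_i s^p$ is increasing on $[0,K]$ in each component, and invoking the continuity, compactness and positivity of the resolvent (Lemma \ref{compacteness} together with the comparison result that follows it) produces a monotone sequence converging in $\mathcal{H}_\Gamma$. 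Elliptic regularity then upgrades the limit to a classical positive solution in $\mathcal{C}^{2,\eta}_\Gamma(\overline\Omega)$.

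For uniqueness I would use a sweeping argument exploiting the super-linearity $p>1$. Given two positive solutions $U_1,U_2$, Lemma \ref{Lem23.lin} (applied to each component as a solution of a linear problem with potential $a_iu_i^{p-1}$) gives $U_1,U_2\gg 0$. Set $\tau^*=\sup\{\tau\geq 0:\tau U_1\preceq U_2\}\in(0,\infty)$. If $\tau^*<1$, then $(\tau^*U_1)^p=\tau^{*p}U_1^p\prec\tau^*U_1^p$, so $W:=U_2-\tau^*U_1$ satisfies $\mathbf{L}W-\lambda\mathbf{M}W+\mathbf{A}(U_2^{p-1}+\cdots+(\tau^*U_1)^{p-1})W=\tau^*(1-\tau^{*p-1})\mathbf{A}U_1^p\succ 0$, together with the interface condition inherited from $\mathcal{H}_\Gamma$. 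The strong positivity implied by the resolvent forces $W\gg 0$, contradicting maximality of $\tau^*$. Hence $\tau^*\geq 1$, i.e.\ $U_2\succeq U_1$, and symmetry yields $U_1=U_2$.

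The main obstacle, present in both the monotone iteration and the sweeping step, is ensuring that strict positivity and the Hopf-type sign information are available on the interface $\Gamma$, which is not a Dirichlet boundary but a Kedem--Katchalsky membrane. This is precisely the role of Lemma \ref{Lem23.lin} and Definition \ref{def:strongly.positive}: they provide the interface-compatible replacement for the classical strong maximum principle and Hopf lemma, and every verification above must be routed through them rather than through boundary arguments on $\partial\Omega$.
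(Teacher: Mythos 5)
Your proposal is correct and follows essentially the same route as the paper: the paper's proof of Theorem~\ref{Theouni} is given only by reference to the sub-and-supersolution method of \cite{AC-LG2}, and your construction (subsolution $\varepsilon\Phi^*$, large constant supersolution, monotone iteration through the positive compact resolvent, and a sweeping argument for uniqueness) is a correct, detailed instantiation of exactly that method in the interface setting. The only minor difference is that you derive the necessity of $\lambda>\lambda^*$ by testing against the principal eigenfunction and exploiting the symmetry of the bilinear form, whereas the paper's analogous argument (see the proof of Theorem~\ref{Theouni2}) uses the monotonicity of the principal eigenvalue with respect to the potential; both are valid and lead to the same conclusion.
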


\begin{remark}
\label{rem.lambda*}
  Observe that condition~\eqref{lambin} is equivalent to $\Sigma\left[\mathbf{L}_{-\lambda \mathbf{M}};\Omega\right]<0$, since the function
  $\Sigma(\lambda)=\Sigma\left[\mathbf{L}_{-\lambda \mathbf{M}};\Omega\right]$ is continuous and decreasing with respect to $\lambda$, see~\cite{PJ3} for similar problems.
  \end{remark}

\begin{proof}
The proof, that uses the method of sub and supersolutions, see \cite{Am1}, \cite{Am2} for further details, follows similar arguments to those shown in \cite[Theorem 3.7]{AC-LG2} so we omit the details.
\end{proof}

In the next result we prove that there exists a branch of positive solutions of~\eqref{eq:main.compact} emanating from the trivial solution $(\lambda,{ U})=(\lambda,{ 0})$
at $\lambda=\lambda^*$. This, indeed, means that $\lambda^*$ is a bifurcation point to
a smooth curve of solutions of~\eqref{eq:main.compact},~\cite{CR}. Moreover, since there is no other bifurcation point, this branch of positive solutions goes to infinity,~\cite{R1}.
Additionally, we will show that the solutions are monotone with respect to the parameter $\lambda$.

\begin{theorem}
\label{Th2.0} For any $\lambda >\lambda^*$, let $\theta(\lambda):= U_\lambda$ be the unique positive solution of problem~\eqref{eq:main.compact}. Then, the map
\begin{equation*}
\label{maplam}
  \theta: \left(\lambda^*,\infty\right) \  \longrightarrow \
  \mathcal{C}_\Gamma (\Omega),\text{\quad such that\quad}
  \l \  \mapsto \
   \theta(\lambda)
\end{equation*}
is of class $\mathcal{C}^1$.
Moreover, $\theta(\lambda)> \theta(\eta)$ if $\lambda>\eta
> \lambda^*$, and $\theta$ bifurcates from $(\lambda,{ U})=(\lambda,{ 0})$ at
$\lambda= \lambda^*$, i.e.,
\begin{equation}
\label{410}
  \lim_{\lambda \downarrow\lambda^*} \theta(\lambda)={ 0}.
\end{equation}
\end{theorem}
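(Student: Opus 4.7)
The plan is to handle the three assertions separately: $\mathcal{C}^1$ regularity via the Implicit Function Theorem, monotonicity via the sub- and supersolution characterization of $U_\lambda$ given by Theorem~\ref{Theouni}, and the bifurcation from zero by a monotone-limit argument combined with the strict monotonicity of the principal eigenvalue with respect to the potential.

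For the smoothness of $\theta$, I would define
\begin{equation*}
\mathcal{F}:\mathbb{R}\times \mathcal{C}^{2,\eta}_\Gamma(\overline\Omega)\longrightarrow \mathcal{C}^{0,\eta}_\Gamma(\overline\Omega),\qquad \mathcal{F}(\lambda,U):=\mathbf{L}U-\lambda\mathbf{M}U+\mathbf{A}U^{p},
\end{equation*}
so that $\mathcal{F}(\lambda,\theta(\lambda))=0$. The partial derivative with respect to $U$ at $(\lambda,U_\lambda)$ equals the linear operator $\mathbf{L}_{-\lambda\mathbf{M}+p\mathbf{A}U_\lambda^{p-1}}$. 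Since $U_\lambda$ itself satisfies $(\mathbf{L}_{-\lambda\mathbf{M}+\mathbf{A}U_\lambda^{p-1}})U_\lambda=0$ with $U_\lambda\succeq 0$, Definition~\ref{def_princeig} yields $\Sigma[\mathbf{L}_{-\lambda\mathbf{M}+\mathbf{A}U_\lambda^{p-1}};\Omega]=0$. The fact that $p>1$ and $a_i>0$ together with the strong positivity of $U_\lambda$ (Lemma~\ref{Lem23.lin}) gives a strict ordering $p\mathbf{A}U_\lambda^{p-1}>\mathbf{A}U_\lambda^{p-1}$, so the monotonicity of the principal eigenvalue with respect to the potential produces
\begin{equation*}
\Sigma[\mathbf{L}_{-\lambda\mathbf{M}+p\mathbf{A}U_\lambda^{p-1}};\Omega]>0.
\end{equation*}
This makes $D_U\mathcal{F}(\lambda,U_\lambda)$ an isomorphism between the corresponding Schauder spaces, so the Implicit Function Theorem delivers a $\mathcal{C}^1$-curve of solutions locally, which by uniqueness must agree with $\theta$.

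For the monotonicity, let $\lambda^*<\eta<\lambda$ and consider $U_\eta$ as a candidate function for the $\lambda$-problem. Since $\mathbf{M}U_\eta>0$ componentwise, one has
\begin{equation*}
\mathbf{L}U_\eta-\lambda\mathbf{M}U_\eta+\mathbf{A}U_\eta^{p}=(\eta-\lambda)\mathbf{M}U_\eta<0,
\end{equation*}
so $U_\eta$ is a strict subsolution of the $\lambda$-problem. Combined with a sufficiently large positive constant supersolution, the method of sub- and supersolutions and the uniqueness of the positive solution (Theorem~\ref{Theouni}) force $U_\eta\leq U_\lambda$. Applying the strong maximum principle and Hopf's Lemma to the difference $W:=U_\lambda-U_\eta\geq 0$, which satisfies a linear homogeneous interface problem of the form~\eqref{mainlinear}--\eqref{mainlinearbdy} with a bounded potential, upgrades this to the strict inequality $\theta(\lambda)>\theta(\eta)$.

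The bifurcation statement~\eqref{410} is the step I expect to be the most delicate one. By the monotonicity just established, $U_\lambda$ is non-increasing as $\lambda\downarrow\lambda^*$ and bounded below by $0$, so $U_\lambda\to U_*\succeq 0$ pointwise, with $U_*\in\mathcal{L}$ by dominated convergence. A uniform $L^\infty$ bound (taking $U_{\lambda_0}$ as supersolution for any fixed $\lambda_0>\lambda^*$) together with elliptic regularity applied to each subdomain then upgrades the convergence to the $\mathcal{C}^{2,\eta}_\Gamma(\overline\Omega)$ topology along subsequences, allowing us to pass to the limit and conclude that $U_*$ solves~\eqref{eq:main.compact} at $\lambda=\lambda^*$. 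If $U_*\not\equiv 0$, Lemma~\ref{Lem23.lin} makes $U_*$ strongly positive, so $(\mathbf{L}_{-\lambda^*\mathbf{M}+\mathbf{A}U_*^{p-1}})U_*=0$ forces $\Sigma[\mathbf{L}_{-\lambda^*\mathbf{M}+\mathbf{A}U_*^{p-1}};\Omega]=0$. On the other hand, the definition of $\lambda^*$ in~\eqref{eq:eigenvalue_lambda} gives $\Sigma[\mathbf{L}_{-\lambda^*\mathbf{M}};\Omega]=0$, and adding the strictly positive potential $\mathbf{A}U_*^{p-1}$ raises the principal eigenvalue strictly, yielding $\Sigma[\mathbf{L}_{-\lambda^*\mathbf{M}+\mathbf{A}U_*^{p-1}};\Omega]>0$, a contradiction. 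Hence $U_*\equiv 0$, and since the limit does not depend on the subsequence, $\theta(\lambda)\to 0$ in $\mathcal{C}_\Gamma(\Omega)$ as $\lambda\downarrow\lambda^*$.
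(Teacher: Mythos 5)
Your proposal is correct and establishes everything the theorem actually asserts, but two of your three steps follow a genuinely different route from the paper. The $\mathcal{C}^1$ part is essentially the same in both: the key computation is identical, namely $\Sigma[\mathbf{L}_{-\lambda \mathbf{M}+\mathbf{A}U_\lambda^{p-1}};\Omega]=0$ for the solution itself, upgraded by $p>1$ and monotonicity of the principal eigenvalue to $\Sigma[\mathbf{L}_{-\lambda \mathbf{M}+p\mathbf{A}U_\lambda^{p-1}};\Omega]>0$, hence invertibility of the linearisation; you phrase this between Schauder spaces while the paper works with the compact perturbation of the identity $\mathbf{I}-\mathbf{L}^{-1}(\lambda\mathbf{M}-\mathbf{A}U^{p-1})$ and Fredholm index zero, which is only a cosmetic difference. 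For the monotonicity the paper instead differentiates the branch, obtains $\mathbf{L}_{-\lambda\mathbf{M}+p\mathbf{A}U_\lambda^{p-1}}\,\theta'(\lambda)=\mathbf{M}U_\lambda$, and invokes the characterisation of the maximum principle to conclude $\theta'(\lambda)>0$; your sub/supersolution-plus-uniqueness argument (with the large constant supersolution, which does work since $a_i$ is bounded away from zero in the non-degenerate case) is more elementary and does not presuppose differentiability of the branch, at the cost of needing the strong comparison principle for the interface system to pass from $U_\eta\leq U_\lambda$ to strict inequality. For the limit~\eqref{410} the paper invokes Crandall--Rabinowitz, verifying the transversality condition $\mf{L}_1\Phi_*\notin R[\mf{L}_{\lambda^*}]$ by integration by parts, and then appeals to uniqueness and to Rabinowitz's global bifurcation theorem; this buys the local smooth-curve structure and an unbounded global branch, which are stronger structural facts than the theorem literally states. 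Your monotone-limit argument, ruling out a nontrivial limit $U_*$ at $\lambda=\lambda^*$ because a positive solution would force $\Sigma[\mathbf{L}_{-\lambda^*\mathbf{M}+\mathbf{A}U_*^{p-1}};\Omega]=0$ while strict monotonicity in the potential gives $>0$, is self-contained, mirrors the necessity argument of Theorem~\ref{Theouni2}, and proves exactly the stated limit; it just does not recover the extra bifurcation-theoretic structure the paper's proof provides. The only point worth making explicit in your write-up is that the strict increase of the principal eigenvalue under the perturbation $\mathbf{A}U_*^{p-1}$ persists even if one component of $U_*$ were to vanish identically, which holds because the potential still increases on a set of positive measure where the principal eigenfunction is positive.
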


\begin{proof}
Let us consider the operator  $\mf{F}: \mathbb{R} \times \mathcal{C}_\Gamma (\overline \Omega) \longrightarrow
 \mathcal{C}_\Gamma (\overline \Omega),$
defined by
$$
  \mf{F}(\lambda,{ U}):= (\mathbf{I}-  \mathbf{L}^{-1}{(\lambda \mathbf{M}-\mathbf{A}{ U}^{p-1})}){ U},$$
where $\mathbf{L}^{-1}$ denotes the \lq\lq inverse laplacian matrix\rq\rq, denoted by~\eqref{inverse.lap}. We know that $\mf{F}$ is of class
$\mathcal{C}^1$ and, by elliptic regularity, $\mf{F}(\lambda,\cdot)$
is a compact perturbation of the identity for every $\lambda\in\mathbb{R}$.

We observe that, for each fixed $\lambda>\lambda^*$, if $U_\lambda\in\mathcal{C}_\Gamma(\overline\Omega)$ is the positive solution to~\eqref{eq:main.compact}, then $\mf{F}(\lambda,U_\lambda)=~0$.
Moreover,  since any non-negative solution turns out to be strongly positive, we have
\begin{equation}
\label{413}
  0 = \Sigma[\mathbf{L}_{-\lambda \mathbf{M}+\mathbf{A}{ U}_\lambda^{p-1}};\Omega].
\end{equation}
Differentiating  $\mf{F}$ with respect to ${ U}$, we have that,
for every ${ U}\in \mc{C}_{\Gamma}(\overline \Omega)$,
\begin{equation*}
  D_{{ U}} \mf{F}(\lambda,{ U}_\lambda){U}  = \big({\bf I}- \mathbf{L}^{-1}{(\lambda \mathbf{M}-p\mathbf{A}{ U_\lambda}^{p-1})} \big){ U},
\end{equation*}
and, in particular, as consequence of the compactness of the inverse operator $\mathbf{L}^{-1}{\mathbf{M}}$, we get that $D_{{ U}} \mf{F}(\lambda,{ U}_\lambda)$ is a Fredh\"{o}lm
operator of index zero, since  it is a compact perturbation of the
identity map, see~\cite{Bre}. Moreover, we claim that it is injective, and hence  a linear topological isomorphism. Indeed, let ${ U}\in \mc{C}_{\Gamma}(\overline \Omega)$ such
that
$$\big(\mathbf{I}-\mathbf{L}^{-1}{(\lambda\mathbf{M}-p\mathbf{A}U_\lambda^{p-1})}\big)U=0.
$$
Then, by elliptic regularity, ${ U}\in \mc{C}_{\Gamma}^{2,\eta}(\overline \Omega)$
and, we have that
\begin{equation}
  \label{eq:inyective}
  \mathbf{L}_{-\lambda\mathbf{M}+p\mathbf{A}U_\lambda^{p-1}}U=0.
\end{equation}
On the other hand, owing to the monotonicity of the principal
eigenvalue with respect to the potential,~\cite{Santi}, and since $p>1$, we find from \eqref{413} that
\begin{equation}
\label{eq:monotonicity.potential}
  \Sigma[\mathbf{L}_{-\lambda \mathbf{M}+p\mathbf{A}{ U}_\lambda^{p-1}};\Omega]>0
\end{equation}
which implies, together with~\eqref{eq:inyective} that  ${ U}={0}$, and hence, that $D_{{ U}} \mf{F}(\lambda,{ U}_\lambda)$ is injective. Therefore, we have that $D_{{U}} \mf{F}(\lambda,{U}_\lambda)$ is a linear topological isomorphism and hence, it is invertible.

Moreover, if we differentiate the nonlinear operator $\mf{F}(\lambda,{ U_\lambda})$ with respect to the parameter $\lambda$ we have that
$$
D_{{U}} \mf{F}(\lambda,{U}_\lambda)\frac{dU_\lambda}{d\lambda}=-D_\lambda\mf{F}(\lambda,{ U_\lambda}).
$$
Applying the operator $\mathbf L$ on both sides, this latter expression yields
$$\mathbf{L}_{-\lambda \mathbf{M}+p\mathbf{A}{U_\lambda}^{p-1}} \frac{d {U}_\lambda}{d \lambda}=\mathbf{M}{U}_\lambda.$$
Therefore, since~\eqref{eq:monotonicity.potential} holds, thanks to the characterisation of the maximum principle
\cite{LM} we find that $\theta'(\lambda)=\frac{d {U}_\lambda}{d \lambda}$ is positive
and, then,  ${ U}_\lambda$ is increasing with respect to $\lambda$. Moreover, due to the uniqueness of the positive solutions and the application of the Implicit Function Theorem it follows that the map $\theta(\lambda)$ is
of class $\mathcal{C}^1$.

Finally, to analyse the bifurcation of $\theta(\lambda)$ we observe that  $\mf{F}(\l,{0})={0}$ for all $\lambda\in\mathbb{R}$ and
\begin{equation*}
  D_{{ U}} \mf{F}(\lambda,{0}){ U} = (\mathbf{I}-  \lambda \mathbf{L}^{-1}{\mathbf{M}}){ U},\qquad \lambda\in\mathbb{R},
  \;\; { U} \in  \mathcal{C}_\Gamma (\overline \Omega).
\end{equation*}
For each $\lambda\in\mathbb{R}$, we denote by $\mf{L}_\lambda$ the linear operator $D_{{U}} \mf{F}(\lambda,{0})$. Also,
$\mf{L}_\lambda$ is real analytic in $\lambda$, since it is a compact perturbation of
the identity of linear type with respect to $\lambda$. Thus, as a consequence, there exists a $\lambda_0$ such that $\mf{L}_{\lambda_0}U=0$
if and only if there is $U\neq 0$, $U\in\mathcal{C}_\Gamma (\overline \Omega)$ such that
\begin{equation}
\label{crank}
\mathbf{L}U=\lambda_0\mathbf{M}U  \quad   \hbox{in}\  \Omega \qquad {\rm and}\qquad
U=0 \quad \hbox{on}\   \partial\Omega.
\end{equation}
Thus, by definition of $\lambda^*$  we get $\lambda_0:= \lambda^*$ and
associated with it there is a unique solution $\Phi_*\succ
0$ of \eqref{crank}, up to a multiplicative constant. It is clear, then, that ${\rm Ker}[\mf{L}_{\lambda^*}]= \mathrm{span }(\Phi_*)$. Now, set
$$ \mf{L}_1:= \frac{d \mf{L_\lambda}}{d \lambda}=-\mathbf{L}^{-1}{\mathbf{M}}.$$
Moreover, the
following  condition holds:
\begin{equation}
\label{412}
  \mf{L}_1 \Phi_* \not\in {\rm R}[  \mf{L}_{\lambda^*} ],
\end{equation}
see~\cite{CR}. Indeed,
suppose by contradiction that  there exists $U\in \mc{C}_{\Gamma}(\overline \Omega)$ such that
\begin{equation*}
(\mathbf{I}-  \lambda^* \mathbf{L}^{-1}{\mathbf{M}}){ U} = -\mathbf{L}^{-1}{\mathbf{M}}\Phi_*.
\end{equation*}
Thanks to elliptic regularity we have $U \in \mc{C}_{\Gamma}^{2,\eta}(\overline \Omega)$ and
$
\mathbf{L}_{-\lambda^*\mathbf{M}}U=\mathbf{M}\Phi_*.
$
Multiplying by $\Phi_*$ and integrating by parts in $\Omega$ it follows that
\begin{align*}
\int_{\Omega_1} u_1 (-\Delta-\lambda_* m_1(x))\varphi_{1,*}-\int_{\partial\Omega_1} u_1 \frac{\p \varphi_{1,*}}{\p{\bf n}_1} & +
\int_{\Omega_2} u_2 (-\Delta-\lambda_*m_2(x))\varphi_{2,*}-\int_{\partial\Omega_2} u_2 \frac{\p \varphi_{2,*}}{\p{\bf n}_2}\\ & =-\left(\int_{\Omega_1} m_1(x)\varphi_{1,*}^2+\int_{\Omega_2} m_2(x)\varphi_{2,*}^2\right).
\end{align*}
The left hand side will be zero since it represents the weak expression for the linear eigenvalue problem \eqref{crank}
with a test function $U\in  \mc{C}_{\Gamma}^{2,\eta}(\overline \Omega)$. However, this is impossible, because the right hand side is negative.
Therefore, condition \eqref{412} holds. Consequently, according
to the main theorem of Crandall and Rabinowitz \cite{CR},
$(\lambda,{U})=(\lambda^*,{0})$ is a bifurcation point from
$(\lambda,{ U})=(\lambda,{0})$ to a smooth curve of positive solutions of
\eqref{eq:main.compact}, since $\Phi_*\succ 0$. Moreover, due to the uniqueness proved in
Theorem\,\ref{Theouni},
condition \eqref{410} holds. Finally, applying the global bifurcation theorem of Rabinowitz \cite{R1} such a smooth curve of positive solutions is actually an unbounded branch of
positive solutions since there is only one simple eigenvalue for the problem~\eqref{crank}.
\end{proof}



\section{Existence of solutions for the degenerate case}
\label{sec:degenerate}

\noindent We are now concerned with the case in which $a_1=0$ and $a_2=0$ in some open subdomains of $\Omega_1$ and $\Omega_2$; that is, we assume spatial heterogeneities such that
$\Omega_0^{a_1}\neq \emptyset$ and  $\Omega_0^{a_2}\neq \emptyset$.

\begin{definition}
We say that $\underline{U}$ is a nonnegative subsolution (respectively $\overline{U}$ is a nonnegative supersolution)
to equation \eqref{eq:main.compact} if $U\geq 0$ and
$$
\left\{\begin{array}{ll}
\mathbf L_{-\lambda\mathbf{M}-\mathbf{A}\underline U^{p-1}}\underline U\leq 0 \quad ({\rm resp.\ } \geq)& \quad \hbox{in}\quad \Omega,\\[5pt]
\underline{U}\leq 0  \quad ({\rm resp.\ } \geq), & \quad \hbox{on}\quad \partial \Omega, \\[5pt]
\dfrac{\partial \underline{u}_1}{\partial {\bf n_1}}\leq \mu( \underline{u}_2- \underline{u}_1) \quad  \dfrac{-\partial \underline{u}_2}{\partial {\bf n_1}} \leq
\mu ( \underline{u}_2- \underline{u}_1) & \quad \hbox{on}\quad \Gamma,  \quad ({\rm resp.\ } \geq).\\
\end{array}\right.
$$
\end{definition}

\begin{theorem}
\label{Theouni2}
Let $\Omega_0^{a_1}\neq\emptyset, \Omega_0^{a_2}\neq\emptyset$. Then, Problem~\eqref{eq:main.compact}
admits a unique  positive solution $U\in {\mathcal C}_\Gamma^{2,\eta}(\overline\Omega)$ if and only if
\begin{equation}
\label{lambin2}
0<\lambda^*<\lambda<\lambda_\infty,
\end{equation}
where $\lambda^*$ is given by~\eqref{eq:eigenvalue_lambda} and $\lambda_\infty$ is given by~\eqref{limaspo2}.
\end{theorem}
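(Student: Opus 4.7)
The plan is to adapt the sub/supersolution scheme used in Theorem~\ref{Theouni}, now making essential use of the asymptotic analysis of Section~\ref{Section_asym}. I would split the proof into the two necessity claims and the existence plus uniqueness claims.

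For necessity of $\lambda>\lambda^*$, the argument is identical to the non-degenerate case: pairing~\eqref{eq:main.compact} with the principal eigenfunction $\Phi_*$ of~\eqref{eq:eigenvalue_lambda} and using $\mathbf{A}U^{p-1}\geq 0$ forces $\Sigma[\mathbf{L}_{-\lambda \mathbf{M}};\Omega]<0$, equivalent to $\lambda>\lambda^*$ by Remark~\ref{rem.lambda*}. For the upper bound I would argue by contradiction: suppose a positive $U$ exists with, say, $\lambda\geq\lambda_\infty=\lambda^{m_1}[-\Delta,\Omega_0^{a_1}]$. Since $a_1\equiv 0$ on $\Omega_0^{a_1}$, the first component satisfies $-\Delta u_1=\lambda m_1 u_1$ there, while the Dirichlet eigenfunction $\varphi_{1,\infty}$ solves $-\Delta\varphi_{1,\infty}=\lambda_\infty m_1\varphi_{1,\infty}$ in the same subdomain. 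Green's identity in $\Omega_0^{a_1}$ yields
$$
(\lambda-\lambda_\infty)\int_{\Omega_0^{a_1}} m_1\, u_1\,\varphi_{1,\infty}=\int_{\partial\Omega_0^{a_1}} u_1\,\frac{\partial \varphi_{1,\infty}}{\partial \mathbf{n}}.
$$
Hopf's Lemma gives $\partial\varphi_{1,\infty}/\partial\mathbf{n}<0$, and $u_1>0$ on $\partial\Omega_0^{a_1}\subset\subset\Omega_1$, so the right-hand side is strictly negative while the left-hand side is nonnegative, a contradiction.

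For existence, fix $\lambda\in(\lambda^*,\lambda_\infty)$. The subsolution $\underline U=\varepsilon\Phi_*$ is obtained just as in Theorem~\ref{Theouni} for $\varepsilon$ small. For the supersolution I would invoke Theorem~\ref{limprob}: since $\lambda_\alpha\nearrow\lambda_\infty$, I can pick $\alpha_0$ with $\lambda_{\alpha_0}>\lambda$, and set $\overline U:=M\Phi_{\alpha_0}$. Using $\mathbf{L}\Phi_{\alpha_0}=\lambda_{\alpha_0}\mathbf{M}\Phi_{\alpha_0}-\alpha_0\mathbf{A}\Phi_{\alpha_0}$, the supersolution inequality reduces, on each $\Omega_i$, to
$$
M(\lambda_{\alpha_0}-\lambda)\,m_i\,\varphi_{i,\alpha_0}+M\,a_i\,\varphi_{i,\alpha_0}\bigl(M^{p-1}\varphi_{i,\alpha_0}^{p-1}-\alpha_0\bigr)\geq 0.
$$
On $\Omega_0^{a_i}$ the second term vanishes and the first is strictly positive; on compact subsets of $\{a_i>0\}$, where $\varphi_{i,\alpha_0}$ is bounded below by strong positivity (Lemma~\ref{Lem23.lin}), choosing $M$ large makes the bracket nonnegative. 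The interface and Dirichlet data for $\overline U$ are inherited from $\Phi_{\alpha_0}\in\mathcal{H}_\Gamma$. After arranging $\underline U\leq \overline U$ (by shrinking $\varepsilon$) the standard iterative scheme in $\mathcal{H}_\Gamma$ yields a positive weak solution, which is promoted to $\mathcal{C}_\Gamma^{2,\eta}(\overline\Omega)$ via elliptic regularity. Uniqueness then follows from a standard sweeping argument exploiting the super-linearity $p>1$ and the strong comparison principle of Lemma~\ref{Lem23.lin}, the interface integrals canceling due to the symmetry of the bilinear form~\eqref{bilinear_form}.

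The main obstacle is making the supersolution estimate hold in the thin strip near $\Gamma_i$, where $\varphi_{i,\alpha_0}$ vanishes while $a_i$ can remain positive, so that the bracket $M^{p-1}\varphi_{i,\alpha_0}^{p-1}-\alpha_0$ turns negative and cannot be compensated merely by enlarging $M$: both the positive term $(\lambda_{\alpha_0}-\lambda)\,m_i\,\varphi_{i,\alpha_0}$ and the negative contribution $\alpha_0\,a_i\,\varphi_{i,\alpha_0}$ are of the same order as $\varphi_{i,\alpha_0}$. The balance must be obtained by choosing $\alpha_0$ close to the threshold where $\lambda_{\alpha_0}=\lambda$, by using the lower bound $\inf m_i>0$ against $\max a_i$, and by exploiting the Hopf-type boundary decay of $\varphi_{i,\alpha_0}$. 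This is precisely where the restriction $\lambda<\lambda_\infty$ is used essentially and where the interface geometry enters in a nontrivial way.
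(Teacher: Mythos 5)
Your necessity argument is sound, and for the upper bound it is actually a different (equally valid) route from the paper's: you use a Green's identity on the refuge $\Omega_0^{a_1}$ together with Hopf's Lemma and the strict positivity of $u_1$ on $\partial\Omega_0^{a_1}$ (which holds by Lemma~\ref{Lem23.lin}), whereas the paper deduces $0=\Sigma[\mathbf{L}_{-\lambda \mathbf{M}+\mathbf{A}U^{p-1}};\Omega]<\Sigma[\mathbf{L}_{-\lambda \mathbf{M}};\Omega_0]$ from monotonicity of the principal eigenvalue with respect to the domain. The subsolution $\varepsilon\Phi_*$ is fine (the paper takes $\varepsilon\Phi_0$ with $\Sigma[\mathbf{L}_{-\lambda\mathbf{M}};\Omega]<0$, essentially the same computation), and the uniqueness step matches the paper's citation of the sweeping argument.

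The genuine gap is exactly the obstacle you flag at the end, and none of the remedies you sketch can close it. With $\overline U=M\Phi_{\alpha_0}$ the pointwise supersolution condition, after dividing by $M\varphi_{i,\alpha_0}>0$, is $(\lambda_{\alpha_0}-\lambda)m_i+a_i\bigl(M^{p-1}\varphi_{i,\alpha_0}^{p-1}-\alpha_0\bigr)\geq 0$, and letting $x\to\Gamma_i$ this forces $(\lambda_{\alpha_0}-\lambda)\,m_i\geq \alpha_0\,a_i$ on $\Gamma_i$. Since $\overline\Omega_0^{a_i}\subset\Omega_i$, the coefficient $a_i$ is bounded below by a positive constant near $\Gamma_i$, while $\lambda_{\alpha_0}-\lambda\leq\lambda_\infty-\lambda$ stays bounded as $\alpha_0\to\infty$, and pushing $\alpha_0$ down toward the threshold where $\lambda_{\alpha_0}=\lambda$ sends the left-hand side to zero with $\alpha_0$ bounded away from zero; the Hopf-type decay is of no help because both competing terms vanish to the same (first) order in the distance to $\Gamma_i$. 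Hence no admissible pair $(\alpha_0,M)$ exists in general. The paper's construction is designed precisely to avoid this: a supersolution need only be $\geq 0$ on the Dirichlet boundary, not vanish there, so it patches the principal Dirichlet eigenfunction of $-\Delta-\lambda m_i$ on a slightly enlarged refuge $\Omega_\delta^{a_i}$ (whose principal eigenvalue remains positive for small $\delta$ by continuous dependence on the domain, using $\lambda<\lambda_\infty$) with a smooth positive extension $\varphi_{+,i}$ bounded away from zero on all of $\Omega_i\setminus\overline\Omega_{\delta/2}^{a_i}$, including on $\Gamma_i$; there $k^p a_i\varphi_{+,i}^p$ dominates for $k$ large because $a_i\varphi_{+,i}$ is bounded below. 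Replacing your $M\Phi_{\alpha_0}$ by this patched $k\Psi$ repairs the argument; the remainder of your proof stands.
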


\begin{remark}
\label{rem.lambda*2}
  { Analogously to Remark~\ref{rem.lambda*}, condition~\eqref{lambin2} is equivalent to
  \begin{equation}
    \label{eq:equivalent.condition.lambda}
    \Sigma\left[\mathbf{L}_{-\lambda \mathbf{M}};\Omega\right]<0<\Sigma\left[\mathbf{L}_{-\lambda \mathbf{M}};\Omega_0\right].
  \end{equation}
  Indeed,  the function $\Sigma(\lambda):= \Sigma\left[\mathbf{L}_{-\lambda \mathbf{M}};\mathcal{O}\right]$, for any domain $\mathcal{O}$, is continuous and decreasing in $\lambda$, so that there exists a unique value for the parameter $\lambda$, say $\hat\lambda$, for which $\Sigma(\lambda)=0$. Thus, if $\Sigma(\lambda)$ stands for the principal eigenvalue of the problem $\mathbf{L}_{-\lambda \mathbf{M}} \Phi= \Sigma(\lambda)\Phi$ in $\mathcal{O}$,
  we will find that $\Sigma(\lambda)>0$ if $\lambda<\hat\lambda$ and  $\Sigma(\lambda)<0$ if $\lambda>\hat\lambda$.
Using now, $\Omega$ and $\Omega_0$ instead of $\mathcal{O}$ we arrive at
  condition~\eqref{lambin2}. Recall that for $\Omega_0$ we have characterised $\hat\lambda$ as $\lambda_\infty$ in Section~\ref{Section_asym}.

  }
\end{remark}

\begin{proof}
Let us assume that ${U}\in {\mathcal C}_\Gamma^{2,\eta}(\overline\Omega)$ is a positive solution of problem \eqref{eq:main.compact}. Thanks to the uniqueness of the principal eigenvalue we have
$$ \Sigma\left[\mathbf{L}_{-\lambda \mathbf{M}+\mathbf{A}{U}^{p-1}};\Omega\right]=0.$$
Then, applying the monotonicity of the principal eigenvalue with respect to the potential, $$ \Sigma\left[\mathbf{L}_{-\lambda \mathbf{M}};\Omega\right]<\Sigma\left[\mathbf{L}_{-\lambda \mathbf{M}+\mathbf{A}{ U}^{p-1}};\Omega\right]=0.$$
Moreover, due to the monotonicity of the principal eigenvalue with respect to the domain and the spatial configuration of $a_i$ it follows that
\begin{equation*}
\label{eq.sigma.cero}
0=\Sigma\left[\mathbf{L}_{-\lambda \mathbf{M}+\mathbf{A}{\bf U}^{p-1}};\Omega\right]<\Sigma\left[\mathbf{L}_{-\lambda \mathbf{M}};\Omega_0\right]
.\end{equation*}
Note that,  depending on the order of the eigenvalues given by \eqref{eq:case_2} or \eqref{eq:case_1}, the value of $\lambda_\infty$ might be different. However, we
are not distinguishing those cases here. It is just the smallest one.

On the other hand, if~\eqref{lambin2} holds we obtain the existence of positive solutions for problem~\eqref{eq:main.compact} applying the method of sub and supersolutions. 

First we choose the supersolution and to do so,
let us consider for sufficiently small $\delta>0$, the sets $\Omega_\delta^{a_i}$ defined in~\eqref{eq:set.omega.delta}. By the continuous dependence
with respect to the domains, see~\cite{Am05},
\begin{equation*}
  \lim_{\delta\to 0} \Sigma[-\Delta -\lambda m_i;\Omega_{\delta}^{a_i}]=\Sigma[-\Delta -\lambda m_i;\Omega_0^{a_i}].
\end{equation*} Therefore, by assumption, for sufficiently small $\delta>0$,
\begin{equation}
\label{55}
 0< \Sigma[-\Delta -\lambda m_i;\Omega_{\delta}^{a_i}] <
  \Sigma[-\Delta -\lambda m_i;\Omega_0^{a_i}],
\end{equation}
see~\eqref{eq:equivalent.condition.lambda}.
Let
$\varphi_{\delta,i}$,  denote the
principal eigenfunction associated with $\Sigma[-\Delta -\lambda m_i;\Omega_{\delta}^{a_i}]$, for $\delta$ fixed
 with zero Dirichlet data on $\partial\Omega_\delta^{a_i}$.
Now, consider $\Psi=(\psi_1,\psi_2)^T$ defined as
\begin{equation*}
  \psi_i:= \left\{ \begin{array}{ll}
  \varphi_{\delta,i} & \hbox{in}\;\;
  \overline\Omega_{\delta/2}^{a_i},\\
  \varphi_{+,i} & \hbox{in}\;\;\Omega_i\setminus\overline \Omega_{\delta/2}^{a_i},\end{array}\right.
\end{equation*}
 where $\varphi_{+,i}$ is any smooth extension, positive
and separated away from zero, chosen such that $\Psi\in \mathcal{C}^2_\Gamma(\Omega)$.
Note that $\varphi_{+,i}$ exists since
$\varphi_{\delta,i}$ is positive and bounded away from
zero on $\partial \Omega_{\delta/2}^{a_i}$.

Subsequently, we show that $k\Psi$
is a supersolution of~\eqref{eq:main.compact} for sufficiently large
$k$. Indeed, by construction, $k\Psi$ verifies the boundary condition on $\Gamma$ and it is non-negative on $\partial\Omega$.
Moreover, since $a_i=0$ in $\Omega_{\delta/2}^{a_i}$ and $k>0$, from~\eqref{55} we have
$$
  (-\Delta -\lambda m_i )k\varphi_{\delta,i}= \Sigma[-\Delta -\lambda m_i;\Omega_{\delta}^{a_i}]
    k\varphi_{\delta,i} \geq
 0\quad \text{\  in\ }\Omega_{\delta/2}^{a_i}.
$$
Finally, in $\Omega_i\setminus \overline\Omega_{\delta/2}^{a_i}$ it follows that
\begin{equation*}
 (-\Delta - \lambda m_i) \varphi_{+,i}  + a_i k^{p-1} \varphi_{+,i}^{p}\geq 0,
\end{equation*}
for sufficiently large $k>1$, since  $a_i$ and $\varphi_{+,i}$ are positive
and bounded away from zero.
Therefore, $k \Psi$ provides us with a
supersolution of \eqref{eq:main.compact} for sufficiently large $k >1$.

As a subsolution we take $\epsilon \Phi_0$, for $0<\epsilon\ll 1$, and where $\Phi_0$
is the eigenfunction associated with the principal eigenvalue {$\Sigma[\mathbf{L}_{-\lambda\mathbf{M}};\Omega]$}. Indeed
$$
\epsilon (-\Delta -\lambda m_i) \varphi_{i,0}  =\epsilon \Sigma\left[\mathbf{L}_{-\lambda \mathbf{M}};\Omega\right] \varphi_{i,0} <-a_i(x) \epsilon^p \varphi_{i,0}^p
\quad \hbox{in}\quad \Omega_i,
$$
{for sufficiently small  $\epsilon>0$,} since $\Sigma\left[\mathbf{L}_{-\lambda \mathbf{M}};\Omega\right]< 0$, see~\eqref{eq:equivalent.condition.lambda}. On the boundary we have the equality.

Once we have a subsolution and a supersolution to problem~\eqref{eq:main.compact}, applying standard iteration arguments
we are provided with the existence of a positive solution for the system~\eqref{eq:main.compact}.
Moreover, as for the non-degenerate case, the
uniqueness follows again as shown in \cite[Theorem 3.7]{AC-LG2}.
\end{proof}

\begin{remark}
\label{rem:increasing.for.a.0}
  Theorem~\ref{Th2.0} holds in the case we are considering in this section since the monotonicity with respect to the potential used in~\eqref{eq:monotonicity.potential} only requires $a_i$ to be different from zero in sets of positive measure;
  see \cite{PaAn} for a discussion on that matter.
\end{remark}

Next we analyse the asymptotic behaviour of  the solution ${ U}_\lambda$ when the parameter $\lambda$
is in the interval $(\lambda^*,\lambda_\infty)$ but approximates to $\lambda_\infty$, both inside the sets $\Omega_0^{a_1}$ and $\Omega_0^{a_2}$ and outside them.

 We have already seen in Theorem\;\ref{Th2.0} that ${ U}_\lambda$
is strictly increasing for $\l\in (\l^*,\l_\infty)$, see also Remark~\ref{rem:increasing.for.a.0}. However, we shall prove that positive solutions actually blow-up when $\lambda$ approaches $\lambda_\infty$ in the respectively vanishing
domains $\Omega_0^{a_1}$ and $\Omega_0^{a_2}$ and depending on  \eqref{eq:case_2} or \eqref{eq:case_1}.

\begin{theorem}
\label{Leminft}
For any fixed $\lambda\in (\lambda^*,\lambda_\infty)$ let ${U}_\lambda=(u_{1,\lambda}, u_{2,\lambda})^T$ be the unique positive solution of~\eqref{eq:main.compact}.  Then, as $\lambda\to \lambda_\infty$:
\begin{itemize}
\item If $\lambda_\infty=\lambda^{m_1}[-\Delta,\Omega_0^{a_1}]<\lambda^{m_2}[-\Delta,\Omega_0^{a_2}]$ then $u_{1,\lambda}$
tends to infinity uniformly on every compact subset of $\Omega_0^{a_1}$, while $u_{2,\lambda}\equiv 0$.
\item If $\lambda_\infty=\lambda^{m_2}[-\Delta,\Omega_0^{a_2}]<\lambda^{m_1}[-\Delta,\Omega_0^{a_1}]$ then
$u_{2,\lambda}$
tends to infinity uniformly on every compact subset of $\Omega_0^{a_2}$ while $u_{1,\lambda}\equiv 0$.
\item If $\lambda_\infty=\lambda^{m_1}[-\Delta,\Omega_0^{a_1}]=\lambda^{m_2}[-\Delta,\Omega_0^{a_2}]$ then
$u_{1,\lambda}$ and $u_{2,\lambda}$
tend to infinity uniformly on every compact subset of $\Omega_0^{a_1}$ and $\Omega_0^{a_2}$, respectively.
\end{itemize}
\end{theorem}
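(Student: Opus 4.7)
The plan is to combine the monotonicity of $U_\lambda$ in $\lambda$ (from Theorem~\ref{Th2.0} and Remark~\ref{rem:increasing.for.a.0}) with a test-function argument against the limit eigenfunction $\Phi_\infty$ from Theorem~\ref{limprob}, and then to upgrade the resulting integral blow-up to uniform blow-up on compacts by means of Harnack's inequality.

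First I would treat the unbalanced case $\lambda_\infty = \lambda^{m_1}[-\Delta,\Omega_0^{a_1}] < \lambda^{m_2}[-\Delta,\Omega_0^{a_2}]$. By~\eqref{eq:case_1_gen}, the limit eigenfunction is $\Phi_\infty = (\varphi_{1,\infty},0)^T$, with $\varphi_{1,\infty}>0$ in $\Omega_0^{a_1}$ and vanishing on $\partial\Omega_0^{a_1}$. Since $a_1 \equiv 0$ on $\Omega_0^{a_1}$, equation~\eqref{eq:main.compact} restricted to that subdomain reduces to $-\Delta u_{1,\lambda} = \lambda m_1 u_{1,\lambda}$. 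Multiplying by $\varphi_{1,\infty}$, integrating on $\Omega_0^{a_1}$, and using the eigenvalue equation for $\varphi_{1,\infty}$ together with its zero Dirichlet data, Green's identity gives
\begin{equation*}
(\lambda_\infty - \lambda)\int_{\Omega_0^{a_1}} m_1 u_{1,\lambda} \varphi_{1,\infty} = -\int_{\partial\Omega_0^{a_1}} u_{1,\lambda} \frac{\partial\varphi_{1,\infty}}{\partial \nu}.
\end{equation*}
By Hopf's lemma, $\partial_\nu \varphi_{1,\infty} < 0$ on $\partial\Omega_0^{a_1}$, so the right-hand side is bounded below by $c_0 \inf_{\partial\Omega_0^{a_1}} u_{1,\lambda}$ for some constant $c_0>0$ independent of $\lambda$. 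Since $\overline{\Omega_0^{a_1}}\subset \Omega_1$ and $U_\lambda$ is strongly positive (Lemma~\ref{Lem23.lin}), the monotonicity of $U_\lambda$ in $\lambda$ ensures $\inf_{\partial\Omega_0^{a_1}} u_{1,\lambda} \geq \inf_{\partial\Omega_0^{a_1}} u_{1,\lambda_0}>0$ for any fixed $\lambda_0\in(\lambda^*,\lambda_\infty)$. Dividing by $\lambda_\infty - \lambda>0$ and sending $\lambda\uparrow\lambda_\infty$ forces $\int_{\Omega_0^{a_1}} m_1 u_{1,\lambda} \varphi_{1,\infty} \to +\infty$.

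Next, to pass from integral divergence to uniform pointwise blow-up on compacts, I would fix a compact $K\subset \Omega_0^{a_1}$ and slightly enlarge it to a compact $K'\subset \Omega_0^{a_1}$. Since $u_{1,\lambda}$ solves the linear equation $(-\Delta - \lambda m_1) u_{1,\lambda}=0$ on $\Omega_0^{a_1}$ with bounded coefficient, Harnack's inequality provides a constant $C=C(K,\lambda_\infty)$ with $\sup_{K'} u_{1,\lambda}\leq C\inf_K u_{1,\lambda}$. Combined with the previous $L^1$-divergence, this forces $\inf_K u_{1,\lambda}\to+\infty$, which is the desired uniform blow-up. The symmetric case $\lambda_\infty=\lambda^{m_2}<\lambda^{m_1}$ is identical with roles of the indices interchanged, while in the balanced case $\lambda_\infty=\lambda^{m_1}=\lambda^{m_2}$ both components of $\Phi_\infty$ are nontrivial, and the test-function identity can be applied in parallel on $\Omega_0^{a_1}$ and $\Omega_0^{a_2}$, yielding simultaneous blow-up.

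The main obstacle will be the assertion that $u_{2,\lambda}\equiv 0$ in the unbalanced case, since the strong positivity of $U_\lambda$ for $\lambda>\lambda^*$ gives $u_{2,\lambda}>0$ pointwise. This claim must be understood asymptotically, consistently with the limit profile $\Phi_\infty=(\varphi_{1,\infty},0)^T$: no blow-up occurs on $\Omega_0^{a_2}$ and $u_{2,\lambda}$ remains uniformly $L^\infty$-bounded up to $\lambda=\lambda_\infty$. To establish this I would construct an $L^\infty$-supersolution on $\Omega_2$ exploiting the strict inequality $\lambda<\lambda^{m_2}[-\Delta,\Omega_0^{a_2}]$, which persists in the limit, following the pattern of the supersolution used in Theorem~\ref{Theouni2}. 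The technical difficulty is that the supersolution must accommodate the interface condition on $\Gamma$, where the boundary data for $u_{2,\lambda}$ is coupled to $u_{1,\lambda}|_\Gamma$ through $\mu(u_{2,\lambda}-u_{1,\lambda})$; however, since $\overline{\Omega_0^{a_1}}\subset \Omega_1$ is strictly separated from $\Gamma$, the interior blow-up of $u_{1,\lambda}$ does not propagate to its trace on $\Gamma$, and a uniform bound on $u_{1,\lambda}|_\Gamma$ in terms of $\lambda_\infty$ can be obtained by elliptic regularity applied in a neighborhood of $\Gamma$ where $a_1$ is bounded away from zero.
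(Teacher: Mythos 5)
Your route is genuinely different from the paper's: the paper builds an explicit subsolution $\alpha_n^{1/(p-1)}\Phi_{\alpha_n}$ out of the eigenpairs $(\lambda_{\alpha_n},\Phi_{\alpha_n})$ of the penalised problem~\eqref{asylin} studied in Section~\ref{Section_asym}, chooses $\alpha_n$ so that $\lambda_{\alpha_n}=\lambda_n$, and then lets the prefactor $\alpha_n^{1/(p-1)}\to\infty$ do the work via comparison, whereas you test the equation directly against the fixed limit eigenfunction $\varphi_{1,\infty}$. Your Green's identity is correct, Hopf's lemma does give $-\int_{\partial\Omega_0^{a_1}}u_{1,\lambda}\,\partial_\nu\varphi_{1,\infty}\ge c_0\inf_{\partial\Omega_0^{a_1}}u_{1,\lambda}\ge c_1>0$ uniformly by monotonicity, and hence $\int_{\Omega_0^{a_1}}m_1u_{1,\lambda}\varphi_{1,\infty}\to\infty$. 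That part is clean and arguably more transparent than the paper's construction.

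The genuine gap is the step ``$L^1$-divergence $+$ Harnack $\Rightarrow\inf_Ku_{1,\lambda}\to\infty$.'' Interior Harnack on a compact $K'\subset\subset\Omega_0^{a_1}$ only controls $\int_{K'}m_1u_{1,\lambda}\varphi_{1,\infty}$ by $C\inf_Ku_{1,\lambda}$; it says nothing about the contribution of $\Omega_0^{a_1}\setminus K'$, and a priori the divergence of the full integral could be produced entirely by $u_{1,\lambda}$ accumulating in a collar of $\partial\Omega_0^{a_1}$ (where $\varphi_{1,\infty}$ is small but positive) while remaining bounded on every interior compact. To close this you need an extra ingredient: for instance the weighted estimate $\int_{\Omega_0^{a_1}}u\,\mathrm{dist}(\cdot,\partial\Omega_0^{a_1})\le C\,u(x_0)$, valid for nonnegative superharmonic functions (note $-\Delta u_{1,\lambda}=\lambda m_1u_{1,\lambda}\ge0$ in $\Omega_0^{a_1}$) with $C$ independent of $\lambda$, combined with $\varphi_{1,\infty}\lesssim\mathrm{dist}(\cdot,\partial\Omega_0^{a_1})$; or, alternatively, replace $u_{1,\lambda}$ by the solution $h_\lambda$ of $(-\Delta-\lambda m_1)h=0$ in $\Omega_0^{a_1}$ with constant boundary data $\delta_0\le\inf_{\partial\Omega_0^{a_1}}u_{1,\lambda_0}$, which minorises $u_{1,\lambda}$ by the maximum principle, and show $h_\lambda\to\infty$ locally uniformly via the Fredholm alternative at $\lambda=\lambda_\infty$ together with Harnack. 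A second shortfall: for the component $u_{2,\lambda}$ in the unbalanced case you only propose a uniform $L^\infty$ bound, whereas the statement asserts $u_{2,\lambda}\equiv0$ in the limit; boundedness alone does not yield this (indeed, by the monotonicity in $\lambda$ that you invoke, $u_{2,\lambda}\ge u_{2,\lambda_0}>0$, so some argument specific to the degeneration at $\lambda_\infty$ is needed, which is what the paper attempts with the comparison $u_{2,n}\le C\varphi_{2,n}$ and the convergence $\varphi_{2,n}\to\varphi_{2,\infty}=0$ from Theorem~\ref{limprob}). Finally, note that the statement only requires blow-up on compact subsets of the open set $\Omega_0^{a_1}$, so you are right that the paper's lengthy barrier construction up to $\partial\Omega_0^{a_1}$ is not needed for the statement as written.
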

\begin{proof}
Let us consider a sequence $\{\lambda_n\}$ converging from below to $\lambda_\infty$ as $n\to\infty$ and the corresponding unique solutions $U_{\lambda_n}$, which we denote, for simplicity, $U_n$. Take two compact subsets $K_i\subset \Omega_0^{a_i}$. Now, for a fixed $\lambda_n$, let $\Phi_n$ be the principal eigenfunction associated with the
principal eigenvalue $\lambda_{n}$ of~\eqref{asylin}. Furthermore, thanks to the convergence of the linear problem~\eqref{asylin} shown in Section\;\ref{Section_asym},  it follows that
$$\Phi_{n}\to \Phi_\infty\quad \hbox{in}\quad H^1(K_1)\times  H^1(K_2),$$
as $n\to \infty$, where $\Phi_\infty$ is a solution of~\eqref{limaspo} and $\varphi_{1,\infty}$ or $\varphi_{2,\infty}$ might be identically $0$, depending on~\eqref{eq:case_2},~\eqref{eq:case_1} or~\eqref{eq:case_1_gen}. Hence, without loss of generality, let us assume that $\lambda_\infty=\lambda^{m_1}[-\Delta,\Omega_0^{a_1}]<\lambda^{m_2}[-\Delta,\Omega_0^{a_2}]$. The other two cases are handled analogously.

It is straightforward to see that, if $\underline U=\alpha_n^{\frac{1}{p-1}} \Phi_{n}$, then $\underline u_1$ satisfies
$$
-\Delta \underline u_1-\lambda_n m_1 \underline u_1-a_1 \underline u_1^p\leq 0\ \ \text{for\ }x\in\Omega_1, \qquad \underline u_1|_{x\in\Gamma_1}=0,\qquad {\frac{\partial \underline u_1}{\partial{\bf n_1}}-\mu(\underline u_2-\underline u_1)|_{x\in\Gamma}=0}.
$$
Moreover, if $C$ is a big enough constant, then $C \varphi_2$ is a supersolution in $K_2$ to
$$
-\Delta v-\lambda_n m_2 v=0,\quad   v=\sup_{x\in\partial K_2} u_{2,n}(x).
$$
Since $\varphi_{1,\infty} >0$ in $K_1$ and $\varphi_{2,\infty} =0$ in $K_2$, by comparison we find that
$$u_{1,n}\geq \alpha_n^{\frac{1}{p-1}} \varphi_{1,n} \rightarrow \infty\  \hbox{uniformly in}\ K_1,\quad \text{and}\quad u_{2,n}\leq C\varphi_{2,n}\to 0\  \hbox{uniformly in}\  K_2.$$
Moreover, due to the convergence of the eigenfunctions proved in Section~\ref{Section_asym} it follows that $u_{2,n} \to 0$ uniformly in $\overline{\Omega}_2$.
To conclude the proof we must prove convergence up to the boundary of $\Omega_0^{a_1}$ for the component $u_{1,n}$. The proof consists on a geometric construction based upon an argument shown in~\cite{DuHuang} and argues by contradiction.

Since $\lambda_n>0$ for all $n\geq 1$ we have that $-\Delta u_{1,n}=\lambda_n m_1 u_{1,n}\geq 0$ in $\Omega_0^{a_1}$.
Then, due to the maximum principle it is enough to prove
\begin{equation*}
\label{limfro}
u_{1,n}(x_n):=\min_{\partial \Omega_0^{a_1}} u_{1,n}(x)\rightarrow \infty,\quad \hbox{as}\quad n\to \infty,
\end{equation*}
where $x_n\equiv x_{\lambda_n}\in \partial\Omega_0^{a_1}$. Assume, by contradiction that there is a subsequence such that
\begin{equation}
\label{bound_inft}
u_{1,n}(x_n)\leq \kappa\quad \hbox{for all $n\geq 1$ and $x_n\in \partial\Omega_0^{a_1}$,}
\end{equation}
with $\kappa$ a positive constant.

Due to the smoothness of $\partial\Omega_0^{a_1}$, there exists $R>0$ and a map $y\,:\,\partial\Omega_0^{a_1} \rightarrow \Omega_0^{a_1}$,
such that for every $x\in \partial\Omega_0^{a_1}$
\begin{equation}
\label{con_y_ball}
B_R(y(x))\subset \overline\Omega_0^{a_1},\quad \overline B_R(y(x))\cap \partial\Omega_0^{a_1}=\{x\}.
\end{equation}
Indeed, the map $y$ provides us with the centre of the balls in $\Omega_0^{a_1}$ satisfying \eqref{con_y_ball}.
Observe that $\partial\Omega_0^{a_1}\subset \Omega_1$, and the boundaries do not touch, $\partial \Omega_1\cap \partial\Omega_0^{a_1}=\emptyset$.  In particular,
$$u_{1,n}(x) \geq u_{1,n}(x_n)\quad \hbox{for each}\quad x\in \overline B_R(y(x_n)).$$
Define $A_R(y(x_n))=B_R(y(x_n))\setminus \overline B_{R/2}(y(x_n))$ and consider the problem
\begin{equation}
\label{aux56}
\left\{ \begin{array}{ll}
 -\Delta u=\lambda_n m_1 u  &  \hbox{in}\quad A_R(y(x_n)),\\
u= u_{1,n}(x_n)+c_{n}\left(e^{-\delta R^2/4}-e^{-\delta R^2}\right) & \hbox{on}\quad \partial B_{R/2}(y(x_n)),\\
u=u_{1,n}(x_n)& \hbox{on}\quad \partial B_{R}(y(x_n)),
\end{array}\right.
\end{equation} where
\begin{equation*}
\label{cn_inf}
c_{n}= \frac{\min_{\overline  B_{R/2}(y(x_n))}u_{1,n}(x) - u_{1,n}(x_n)}{e^{-\delta R^2/4}-e^{-\delta R^2}}.
\end{equation*}
It is clear that $u_{1,n}(x_n)+c_{n}\left(e^{-\delta R^2/4}-e^{-\delta R^2}\right)\leq u_{1,n}(x)$, for all $x\in \overline   B_{R/2}(y(x_n))$. Hence, $u_{1,n}(x)$ is a supersolution of the problem~\eqref{aux56}.

Similarly, if we define for $\delta>0$ and $x\in  A_R(y(x_n))$ the barrier function of exponential type (as in the proof of the
Hopf-Oleinik boundary lemma)
$$w_{n}(x):=e^{-\delta |x-y(x_n)|^2}-e^{-\delta R^2},$$ we can see that $u_n(x_n)+c_n w_n$ is a subsolution of \eqref{aux56}. Indeed,
a simple computation gives
$$\left(-\Delta - \lambda_n m_1\right)w_n(x)=\left(2\delta N-4\delta^2|x-y(x_n)|^2-\lambda_n m_1(x_n)\right) e^{-\delta |x-y(x_n)|^2}+\lambda m_1 e^{-\delta R^2}.$$
Thus, for $\eta>0$, there exists $\delta>0$ large enough such that
\begin{equation*}
\label{eq:omega_inf}
\left(-\Delta - \lambda_n m_1\right)w_n(x) \leq -\eta<0 \quad \hbox{in}\quad A_R(y(x_n)).
\end{equation*}
Therefore, due to the comparison principle
for~\eqref{aux56}, we have
\begin{equation}
  \label{eq:comp.anillo}
  u_{1,n}(x) >u_{1,n}(x_n)+c_{n} w_{n}\quad \hbox{for every}\quad x\in \overline A_R(y(x_n)). \
\end{equation}

Finally, choose a compact set $K\subset\subset\Omega_0^{a_i}$ such that
$\cup_{n=1}^\infty B_{R/2}(y(x_n)) \subset K$. Since  $u_{1,n}(x)\to\infty$ uniformly in $K$ and, by assumption,  $u_{i,n}(x_n)<\kappa$ for all $n$, we have that \begin{equation}
\label{cinf}
c_{n}\rightarrow \infty,\quad \hbox{as}\quad n\to \infty\quad \hbox{(in other words, when $\lambda_n$ goes to $\lambda_\infty$)}.
\end{equation}
Furthermore, setting the normalised direction
$${\bf n}_{n}:=\frac{y(x_n)-x_n}{R},$$
it follows, using~\eqref{eq:comp.anillo}, that the partial derivative in that direction yields
\begin{align*}
\frac{\partial u_{1,n}}{\partial {\bf n}_{n}}(x_n) & =\lim_{t\to 0}\frac{u_{1,n}(x_n+t {\bf n}_{n})-u_{1,n}(x_n)}{t} \geq c_{n} \lim_{t\to 0} \frac{w_{n}(x_n+t {\bf n}_{n})}{t}\\ &
= c_{n} \lim_{t\to 0} \frac{e^{-\delta |x_n+t {\bf n}_{n}-y(x_n)|^2}-e^{-\delta R^2}}{t}
\\ & = c_{n} \lim_{t\to 0} \frac{e^{-\delta |t {\bf n}_{n}-{\bf n}_{n} R|^2}-e^{-\delta R^2}}{t}
\geq c_{n} \lim_{t\to 0} \frac{e^{-\delta (t-R)^2}-e^{-\delta R^2}}{t}=c_{n} 2\delta R e^{-\delta R^2}.
\end{align*}
Consequently, due to \eqref{cinf},
\begin{equation}
\label{liminf89}
\lim_{n\to \infty}\frac{\p u_{1,n}}{\p {\bf n}_n}(x_n)=\infty.
\end{equation}
On the other hand, we claim that
\begin{equation}
  \label{eq:claim.derivative.bound}
  \frac{\partial u_{1,n}}{\partial {\bf n}_{n}}(x_n)\leq  C,
\end{equation}
contradicting \eqref{liminf89} and proving that $u_{1,n}\to\infty$ uniformly in $\overline\Omega_0^{a_1}$.

We deal now with the proof of the claim~\eqref{eq:claim.derivative.bound}. To this aim, we  consider the auxiliary problem
\begin{equation}
\label{aux78}
-\Delta u=\lambda_n m_1(x) u -a_1(x)u^p \quad    \hbox{in}\quad \Omega_1\setminus \overline\Omega_0^{a_1},\\
\end{equation}
with boundary conditions
\begin{equation}
\label{aux78.bd}
u= u_{1,n}(x_n)\quad   \hbox{on}\quad  \partial \Omega_0^{a_1},\qquad
u= 0 \quad   \hbox{on}\quad  \Gamma_1,\qquad
\dfrac{\partial u}{\partial {\bf n_1}}+\mu u= C\quad \hbox{on}\quad   \Gamma.
\end{equation}
where $C>0$ depends on $u_{2,n}$. Problem~\eqref{aux78}--\eqref{aux78.bd} admits a unique positive solution, $v_n$, for every $\lambda_n\leq \lambda_\infty$ sufficiently close, see~\cite{Santi2}. Thus, if $C$ is chosen so that
$\min(u_{2,n})$ on $\Gamma$ is bigger than $C$, then  $u_{1,n}$ is a positive strict supersolution for~\eqref{aux78}--\eqref{aux78.bd} and, by comparison
\begin{equation}
  \label{eq:aux.comparison.omega} v_n(x) \leq u_{1,n}(x),\quad \hbox{for any}\quad  x\in \overline \Omega_1\setminus \Omega_0^{a_1}.
\end{equation}
Moreover, let $v_\infty$  be the unique positive solution of
$$
-\Delta u=\lambda_\infty m_1(x) u -a_1(x)u^p  \quad   \hbox{in}\quad \Omega_1\setminus \overline\Omega_0^{a_1}$$
and
$$
u= \kappa\quad \hbox{on}\quad \partial \Omega_0^{a_1},\qquad
u= 0 \quad\hbox{on}\quad \Gamma_1,\qquad
\dfrac{\partial u}{\partial {\bf n_1}}+\mu u= 0 \quad \hbox{on}\quad \Gamma,
$$
which is, again by comparison, a supersolution to~\eqref{aux78}--\eqref{aux78.bd}, with $\kappa$ as the upper bound of $u_n$ in~\eqref{bound_inft}.
By comparison we have $v_{n}<v_{\infty}$
and, in particular,
$\|v_{n}\|_{L^\infty(\overline \Omega_1\setminus\Omega_0^{a_1})}$ has a bound independent of $n$. Due to the $L^p$ estimates and the Sobolev embedding theorem we have that $\{v_{n}\}$ is a bounded
sequence in $\mathcal{C}^{1,\eta}(\overline \Omega_1\setminus \Omega_0^{a_1})$ and, hence,
$\|\nabla v_{n}\|_{L^\infty(\overline \Omega_1\setminus\Omega_0^{a_1})}\leq C$,
for a positive constant $C$. Finally, due to~\eqref{eq:aux.comparison.omega} and sincce $v_{n}(x_n)=u_{1,n}(x_n),$
we conclude that
$$\frac{\partial u_{1,n}}{\partial {\bf n}_{n}}(x_n)\leq \frac{\partial v_{n}}{\partial {\bf n}_{n}}(x_n)\leq C,$$
which proofs the claim~\eqref{eq:claim.derivative.bound} and contradicts~\eqref{liminf89}.
\end{proof}

Next, we analyse what happens outside the sets $\Omega_0^{a_1}$ and $\Omega_0^{a_2}$. We deal first with the case $\lambda_\infty=\lambda^{m_1}[-\Delta,\Omega_0^{a_1}]=\lambda^{m_2}[-\Delta,\Omega_0^{a_2}]$.  To this aim we consider the problem
\be
\label{minL}
-\Delta u_i=\lambda_\infty m_i(x) u_i-a_i(x) u_i^p \quad   \hbox{in}\quad \Omega_i\setminus\overline\Omega_0^{a_i},
\ee
with boundary conditions
\be
\label{minL_bdy}
\dfrac{\partial u_{1}}{\partial {\bf n_1}}=  \dfrac{\partial u_{2}}{\partial {\bf n_1}} =\mu (u_{2}-u_{1})\quad   \hbox{on}\  \Gamma,\qquad u_i=0\quad  \hbox{on}\ \Gamma_i
\ee
and
\be
\label{minL_bdy2}
u_i=\infty\quad  \hbox{on}\ \partial \Omega_0^{a_i}.
\ee
As is common in the literature, by $u_i=\infty$ on $\partial\Omega_0^{a_i}$ we mean that $
 u_i(x) \to\infty$ {as} ${\rm dist}(x,\partial\Omega_0^{a_i})
 \to 0.$

\begin{lemma}
  \label{lemma:minimal.solution.aux}
  For any $\lambda\in(-\infty,\infty)$, Problem~\eqref{minL}--\eqref{minL_bdy2} has a minimal positive solution.
\end{lemma}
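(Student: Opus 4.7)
The plan is to obtain the minimal solution as the monotone pointwise limit of an approximating sequence whose boundary data on $\partial\Omega_0^{a_i}$ are finite but increasing to infinity. For each integer $n\geq 1$, consider the auxiliary problem $(P_n)$ consisting of \eqref{minL}--\eqref{minL_bdy} together with the additional condition $u_i=n$ on $\partial\Omega_0^{a_i}$. First, I would prove that $(P_n)$ admits a positive classical solution $U_n\in\mathcal{C}^{2,\eta}_\Gamma(\overline{\Omega\setminus\overline{\Omega_0^a}})$ by the sub--supersolution method adapted to the interface setting, as used in Theorem~\ref{Theouni2}. As a subsolution one can take $\underline{U}=\varepsilon\Phi_0$ for small $\varepsilon>0$, with $\Phi_0$ as in the proof of Theorem~\ref{Theouni2} (or $\underline{U}=0$ if $\lambda\leq\lambda^{*}$). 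A supersolution of the form $\overline{U}=K\Psi$ can be built exactly as in the proof of Theorem~\ref{Theouni2}, enlarging $K$ in terms of $n$ so that $K\psi_i\geq n$ on $\partial\Omega_0^{a_i}$; note that $a_i$ is bounded away from zero in a neighborhood of $\partial\Omega_0^{a_i}$, which is what makes the supersolution argument work there.

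Next, the comparison principle for the coupled interface system (cf.~\cite{WangSu} and its use in Theorem~\ref{Theouni2}) applied to two consecutive truncations yields $U_n\leq U_{n+1}$, so the sequence is monotone increasing in $n$. The key step is then to derive a uniform a priori upper bound for $\{U_n\}$ on every compact set $K\subset (\Omega_i\setminus\overline{\Omega_0^{a_i}})\cup\Gamma_i$, independently of $n$. This is a Keller--Osserman type local estimate exploiting the superlinear absorption $a_i(x) u_i^p$ with $p>1$ and $a_i\geq a_{\min}>0$ outside a neighborhood of $\overline{\Omega_0^{a_i}}$. Componentwise, each $u_{i,n}$ satisfies a scalar equation with superlinear absorption, so comparing with radial supersolutions of $-\Delta w=\lambda\|m_i\|_\infty w-a_{\min} w^p$ inside small balls gives a pointwise bound of order $\mathrm{dist}(x,\partial\Omega_0^{a_i})^{-2/(p-1)}$, independent of $n$; this bound is local and does not see the interface coupling on $\Gamma$.

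With monotonicity and the local uniform bound in hand, standard elliptic regularity (interior and boundary Schauder estimates away from $\partial\Omega_0^{a_i}$, together with the regularity up to $\Gamma$ provided by the auxiliary theorem in Section~\ref{subsect:eigenvalue_problem}) allows to extract a subsequence converging in $\mathcal{C}^{2,\eta}_{\mathrm{loc}}$ to a function $U_\infty$ which is a classical positive solution of \eqref{minL}--\eqref{minL_bdy}. The blow-up condition \eqref{minL_bdy2} is automatic: for any $x_k\to x_0\in\partial\Omega_0^{a_i}$ and any $n$, monotonicity together with continuity up to $\partial\Omega_0^{a_i}$ of each $U_n$ yields
\[
\liminf_{k\to\infty} u_{i,\infty}(x_k)\geq \liminf_{k\to\infty} u_{i,n}(x_k) = n,
\]
so $u_{i,\infty}(x)\to\infty$ as $\mathrm{dist}(x,\partial\Omega_0^{a_i})\to 0$.

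Finally, for minimality, let $V$ be any positive solution of \eqref{minL}--\eqref{minL_bdy2}. Fix $n$; since $v_i(x)\to\infty$ as $x\to\partial\Omega_0^{a_i}$, there exists a neighborhood where $v_i\geq n$, so $V\geq U_n$ on $\partial\Omega_0^{a_i}$ (in the limiting sense), while both satisfy the same interface condition on $\Gamma$ and vanish on $\Gamma_i$. The comparison principle then gives $V\geq U_n$ throughout $\Omega\setminus\overline{\Omega_0^a}$; letting $n\to\infty$ produces $V\geq U_\infty$, proving minimality. The main obstacle I expect is the uniform a priori bound near the interface $\Gamma$: the Keller--Osserman argument is essentially local and scalar, so one must be careful that the nonlocal coupling $\partial_{{\bf n_1}}u_i=\mu(u_2-u_1)$ on $\Gamma$ does not spoil the construction; this can be circumvented either by using barriers that are themselves compatible with the membrane condition, or by first bounding each component separately via the equation and then combining the estimates across $\Gamma$.
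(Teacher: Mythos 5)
Your strategy---exhaust the boundary datum on $\partial\Omega_0^{a_i}$ by finite levels $n$, obtain monotonicity by comparison, use a local Keller--Osserman bound to pass to the limit, and derive minimality by comparing an arbitrary solution with each truncation---is precisely the argument of Du and Huang~\cite{DuHuang} that the paper invokes, without further detail, as its entire proof of this lemma. So in approach you and the paper coincide, and your outline supplies the details the paper omits.

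One supporting claim, however, is false and the step it justifies needs repair. You assert that $a_i$ is bounded away from zero \emph{in a neighbourhood of} $\partial\Omega_0^{a_i}$. By~\eqref{eq:omega_a} and the continuity of $a_i$, the coefficient vanishes on $\overline\Omega_0^{a_i}$, hence $a_i(x)\to 0$ as $\mathrm{dist}(x,\partial\Omega_0^{a_i})\to 0$; it is bounded below only on compact subsets of $\Omega_i\setminus\overline\Omega_0^{a_i}$, i.e.\ at positive distance from the blow-up boundary. Consequently the supersolution inequality for $K\Psi$ cannot be closed near $\partial\Omega_0^{a_i}$ by absorbing $\lambda m_i K\psi_i$ into $a_i(K\psi_i)^p$ there, and for $\lambda>0$ a large constant fails for the same reason. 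The standard repair: in a thin annulus $A_\delta=\Omega_\delta^{a_i}\setminus\overline\Omega_0^{a_i}$ the Dirichlet principal eigenvalue of $-\Delta-\lambda m_i$ tends to $+\infty$ as $\delta\to 0$, so for $\delta$ small the linear problem $(-\Delta-\lambda m_i)h=0$ in $A_\delta$ with $h=n$ on $\partial\Omega_0^{a_i}$ and $h$ large on the outer boundary has a positive solution, which is automatically a supersolution of the nonlinear equation since $a_ih^p\geq 0$; glue this to a large constant in the region where $a_i$ is genuinely bounded below. (This is also what makes the statement true for \emph{every} $\lambda\in\mathbb{R}$, as claimed.) Your remaining worry about the uniform bound near $\Gamma$ is legitimate but resolvable exactly as the paper does later in the proof of Theorem~\ref{lamlinTh}: compare $U_n$ with the boundary blow-up solution $Z$ of~\eqref{minL23} on $\Omega_{i,\epsilon}$, which satisfies the same membrane condition on $\Gamma$ and is finite on compact subsets, thereby bypassing the purely local, scalar nature of the radial barrier. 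With these two adjustments the proposal is sound.
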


\begin{proof}
  The proof of this result follows the same argument as in~\cite{DuHuang}.
\end{proof}

\begin{theorem}
\label{lamlinTh}
For any fixed $\lambda\in (\lambda^*,\lambda_\infty)$ let ${U}_\lambda$ be the unique positive solution of~\eqref{eq:main.compact}.
If $\lambda_\infty=\lambda^{m_1}[-\Delta,\Omega_0^{a_1}]=\lambda^{m_2}[-\Delta,\Omega_0^{a_2}]$, then
$U_{\lambda} \to {U}_{\lambda_\infty}$ uniformly on compact subsets of $\overline\Omega\setminus(\overline{\Omega}_0^{a_1}\cup \overline{\Omega}_0^{a_2})$,
as $\lambda\to \lambda_\infty$, where $U_{\lambda_\infty}$ is the minimal positive solution of~\eqref{minL}--\eqref{minL_bdy2}.
\end{theorem}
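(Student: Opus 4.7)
The plan is to combine three ingredients: the monotonicity of $\lambda\mapsto U_\lambda$, a uniform upper bound away from the vanishing subdomains, and a minimality argument that identifies the limit. First, by Theorem~\ref{Th2.0} (valid in the present degenerate setting thanks to Remark~\ref{rem:increasing.for.a.0}), the map $\lambda\mapsto U_\lambda$ is strictly increasing on $(\lambda^*,\lambda_\infty)$, so the monotone pointwise limit
\[
U_\star(x):=\lim_{\lambda\uparrow\lambda_\infty}U_\lambda(x),\qquad x\in\overline\Omega\setminus\bigl(\overline\Omega_0^{a_1}\cup\overline\Omega_0^{a_2}\bigr),
\]
exists in $[0,+\infty]$. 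The task reduces to showing that $U_\star$ is finite on compact subsets, solves~\eqref{minL}--\eqref{minL_bdy2}, and coincides with the minimal positive solution $U_{\lambda_\infty}$ supplied by Lemma~\ref{lemma:minimal.solution.aux}.

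The crucial step is the uniform upper bound. Using that $\lambda<\lambda_\infty$, $\mathbf M>0$ and $U_{\lambda_\infty}\ge 0$, a direct computation gives
\[
-\Delta U_{\lambda_\infty,i}-\lambda m_i U_{\lambda_\infty,i}+a_i U_{\lambda_\infty,i}^p=(\lambda_\infty-\lambda)m_i U_{\lambda_\infty,i}\ge 0 \quad\hbox{in}\quad \Omega_i\setminus\overline\Omega_0^{a_i},
\]
so $U_{\lambda_\infty}$ is a supersolution of~\eqref{eq:main.compact} at parameter $\lambda$. The boundary data are favourable for comparison: $U_{\lambda_\infty}=U_\lambda=0$ on $\Gamma_i$, $U_{\lambda_\infty}=+\infty\ge U_\lambda$ on $\partial\Omega_0^{a_i}$, and both obey the same Kedem--Katchalsky interface condition on $\Gamma$. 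I would then invoke a comparison principle for the interface problem, adapted from~\cite{WangSu} and~\cite{Suarezetal}, by testing the difference $W:=U_\lambda-U_{\lambda_\infty}$ against $W^+$ in the bilinear form~\eqref{bilinear_form}, to conclude $U_\lambda\le U_{\lambda_\infty}$ on $\Omega\setminus(\overline\Omega_0^{a_1}\cup\overline\Omega_0^{a_2})$. This delivers the uniform local bound $\|U_\lambda\|_{L^\infty(K)}\le C_K$ for every compact $K$ staying away from $\partial\Omega_0^{a_i}$.

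With $L^\infty$ control on the right-hand sides $\lambda m_i u_{i,\lambda}-a_i u_{i,\lambda}^p$, standard $L^p$ and Schauder estimates together with the interface regularity described in Section~\ref{subsect:eigenvalue_problem} produce uniform $\mathcal{C}^{2,\eta}$ bounds on slightly smaller compacta. Arzel\`a--Ascoli combined with the monotonicity of Step~1 yields convergence in $\mathcal{C}^{2}_{\mathrm{loc}}(\overline\Omega\setminus(\overline\Omega_0^{a_1}\cup\overline\Omega_0^{a_2}))$ of the whole family to $U_\star$, which is therefore a classical positive solution of~\eqref{minL}--\eqref{minL_bdy}; Dini's theorem upgrades the monotone convergence to uniform convergence on each compact~$K$. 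To identify $U_\star$ with $U_{\lambda_\infty}$, I would pass to the limit in the comparison of Step~2 to get $U_\star\le U_{\lambda_\infty}$, while the proof of Theorem~\ref{Leminft} actually establishes $\min_{\partial\Omega_0^{a_i}}u_{i,\lambda}\to\infty$ as $\lambda\uparrow\lambda_\infty$. Combined with continuity of $u_{i,\lambda}$ across $\partial\Omega_0^{a_i}$ (each $u_{i,\lambda}$ is defined and smooth on all of $\Omega_i$), this forces $U_\star(x)\to+\infty$ as $x\to\partial\Omega_0^{a_i}$ from $\Omega_i\setminus\overline\Omega_0^{a_i}$, so $U_\star$ is itself a positive solution of~\eqref{minL}--\eqref{minL_bdy2}. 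Minimality then yields $U_\star\ge U_{\lambda_\infty}$, and equality follows.

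The principal obstacle I anticipate is the comparison argument in Step~2. In the vectorial interface setting, testing against $W^+=(U_\lambda-U_{\lambda_\infty})^+$ produces a boundary integral on $\Gamma$ of the form $\mu\int_\Gamma(w_2^+-w_1^+)(w_2-w_1)$, whose sign has to be controlled so that, combined with the bulk coercive term, it forces $W^+\equiv 0$; this requires showing that the membrane contribution respects truncation, which is the technical heart of the argument. A secondary subtlety, transferring the boundary blow-up detected in Theorem~\ref{Leminft} from inside $\Omega_0^{a_i}$ to the outside, is handled gratis by continuity of each component $u_{i,\lambda}$ across $\partial\Omega_0^{a_i}$. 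Once these points are settled, the remaining pieces are routine monotone-convergence and elliptic-regularity arguments relying on results already established in the paper.
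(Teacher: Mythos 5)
Your proposal is essentially correct and reaches the same conclusion, but the key step --- the uniform local bound away from $\overline\Omega_0^{a_1}\cup\overline\Omega_0^{a_2}$ --- is obtained by a genuinely different barrier. You compare $U_\lambda$ directly with the \emph{global} minimal boundary blow-up solution $U_{\lambda_\infty}$ of~\eqref{minL}--\eqref{minL_bdy2}, observing that it is a supersolution at every parameter $\lambda<\lambda_\infty$; the paper instead localises: it fixes an $\epsilon$-neighbourhood $\Omega_{i,\epsilon}$ of the complement of $\overline\Omega_0^{a_i}$, replaces $a_i$ by a constant lower bound $c_\epsilon$, and dominates $U_n$ by the large solution $Z$ (infinite data on $\partial\Omega_{i,\epsilon}$) of that auxiliary problem, importing existence and interior boundedness of $Z$ from~\cite{DuHuang}. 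Your route is arguably cleaner because the same barrier that gives the bound also gives the inequality $U_\star\le U_{\lambda_\infty}$ used in the identification step, whereas the paper gets minimality by a separate comparison with an arbitrary solution $\widehat U_{\lambda_\infty}$; the paper's route is more robust because the constant-coefficient local problem is exactly the setting of~\cite{DuHuang} and avoids handling the interface condition together with infinite boundary data in a single comparison. The remaining steps (monotonicity from Theorem~\ref{Th2.0} and Remark~\ref{rem:increasing.for.a.0}, elliptic regularity plus Dini/Arzel\`a--Ascoli to pass to the limit in the equation, and transferring the blow-up on $\partial\Omega_0^{a_i}$ from Theorem~\ref{Leminft} to the exterior limit) coincide with the paper's.

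One caveat on the step you yourself flag as the technical heart: the comparison $U_\lambda\le U_{\lambda_\infty}$ cannot be closed by testing $W=U_\lambda-U_{\lambda_\infty}$ against $W^+$ alone. The membrane term $\mu\int_\Gamma(w_2-w_1)(w_2^+-w_1^+)$ does have the right sign, but the bulk identity leaves the positive term $\lambda\int m_i (w_i^+)^2$ on the wrong side, and coercivity of $\|\nabla W^+\|^2$ does not absorb it for the relevant range of $\lambda$. You must exploit the logistic structure (monotonicity of $s\mapsto \lambda m_i-a_is^{p-1}$), e.g.\ via a Brezis--Oswald type test function or the sweeping/sub-supersolution comparison for sublinear problems as in~\cite{DuHuang}; on $\Omega_i\setminus\overline\Omega_0^{a_i}$ one has $a_i>0$, so such a comparison principle is indeed available, but it is a named ingredient rather than a routine energy estimate. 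With that tool supplied, your argument goes through.
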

\begin{proof}
Consider an increasing sequence $\{\lambda_n\}$
which converges to $\lambda_\infty$ as $n\to \infty$ and let  ${ U}_n:=U_{\lambda_n}$ be the corresponding unique positive solutions to~\eqref{eq:main.compact}.

First, we first show that the sequence $\{{U}_n\}$ is uniformly bounded on every compact subset of $\overline\Omega \setminus(\overline{\Omega}_0^{a_1}\cup \overline{\Omega}_0^{a_2})$. 
For a sufficiently small $\epsilon>0$ consider the $\epsilon$--neighbourhoods
$$\Omega_{i,\epsilon}:=\{x\in \Omega_i \setminus \overline{\Omega}_0^{a_i}\,;\,{\rm dist}(x,\overline{\Omega}_0^{a_i})>\epsilon\},$$
which are smooth and non-empty.

For a positive constant $c_\e$, such that $a_i(x)>c_\epsilon$ in $\overline{\Omega}_{i,\epsilon}$
we consider, for a fixed $n$, the problem
\be
\label{minL23}
\left\{ \begin{array}{ll}
 -\Delta u_i=\lambda_\infty m_i (x)u_i-c_\epsilon u_i^p  &   \hbox{in}\quad \Omega_{i,\epsilon},\\
\dfrac{\partial u_{1}}{\partial {\bf n_1}}=  \dfrac{\partial u_{2}}{\partial {\bf n_1}} =\mu( u_{2}-u_{1}) &  \hbox{on}\quad \Gamma,\\
u_i=0 & \hbox{on}\quad \Gamma_i.
\end{array}\right.
\ee
together with the boundary condition
\be
\label{minL23_bdy}
u_i=u_{i,n} \quad  \hbox{on}\quad \partial\Omega_{i,\epsilon}.\\
\ee

Thanks to Theorem~\ref{Theouni2}, Problem \eqref{minL23}--\eqref{minL23_bdy} possesses a classical solution which is actually unique. Moreover, by construction, ${ U}_n$ is a subsolution for problem \eqref{minL23}--\eqref{minL23_bdy}.

Next, we look for a supersolution to~\eqref{minL23}--\eqref{minL23_bdy}. To do so, consider $Z$, the  unique solution  to the auxiliary problem~\eqref{minL23}, see~\cite{DuHuang}, and replace the boundary condition~\eqref{minL23_bdy} by
$$
z_i=\infty \quad \hbox{on}\quad\partial\Omega_{i,\epsilon}.
$$
It is straightforward to see that $Z$ is a subsolution to~\eqref{minL23}--\eqref{minL23_bdy}.
Therefore,  due to the comparison principle
$$z_i\geq u_{i,n} \quad \hbox{in} \quad \Omega_{i,\epsilon}.$$
Moreover, since $Z$ is bounded in $\overline{\Omega}_{i,2\epsilon}$, see~\cite{DuHuang}, we have that there exists a positive constant such that  $u_{i,n}\leq C$ in $\overline{\Omega}_{i,2\epsilon}$, for every $n\geq 1$.
This implies, since $\epsilon>0$ is arbitrary, that $U_n$ is in fact uniformly bounded on compact sets of $\overline\Omega \setminus(\overline{\Omega}_0^{a_1}\cup \overline{\Omega}_0^{a_2})$.

Now, since $U_n$ are uniformly bounded and monotone (see Theorem~\ref{Theouni} and Remark~\ref{rem.lambda*2}) we have that $U_n$ converges to a limit function $U_{\lambda_\infty}$ in $\overline\Omega \setminus(\overline{\Omega}_0^{a_1}\cup \overline{\Omega}_0^{a_2})$. Furthermore, by regularity we can pass to the limit in problem~\eqref{main} and~\eqref{opbound} to get that $U_{\lambda_\infty}$ actually verifies~\eqref{minL}--\eqref{minL_bdy}. So, we only have to verify that the limit function $U_{\lambda_\infty}$ verifies~\eqref{minL_bdy2}.

Indeed, since $U_n$ increases to $U_{\lambda_\infty}$ as $n\to \infty$, we have that $U_{\lambda_\infty}>U_{k}$, for any $k\geq 1$. Now suppose that
$$\lim_{{\rm dist}(x,\partial\Omega_0^{a_i})\to 0} u_{i,\lambda_\infty} =\infty,\quad \hbox{uniformly for}\quad x\in \overline{\Omega}_i\setminus \Omega_0^{a_i},$$
is not true. Then, there exists a sequence $x_n\in \overline{\Omega}_i\setminus \Omega_0^{a_i}$ such that $u_{i,\lambda_\infty}(x_n)\leq C$ for any $n\geq 1$ and some constant $C>0$. So that we have
$u_{i,k}(x_n)\leq C$ for all $n\geq 1$ and $k\geq 1$. On the other hand, owing to Theorem~\ref{Leminft} we also know that $u_{i,k}(x_\infty)\to \infty$ as $k\to \infty$, uniformly for any $n\geq 1$, $x_\infty\in \partial \Omega_0^{a_i}$. Thus, there exists $k_0$ sufficiently large such that $u_{i,k_0}(x_\infty) \geq 3C$ for all $n\geq 1$. Since $u_{i,k_0}$ is uniformly continuous we deduce that
$|u_{i,k_0}(x_n)-u_{i,k_0}(x_\infty)|\to 0$. In other words,
 $$u_{i,k_0}(x_n)\geq u_{i,k_0}(x_\infty)-C\geq 3C,$$
 which is a contradiction since we were assuming that $u_{i,k}(x_n)\leq C$ for all $n\geq 1$ and $k\geq 1$.

Finally, to see that $U_{\lambda_\infty}$ is actually the minimal positive solution of \eqref{minL} we choose any solution $\widehat U_{\lambda_\infty}$ of \eqref{minL} and by
comparison,  $U_n < \widehat U_{\lambda_\infty}$ in $ \overline\Omega\setminus(\overline{\Omega}_0^{a_1}\cup \overline{\Omega}_0^{a_2}).$
Thus, letting $n\to \infty$ we deduce that $U_{\lambda_\infty} <\widehat U_{\lambda_\infty}$,
and, hence, $U_{\lambda_\infty}$ is the minimal positive solution of~\eqref{minL}.
\end{proof}

Finally we consider the non-symmetric case in which  the limit behaviour is given by $\lambda_\infty=\lambda^{m_1}[-\Delta,\Omega_0^{a_1}]<\lambda^{m_2}[-\Delta,\Omega_0^{a_2}]$. To do so, we consider again problem~\eqref{minL}--\eqref{minL_bdy}, but now coupled with the boundary condition
\begin{equation}
  \label{minL:bdy3}
  u_1=\infty\quad  \hbox{on}\ \partial \Omega_0^{a_1},\quad\hbox{and}\quad
   u_2=0\quad  \hbox{on}\ \partial \Omega_0^{a_2}.
\end{equation}

\begin{theorem}
\label{Th:exist_case1}
For any fixed $\lambda\in (\lambda^*,\lambda_\infty)$ let ${U}_\lambda$ be the unique positive solution of~\eqref{eq:main.compact}.
If $\lambda_\infty=\lambda^{m_1}[-\Delta,\Omega_0^{a_1}]<\lambda^{m_2}[-\Delta,\Omega_0^{a_2}]$, then
$U_{\lambda} \to {U}_{\lambda_\infty}$ uniformly on compact subsets of $\overline\Omega\setminus(\overline{\Omega}_0^{a_1}\cup \overline{\Omega}_0^{a_2})$,
as $\lambda\to \lambda_\infty$, where $U_{\lambda_\infty}$ is the minimal positive solution of~\eqref{minL}--\eqref{minL_bdy} and~\eqref{minL:bdy3}.
\end{theorem}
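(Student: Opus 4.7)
The plan is to mimic the scheme of Theorem~\ref{lamlinTh}, but exploiting the asymmetry: in the present regime Theorem~\ref{Leminft} forces the $u_{2}$-component to collapse uniformly on $\overline\Omega_2$, so only the $u_{1}$-component needs the boundary blow-up analysis. First I would fix an increasing sequence $\{\lambda_n\}\subset(\lambda^\ast,\lambda_\infty)$ with $\lambda_n\uparrow\lambda_\infty$, denote $U_n:=U_{\lambda_n}=(u_{1,n},u_{2,n})^T$, and observe that by Theorem~\ref{Th2.0} (extended as noted in Remark~\ref{rem:increasing.for.a.0}) the sequence is monotone increasing in $\lambda$. By Theorem~\ref{Leminft}, in this non-symmetric case $u_{2,n}\to 0$ uniformly in $\overline\Omega_2$, so the $u_2$-component is already uniformly bounded on every compact subset of $\overline\Omega_2$ without further work.

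Next I would establish uniform bounds for $u_{1,n}$ on compact subsets of $\overline\Omega_1\setminus\overline\Omega_0^{a_1}$ by adapting the comparison argument used in Theorem~\ref{lamlinTh}: for small $\e>0$ introduce the $\e$-neighbourhood $\Omega_{1,\e}:=\{x\in\Omega_1\setminus\overline\Omega_0^{a_1}\,:\,\mathrm{dist}(x,\overline\Omega_0^{a_1})>\e\}$, choose $c_\e>0$ with $a_1(x)\geq c_\e$ on $\overline\Omega_{1,\e}$, and compare $u_{1,n}$ against the boundary blow-up solution of
\[
-\Delta z=\lambda_\infty m_1(x)z-c_\e z^p\quad\text{in }\Omega_{1,\e},\qquad z=\infty\ \text{on }\partial\Omega_{1,\e},
\]
(supplemented by a Robin-type condition on the trace of $\Gamma$ compatible with $u_{2,n}\to 0$); existence of such explosive solutions and $L^\infty$-bounds on compact subsets of $\overline\Omega_{1,2\e}$ come from the theory of~\cite{DuHuang} together with Lemma~\ref{lemma:minimal.solution.aux}. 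Since $u_{1,n}$ is a subsolution for this auxiliary problem, comparison yields a bound independent of $n$ on every compact subset of $\overline\Omega\setminus(\overline\Omega_0^{a_1}\cup\overline\Omega_0^{a_2})$. Combined with monotonicity, this produces a pointwise limit $U_{\lambda_\infty}$ on that open set.

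Third, I would upgrade convergence and identify the limit. Standard interior $L^p$ and Schauder estimates, applied on slightly larger compact subsets, lift $U_n\to U_{\lambda_\infty}$ to $\mathcal{C}^{2,\eta}_{\mathrm{loc}}$, so one may pass to the limit in each equation of~\eqref{main} and in the interface condition~\eqref{opbound}. Because $u_{2,n}\to 0$ uniformly on $\overline\Omega_2$, elliptic regularity up to $\Gamma$ gives $\partial u_{2,n}/\partial\mathbf{n_1}\to 0$ on $\Gamma$, so the interface relation $\partial u_{1,n}/\partial\mathbf{n_1}=\mu(u_{2,n}-u_{1,n})$ passes to the limit and matches $\partial u_{2,\lambda_\infty}/\partial\mathbf{n_1}=0=\mu(0-u_{1,\lambda_\infty})$, forcing $u_{1,\lambda_\infty}=0$ on $\Gamma$; consequently the limit pair $U_{\lambda_\infty}=(u_{1,\lambda_\infty},0)^T$ solves~\eqref{minL}--\eqref{minL_bdy}. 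The boundary condition $u_2=0$ on $\partial\Omega_0^{a_2}$ in~\eqref{minL:bdy3} is then automatic, while the blow-up $u_{1,\lambda_\infty}=\infty$ on $\partial\Omega_0^{a_1}$ is obtained by repeating verbatim the first bullet of Theorem~\ref{Leminft}: the interior blow-up plus monotonicity imply $\min_{\partial\Omega_0^{a_1}}u_{1,n}\to\infty$, otherwise the Hopf-barrier/normal-derivative contradiction developed there would apply.

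Finally, minimality follows from the usual comparison argument: for any other non-negative solution $\widehat U_{\lambda_\infty}$ of~\eqref{minL}--\eqref{minL_bdy} and~\eqref{minL:bdy3}, the sequence $U_n$ serves as a subsolution on $\overline\Omega\setminus(\overline\Omega_0^{a_1}\cup\overline\Omega_0^{a_2})$ since $\widehat U_{\lambda_\infty}$ is infinite on $\partial\Omega_0^{a_1}$ and dominates $U_n$ on the remaining parts of the boundary; letting $n\to\infty$ gives $U_{\lambda_\infty}\leq\widehat U_{\lambda_\infty}$. The main obstacle I anticipate is the third step, namely justifying the passage to the limit in the interface condition on $\Gamma$ together with showing that the limit $u_{1,\lambda_\infty}$ genuinely attains the Robin-type trace induced by the vanishing of $u_{2,n}$; this requires uniform $\mathcal{C}^{1,\eta}$ bounds for $u_{2,n}$ up to $\Gamma$, which can be obtained through a localised $L^\infty$-to-$W^{2,p}$ bootstrap applied to the uniformly decaying sequence $u_{2,n}$ inside $\Omega_2$.
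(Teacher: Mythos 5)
Your overall scheme---monotone increasing sequence $U_n=U_{\lambda_n}$, uniform bounds on compact subsets away from the refuges by comparison with the explosive solutions of~\cite{DuHuang}, passage to the limit via interior elliptic estimates, boundary blow-up on $\partial\Omega_0^{a_1}$ through the barrier argument of Theorem~\ref{Leminft}, and minimality by comparison with an arbitrary solution---is exactly the route the paper takes (its proof simply refers back to Theorem~\ref{lamlinTh} and adds one remark). The step that does not survive scrutiny is your identification of the limit on the interface $\Gamma$. From ``$u_{2,n}\to 0$ uniformly in $\overline\Omega_2$'' plus uniform $\mathcal{C}^{1,\eta}$ bounds up to $\Gamma$ you deduce $\partial u_{2,\lambda_\infty}/\partial\mathbf{n_1}=0$ on $\Gamma$ and hence, via the transmission condition, $u_{1,\lambda_\infty}=0$ on $\Gamma$. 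But the same transmission condition also gives $\partial u_{1,\lambda_\infty}/\partial\mathbf{n_1}=\partial u_{2,\lambda_\infty}/\partial\mathbf{n_1}=0$, so $u_{1,\lambda_\infty}$ would carry vanishing Cauchy data on $\Gamma$ while satisfying $-\Delta u=\lambda_\infty m_1 u-a_1u^p$ in a neighbourhood of $\Gamma$; Hopf's lemma (a nonnegative, nontrivial solution vanishing at a boundary point has strictly negative outward normal derivative there) then forces $u_{1,\lambda_\infty}\equiv 0$ near $\Gamma$, which is incompatible with a positive solution blowing up on $\partial\Omega_0^{a_1}$. It is also not what the theorem asserts: the limit problem retains the full interface condition~\eqref{minL_bdy} and imposes $u_2=0$ only on $\partial\Omega_0^{a_2}$ in~\eqref{minL:bdy3}, not a Dirichlet condition $u_1=0$ on $\Gamma$.

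The root of the problem is an over-reading of the decay of $u_{2,n}$: that decay is produced by comparison with the eigenfunctions $\varphi_{2,n}$, which concentrate and vanish in the refuge $\Omega_0^{a_2}$, and it does not give $\mathcal{C}^1$-smallness of $u_{2,n}$ near $\Gamma$, where the coupling with the bounded, strictly positive $u_{1,n}$ keeps the flux $\mu(u_{2,n}-u_{1,n})$ away from zero. The paper's (one-line) proof instead keeps the transmission condition unchanged in the limit and observes that the $u_2$-component converges to the solution of the subcritical logistic problem $-\Delta u_2=\lambda^{m_1}[-\Delta,\Omega_0^{a_1}]\,m_2(x)u_2-a_2(x)u_2^p$ in $\Omega_2\setminus\overline\Omega_0^{a_2}$ with $u_2=0$ on $\partial\Omega_0^{a_2}$, which is uniquely solvable precisely because $\lambda^{m_1}[-\Delta,\Omega_0^{a_1}]<\lambda^{m_2}[-\Delta,\Omega_0^{a_2}]$, so no blow-up occurs on the $u_2$ side. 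To repair your argument, drop the claim $u_{1,\lambda_\infty}=0$ on $\Gamma$, pass the interface condition to the limit as it stands (your uniform $\mathcal{C}^{1,\eta}$ bounds near $\Gamma$ suffice for this), and obtain $u_{2,\lambda_\infty}=0$ only on $\partial\Omega_0^{a_2}$ from the vanishing of $u_{2,n}$ inside the refuge. The remaining steps of your proposal (uniform bounds, blow-up on $\partial\Omega_0^{a_1}$, minimality) are in line with the paper.
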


\begin{proof}
The proof follows the same idea as the one performed to prove Theorem~\ref{lamlinTh} and we omit it. We only want to remark that, when passing to the limit we get for $u_{2,\infty}$
$$
-\Delta u_2=\lambda^{m_1}[-\Delta,\Omega_0^{a_1}] m_2(x) u_2-a_2(x) u_2^p \quad   \hbox{in}\quad \Omega_2\setminus\overline\Omega_0^{a_2}.
$$
This problem has a unique positive solution, since $\lambda^{m_1}[-\Delta,\Omega_0^{a_1}]<\lambda^{m_2}[-\Delta,\Omega_0^{a_1}]$, see~\cite{DuHuang}.
\end{proof}

\section{Conclusions and further work}
\label{sect:conclusions}

In this work, we characterise the solutions of a steady-state model of population migration through a membrane in terms of the  intrinsic growth rate of the populations when crowding effects for those populations are considered. Our strategy relies on the previous analysis of related linear problems, some of them studied in~\cite{Suarezetal} and~\cite{WangSu} and applying afterward the sub and supersolutions technique,~\cite{AC-LG2}, and the results in~\cite{CR},~\cite{DuHuang} and~\cite{R1} to fully characterise the behaviour of positive solutions to the system.

A very interesting direction both from the biological
and mathematical point of view, could be coupling the system with a third equation, representing a population that inhabits everywhere and acts as either
prey or predator of the other two species. Moreover, we could also consider, following~\cite{Suarezetal}, different permeability conditions
on the membrane, not only from one side to the other side of the domain, i.e. non symmetric conditions, but also a permeability that depends on the region of crossing from one side to the other. We think that this would be a realistic approach to  model, for instance, problems assuming
 geographical barriers with different types of permeability terms.

Finally, our interest in analysing the existence of such stationary solutions comes from
the fact that this analysis is imperative as the first necessary step towards ascertaining
the dynamics of the associated parabolic problem. Although such a dynamical analysis has not been carried out in this work we plan to perform it shortly.

\vspace{0.4cm}

\noindent{\bf Acknowledgements}.
The authors would like to express their deepest gratitude to the reviewers of this work for all the invaluable suggestions and corrections that have significantly 
improved the  final outcome of this work.



\end{document}